\numberwithin{equation}{section}
\let\OLDthebibliography\thebibliography
\renewcommand\thebibliography[1]{
	\OLDthebibliography{#1}
	\setlength{\parskip}{0pt}
	\setlength{\itemsep}{3pt}
}
\definecolor{myblue}{HTML}{26649b}
\definecolor{mygreen}{HTML}{46C755}
\definecolor{myorange}{HTML}{FF8C00}
\numberwithin{equation}{section}
\theoremstyle{plain}
\newtheorem{Th}{Theorem}[section]
\newtheorem{Lemma}[Th]{Lemma}
\newtheorem{Cor}[Th]{Corollary}
\newtheorem{Prop}[Th]{Proposition}
\DeclareMathOperator{\R}{\mathbb{R}}
\DeclareMathOperator{\Z}{\mathbb{Z}}
\DeclareMathOperator{\C}{\mathbb{C}}
\DeclareMathOperator{\N}{\mathbb{N}}
\DeclareMathOperator{\cN}{\mathcal{N}}
\DeclareMathOperator{\pc}{\text{pc}}
\newcommand{\f}[2]{\frac{#1}{#2}}
\newcommand{\el}{\ell}
\newcommand{\sphere}{\mathbb{S}}
\newcommand{\conv}{\ast}
\def \re{\mathrm{Re}}
\def \imm{\mathrm{Im}}
\def \d{\mathrm{d}}
\def \dt{\mathrm{d}t}
\def \G{\mathcal{G}}
\def \H{\mathcal{H}}
\def \J{\mathcal{J}}
\theoremstyle{definition}
\newtheorem{Def}[Th]{Definition}
\newtheorem{Rem}[Th]{Remark}
\newtheorem{?}[Th]{Problem}
\newcommand{\abs}[1]{\left \lvert #1 \right \rvert}
\setlist[enumerate]{leftmargin=10mm}
\renewcommand{\thefootnote}{\fnsymbol{footnote}}
\title{Multisoliton solutions and blow up for the\\ $L^2$-critical Hartree equation }
\author[1]{Jaime G\'omez}
            \author[2]{Tobias Schmid}
            \author[3]{Yutong Wu}
\affil[1]{EPFL SB MATH PDE, Bâtiment MA, Station 8, CH-1015 Lausanne, \linebreak Switzerland\; \href{mailto:jaime.gomezramirez@epfl.ch}{jaime.gomezramirez@epfl.ch}\;\\ \vspace{.2cm} }
\affil[2]{University of Vienna, Faculty of Mathematics, Oskar-Morgenstern-Platz 1, A-1090 Vienna, \linebreak Austria\; \href{mailto:tobias.schmid@univie.ac.at}{tobias.schmid@univie.ac.at}\;\\ \vspace{.2cm} }
\affil[3]{Department of Mathematics, Yale University, New Haven, CT 06511, USA \linebreak
					 \href{mailto:yutong.wu.yw894@yale.edu}{yutong.wu.yw894@yale.edu}}
\date{}                     
\begin{document}
    
	\begin{titlepage}

\maketitle
\thispagestyle{empty}

\;\\
\;\\
\renewcommand{\abstractname}{Abstract}
\begin{abstract}
\noindent
We construct multisoliton solutions for the $L^2$-critical Hartree equation with trajectories asymptotically obeying a many-body law for an inverse square potential. Precisely, we  consider  the  $m$-body hyperbolic and parabolic non-trapped dynamics.  The  pseudo-conformal symmetry then implies finite-time collision blow-up in the latter case and a solution blowing up at $m$ distinct points in the former case. The approach we take is based on the ideas of Krieger-Martel-Rapha\"el \cite{KRM} and the third author's recent extension \cite{Wu}. The approximation scheme requires new aspects in order to deal with a certain degeneracy for generalized root space elements. 
\end{abstract}
 \;\\   
 \;\\
 \;\\
 \renewcommand{\abstractname}{Acknowledgements}
\begin{abstract}
\noindent
\hspace{.8cm}We would like to thank Joachim Krieger for helpful comments on the article.
\end{abstract}
\;\\
\renewcommand{\abstractname}{Statements and Declarations}
\begin{abstract}
\noindent
 The authors have no competing interests to declare that are relevant to the content of this article.
\end{abstract}
\renewcommand{\abstractname}{Data Availability Statement}
\begin{abstract}
\noindent
Data sharing is not applicable to this article as no datasets were
generated or analyzed during the current study. 
\end{abstract}

\renewcommand{\abstractname}{Abstract}

\end{titlepage}
\renewcommand{\thefootnote}{\arabic{footnote}}

\section{Introduction}

\subsection{Setting of the problem} We consider the nonlinear Hartree equation in $ d = 4$ dimensions, i.e. the following cubic Schr\"odinger equation with Hartree nonlinearity:
	\begin{align}\label{H} 
		\begin{cases}
			\;\; i \partial_t u + \Delta u - \phi_{|u|^2} u = 0,\; \;\; &\\[2pt]
			\;\; u(0, x) = u_0(x),\; u_0 : \R^{4} \to \C,\;\; (t,x) \in [0,T)\times\R^{ 4}, &
		\end{cases}
	\end{align}
	where the function $\phi_{|u|^2}$ is given by 
	\begin{align} \label{def-conv}
		\phi_{|u|^2} = - \frac{1}{|x|^2} \ast |u|^2,\;\; x \in \R^4,
	\end{align}
	and  $\ast$ denotes the usual convolution. Note that for $ \Phi = (4\pi^2)^{-1}\phi_{|u|^2}$ we have $ \Delta  \Phi = |u|^2 $ and we use \eqref{def-conv} in \eqref{H} for convenience.
    It is well known, see \cite{Caz,Ginibre-Velo}, that \eqref{H} is locally well-posed in $ H^1(\R^4)$.  In particular \eqref{H} is a Hamiltonian equation for which $H^1$-solutions conserve \emph{energy, mass} and \emph{momentum}
	\begin{gather}
		\text{\emph{Hamiltonian\;energy}}:\;\;\hspace{0cm}\mathcal{E}(u(t)) = \f12 \int |\nabla u(t)|^2 \;\d x - \frac{1}{16 \pi^2 } \int | \nabla \phi_{|u(t)|^2}|^2 \; \d x  = \mathcal{E}(u_0),\\[3pt]
		\text{\emph{$L^2$-norm (mass)}}:\;\; \hspace{0cm}\int |u(t)|^2 \;\d x = \int |u_0|^2\;\d x,\\[3pt]
		\text{\emph{Momentum}}:\;\; \hspace{0cm}\text{Im}\bigg( \int  \nabla u(t) \cdot \overline{u(t)}\;\d x \bigg) =  \text{Im}\bigg( \int \nabla u_0 \cdot \overline{u_0}\;\d x \bigg),
	\end{gather}
	and further shares many symmetries with the local NLS counterpart. If $ u(t,x) $ is a solution of \eqref{H} and $ \lambda \in \R_+, t_0 \in \R, \alpha, \beta \in \R^4, \gamma \in \R$, then $u_{\lambda, t_0, \alpha, \beta, \gamma}(t,x)$ defined via 
	\begin{align} \label{inv}
		u(t,x) \mapsto \lambda^2 u( \lambda^2 t + t_0, \lambda x  - \alpha - \beta t) e^{i(\f12 \beta \cdot x - \f14 |\beta|^2 t + \gamma)}
	\end{align}
	is also a solution of \eqref{H} given the appropriate transformation of initial data.\\[12pt]
	\emph{Background}. The 3D version of \eqref{H} was derived as a mean-field limit of many body quantum systems with great implications ever since. We refer for instance to \cite{Fr-Lenz} and the references therein. In fact nonlinear Schr\"odinger-type equations of the form
	\begin{align} \label{this-H}
		i \partial_t u + \Delta u = -  c_d \big( |x|^{2-d} \ast |u|^2\big)u, \;\;\; \text{in}\;  [0, T)\times \R^d,\; c_d > 0
	\end{align}
	arise rather naturally as effective descriptions related to e.g. Bose gases, cf. \cite{Fr-Lenz}. In comparison to the local NLS, the  convolution term on the right of \eqref{this-H} is essential to \emph{capture long-range effects} and necessary to accurately reproduce interactions of a quantum system with  e.g.  the Coulomb potential $ |x|^{-1}$ in $ d = 3$. Similar versions of \eqref{this-H} with relativistic (nonlocal) dispersion arise in connection to mean-field approximations of Boson stars, see \cite{El-Schlein}.\\[8pt]
	\emph{Well-posedness and blow-up}.  The local solution $u \in C^0([0,T), H^1(\R^4))$  of  \eqref{H} with $u_0 \in H^1(\R^4)$ is inextensible beyond $ T > 0$  if and only if $ \lim_{t \to T^-} \| \nabla u \|_{L_x^2}^2 = \infty$,  i.e. there is blow-up at $t = T$.  In fact, the existence of finite-time blow-up is immediate using Glassey's implicit argument \cite{Glassey}. 
    The solutions of \eqref{H} with finite variance, i.e. such that $u_0 \in \Sigma$ with
	$
	\Sigma = H^1 \cap L^2(|x|^2 \d x)
	$
	blow up in finite time due to the \emph{virial identity}
	\begin{align*}
		&\frac{\d^2}{\d t^2} \int |x|^2|u(t,x)|^2\;\d x	 = 16 \mathcal{E}(u_0)
	\end{align*}
	if $ \mathcal{E}(u_0)< 0$. The latter implies $ \| u_0 \|_{L^2_x}$ to be rather large. In fact, similarly to the $L^2$-critical local NLS, we observe threshold behavior through the \emph{ground state mass}.\\[3pt]
	Equation \eqref{H} has solutions $u_{\omega}(t,x)= e^{i \omega^2 t} Q_{\omega}(x)$, where $ Q_{\omega} = \omega^2 Q_1(\omega \cdot) $ and $Q := Q_1$ is the unique $H^1$-solution of 
	\begin{align}
		\Delta Q   - \phi_{|Q|^2} Q =  Q,\;\; Q(x) > 0,\; x \in \R^4.
	\end{align}
	Here, existence follows along the variational argument of Weinstein \cite{Weinstein} and uniqueness was proved by E. Lieb \cite{Lieb} (in $d=3$), in fact see \cite[Section 4]{KLR}   for the case $ d=4$. Note that this family of solutions (up to symmetry) is not orbitally stable and decays exponentially, i.e.
	\[
	Q(x) \leq C e^{- c |x|},\;\; x \in \R^4.
	\] 
	By using Gagliardo-Nirenberg interpolation combined with the Hardy-Littlewood-Sobolev inequality, we observe the bound (see e.g. \cite{class-min})
	\[
	\mathcal{E}(u) \geq \f12 \| \nabla u \|_{L^2_x}^2 \bigg(1 - \frac{\| u \|^2_{L^2_x}}{\| Q\|^2_{L^2_x}}\bigg),\;\; u \in H^1(\R^4),
	\]
	which implies global solutions for \eqref{H}  if $ u_0 \in H^1(\R^4) $ and $ \| u_0 \|_{L^2_x} < \| Q \|_{L^2_x}$. The latter is sharp due to \emph{pseudo-conformal symmetry}, i.e. if $u(t,x) $ solves \eqref{H}, then so does 
	\begin{align}\label{pseudo}
		u(t,x) \mapsto \frac{1}{t^2} \overline{u}(\frac{1}{t}, \frac{x}{t})e^{i \frac{|x|^2}{4t}}.
	\end{align}
	In particular, applying the symmetry to $e^{it}Q(x)$ and using time reflection $ u(t) \mapsto \overline{u}(t_0 -t)$ for \eqref{H},  we obtain \emph{minimal mass blow-up solutions} 
	\begin{align}
		&S(t,x) = \frac{1}{t^2} Q(\frac{x}{t}) e^{i( -  \frac{1}{t} +  \frac{|x|^2}{4t})},\;\; \| S(t) \|_{L^2_x} = \| Q\|_{L^2_x},\\[4pt]\nonumber
		&\|\nabla S(t) \|_{L^2_x} \sim |t|^{-1},\; t \to 0^-.
	\end{align}
  \;\\
	We note that \cite[Theorem~1.2]{class-min} provides a classification of such solutions, which is similar to \cite{M-dete} for the NLS. 
    For radial data,  Krieger-Lenzmann-Rapha\"el \cite{KLR} proved the finite-codimension stability of $S(t)$. Further the existence of blow-up at the rate  $\| \nabla u(t) \|_{L^2_x} \sim |t|^{-1}$ was proved for certain inhomogeneous perturbations \footnote{analogue to asking for minimal mass blow-up for \emph{inhomogeneous} NLS, see \cite{Ra-Sze},\cite{Merle-none}.} of the Newton potential $ |x|^{-2}$. Moreover,  formal and numerical calculations in \cite{YRZ-Hartree} show comparable blow-up mechanisms as for the $L^2$-critical NLS.\\[12pt]
    \emph{\textbf{In this article}}, we construct \emph{multisoliton  solutions of \eqref{H} with modulation trajectories following non-trapped solutions of an $N$-body problem}, which is derived from the Newton potential $|x|^{-2}$ in $d=4$ dimensions.\\[3pt]
	This method was pioneered for the 3D Hartree-NLS by Krieger-Martel-Rapha\"el \cite{KRM} for the construction of two-soliton solutions along asymptotically parabolic and hyperbolic orbits.  This approach was recently extended by the third author \cite{Wu} to 3D many-body solutions with (purely) parabolic, hyperbolic and hyperbolic-parabolic (cluster) cases.\\[4pt]
	\emph{Further, by applying the pseudo-conformal transformation to our solutions, we obtain}
    \begin{itemize}  \setlength\itemsep{.2cm}
        \item[$\bullet$]\emph{Finite-time collision blow-up} at a single point with the rate $ \|  \nabla u(t) \|_{L^2_x} \sim |t|^{-1}$ as $ t \to 0^-$ (for asymptotically parabolic orbits) and
		\item[$\bullet$]\emph{Finite-time blow-up at arbitrary $ m \geq 2$ distinct points} with  $ \|  \nabla u(t) \|_{L^2_x}  \sim |t|^{-1}$ as $ t \to 0^-$ (for asymptotically hyperbolic orbits).
    \end{itemize}
    \;\\[-.5cm]
    The results of this article are stated in Section \ref{sec:result}.
     \emph{We stress the result is technically different form the 3D case} in \cite{Wu} (see also \cite{KRM}) due to the pseudo-conformal invariance for the $L^2$-critical equation \eqref{H} and the related extended class of generalized root space elements. In particular by the required orthogonality in the estimate Lemma \ref{lem coercivity} and the vanishing $(Q, \Lambda Q) = 0$, where  $\Lambda Q = (\partial_\omega Q_{\omega})_{|_{\omega =1}}$, this includes the need for two extra modulation parameters, see Section \ref{sec:gs-op-properties} and Section \ref{sec:approx}. Further we need extra virial terms for the energy control in Section \ref{sec:mod}. For more details we refer to the explanations in Section \ref{sec:approx}, Section \ref{sec:bootstrap} and Section \ref{sec:mod-subsec2}.\\[14pt]
     Let us first explain some related aspects of the $L^2$-critical NLS \eqref{mass-NLS} and the terminology of the Kepler-type $N$-body law we encounter below.\\[8pt] 
	\emph{$L^2$-critical NLS}: The above results are reminiscent of the structure of the local NLS 
	\begin{align} \label{mass-NLS}
		i \partial_t u + \Delta u  = - |u|^{\frac{4}{d}}u, \;\;\text{in}\; \R^{1+d}.\;\; \
	\end{align}
	In comparison to \eqref{H}, however, the $L^2$-critical NLS \eqref{mass-NLS} is much better understood in terms of blow-up in $H^1$ and solitary dynamics, as well as their respective stability.\\[6pt] First considering the dynamics of a \emph{single bubble}, 
	Bourgain-Wang \cite{BW} constructed blow-up solutions (at pseudo-conformal rate) in the super-threshold regime, i.e. 
	\begin{align} \label{super-threshold}\|Q\|_{L^2_x} < \|u_0\|_{L^2_x} < \|Q\|_{L^2_x} + \epsilon 
	\end{align}
	where $ \epsilon \ll1 $, by certain weak perturbations of $S(t)$.   
	These were proved to be unstable, in fact \emph{finite-codimension stable}, see \cite{MRS} and also  \cite{Krieger-Schlag} for such solutions in $d=1$.\\[4pt]
	Further, in a series of works, \cite{Pere},\cite{Merle-Rapha2},\cite{Merle-Rapha3},\cite{Merle-Rapha4},\cite{Merle-Rapha5},\cite{raphael}, a \emph{stable blow-up regime} at the rate 
	\[ \| \nabla u(t) \|_{L^2_x} \sim \sqrt{\frac{\log(|\log(t)|)}{t}},\;\; t \to 0^+ \]
	has been identified and subsequently well understood (c.f. \cite{Merle-Rapha1}), where 
	$\mathcal{E}(u_0) < 0, \;u_0 \in \Sigma$ (and of course \eqref{super-threshold}). See also \cite{YRZ} for higher dimensions. Note that by the work of Rapha\"el \cite{raphael}, it is known that this rate is isolated in the sense that as $t \to 0^-$, \emph{all other finite-time blow-up solutions $u$ in $H^1$} with mass slightly above $\| Q\|_{L^2}^2$ must satisfy  the bound
	\[
	\| \nabla u(t) \|_{H^1} \gtrsim \frac{1}{|t|},\;\; t \to 0^-.
	\]
	\emph{Multi-bubble}. In \cite{Merle-k-bl} Merle proved the existence of \emph{multi-bubble solutions} ($d=2$) blowing up in finite time at arbitrary $K \in \Z_{\geq 2}$ distinct points with the rate $ \| \nabla u(t) \|_{L^2_x} \sim |t|^{-1}$. Strongly interacting multi-bubbles moving  along the vertices  of a regular  polygon have been constructed by Martel-Rapha\"el \cite{MR} ($d=2$).  In fact the bubbles contract into a collision blow-up with speed above the conformal rate $\|\nabla u(t) \|_{L^2_x} \sim |\frac{\log(|t|)}{t} | $ as $ t \to 0^-$. Note that pseudo-conformal symmetry transfers the analysis to  the asymptotic of the solutions as $ t \to \infty$ and vice versa.\\[8pt]
	\emph{Multisoliton solutions} contribute to the understanding of long time and blow-up behavior of dispersive equations. For the $L^2$-subcritical NLS we refer for instance to \cite{MM},  \cite{Koch-T}, \cite{RodSS}, \cite{Pere2} for construction, stability and collision analysis of such solutions. We further note there is a rich literature in the context of KdV and nonlinear Klein-Gordon equations.

	\subsection{\texorpdfstring{$m$}{m}-body dynamics} \label{sec:n-body-intro}

    Let $ m \in \Z_{\geq 2} $ and $ \alpha = (\alpha_1, \alpha_2, \dots, \alpha_m) \in \R^{ 4m} $ be the position configuration of $ m $ bodies with center $ \alpha_j \in \R^4$.   We define our \emph{$m$-body law} to be the Newtonian system 
	\begin{align} 
    \label{m-body}
		\begin{cases}
			\;\;\dot{\alpha}_j(t) =  2\beta_j(t),\; \;1 \leq  j \leq m,\;&\;\;\\[5pt]
			\;\;\displaystyle	\dot{\beta}_j(t) =  -  \| Q \|^2_{L^2_x} \sum_{ k \neq j }   \frac{\alpha_j - \alpha_k}{|\alpha_j - \alpha_k|^4}.&
		\end{cases}
	\end{align}
	The system \eqref{m-body} has a  first integral $H = K - U$, i.e.
	\begin{align}\label{energy}
	H(\alpha, \beta) =  2\sum_{j =1}^m |\beta_j(t)|^2 - \| Q \|^2_{L^2_x} \sum_{j < k} \frac{ 1}{|\alpha_j - \alpha_k|^2},
	\end{align} 
    with \emph{kinetic-} and \emph{potential energy} respectively.
    Similar to \cite{KRM,Wu}, we will use the trajectories $(\alpha(t), \beta(t))$ of \eqref{m-body} as a leading asymptotic of the modulation variables according to \eqref{inv} applied to the ground state $Q$.
	Clearly, the \emph{center of mass} 
    \begin{align}\label{center-of-mass}
    M(t) =\sum_{j=1}^m  \alpha_j(t)
    \end{align}
    evolves by free Galilean motion, i.e. $\ddot{M}(t) = 0$, and $\alpha(t) $ is confined to a hyperplane. In fact we shall define the \emph{central configurations} and the \emph{non-collision set} by
    \;\\
	\begin{align*}
		&\mathcal{X} : = \big \{ x = (x_1, x_2, \dots, x_m) \in \R^{4m}\;|\; { \textstyle   \sum_{j =1}^m  x_j = 0 } \big \},\;\; \mathcal{Y} : = \mathcal{X} \setminus \Delta,\\[3pt]
		& \Delta : = \big \{ x = (x_1, x_2, \dots, x_m) \in \mathcal{X}\;|\; \exists\; j \neq k: x_j = x_k \big \},
	\end{align*}
    \begin{Rem}
    We note that the assumption of vanishing center of mass is of course neither a restriction to \eqref{m-body}, since we simply need to add the linear motion of $M$ to each $\alpha_j(t)$, nor is it to the solutions of \eqref{H} in Theorem \ref{thm hyperbolic} and Theorem \ref{thm parabolic} due to \eqref{inv}.
    \end{Rem}
Let us also set  $\alpha_{j k} = \alpha_j - \alpha_k$ and the \emph{minimal distance} $a:= \min_{j < k} |\alpha_j- \alpha_k|$. We have the following along the arguments in \cite{Marchal-Saari}. 

\begin{Lemma}\label{prelim} Let $\alpha(t)$ be a global solution of \eqref{m-body}. Then $ \max_{j < k}|\alpha_{j k}(t)| = O(t)$ as $ t \to \infty$ and $ \liminf_{t \to \infty} a(t) > 0$. Furthermore, for some $ v \in \mathcal{X}$ we have $\alpha(t) = v t + O(t^{\f12})$ as $t \to \infty$.
The vector $ v = \lim_{t \to \infty} \frac{\alpha(t)}{t} \in \mathcal{X}$ is called the \emph{limit velocity} of $(\alpha, \beta)$.
\end{Lemma}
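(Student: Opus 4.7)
The approach is the Marchal--Saari strategy adapted to the degree $-2$--homogeneous potential in $d=4$.

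\textbf{Step 1 (Lagrange--Jacobi and the $O(t)$ bound).} I introduce the moment of inertia $I(t) := \sum_j|\alpha_j(t)|^2$. Since the center of mass vanishes, an elementary identity gives $\sum_{j<k}|\alpha_{jk}(t)|^2 = m\,I(t)$. A direct computation using $\dot\alpha_j = 2\beta_j$ together with Euler's identity for $U(\alpha) = \|Q\|_{L^2_x}^2\sum_{j<k}|\alpha_{jk}|^{-2}$ (which is homogeneous of degree $-2$, hence $\sum_j\alpha_j\cdot\nabla_{\alpha_j}U = -2U$) yields the Lagrange--Jacobi identity $\ddot I \equiv 4H$. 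Integrating, $I(t) = 2Ht^2 + \dot I(0)t + I(0)$, and global positivity of $I$ forces $H \geq 0$ (and $\dot I(0)\geq 0$ if $H=0$). Therefore $\max_{j<k}|\alpha_{jk}(t)|^2 \leq m\,I(t) = O(t^2)$, which is the first claim.

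\textbf{Step 2 (Lower bound on $a(t)$).} The principal technical heart is $\liminf_{t\to\infty}a(t) > 0$. I would argue by contradiction: if $|\alpha_{j_0 k_0}(t_n)| \to 0$ along $t_n\to\infty$, then $U(t_n)\to\infty$ and by conservation $K(t_n) = U(t_n) + H \to \infty$. Combined with the Sundman-type estimate $\dot I^2 \leq 8\,I\,K$ (Cauchy--Schwarz on $\dot I = 4\sum_j\alpha_j\cdot\beta_j$ and $K = 2\sum_j|\beta_j|^2$) and the explicit quadratic formula for $I(t)$ from Step 1, this traps $K$ in a time-averaged sense and contradicts unbounded growth along $t_n$. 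To extract a pointwise contradiction one localizes to the pair $\{j_0,k_0\}$ and invokes near-conservation of the pair's angular momentum: the remaining bodies contribute only an $O(t^{-3})$ perturbation, their mutual distances being $O(t)$ by Step 1. This is exactly the Marchal--Saari analysis.

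\textbf{Step 3 (Limit velocity with rate $O(t^{1/2})$).} Once $a(t) \gtrsim 1$, a further refinement of the Marchal--Saari alternatives yields $|\alpha_{jk}(t)| \gtrsim t^{1/2}$ for every pair: linear growth whenever the asymptotic velocities differ, and the worst case $t^{1/2}$ for a pair sharing a common limit velocity, reflecting the parabolic scaling of the $2$-body inverse-square problem. This gives the integrable decay
\begin{equation*}
|\dot\beta_j(t)| \leq \|Q\|_{L^2_x}^2\sum_{k\neq j}\frac{1}{|\alpha_{jk}(t)|^3} \lesssim t^{-3/2}.
\end{equation*}
Hence $\beta_j(t)$ converges to some $\beta_j^\infty$ with $|\beta_j(t) - \beta_j^\infty| = O(t^{-1/2})$. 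Setting $v_j := 2\beta_j^\infty$ and integrating $\dot\alpha_j = 2\beta_j = v_j + O(t^{-1/2})$ produces $\alpha_j(t) = v_j t + O(t^{1/2})$. Finally $\sum_j\alpha_j\equiv 0$ forces $\sum_j\beta_j\equiv 0$, so $\sum_j v_j = 0$ and $v \in \mathcal{X}$.

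The main obstacle is Step 2: unlike in the Newtonian $-1$--homogeneous case, where $\ddot I = 4H + 2U$ already provides a $U$-feedback strong enough to obstruct collapse, the degenerate identity $\ddot I \equiv 4H$ available here carries no such information. One is therefore forced to rely on Sundman inequalities together with approximate conservation of subsystem angular momenta, which is precisely the Marchal--Saari machinery cited in the excerpt.
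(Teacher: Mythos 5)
Your Step 1 is correct and is the right starting point: the Lagrange--Jacobi identity degenerates exactly to $\ddot I = 4H$ because $U$ is homogeneous of degree $-2$, so $I(t)$ is a quadratic polynomial in $t$; global positivity forces $H\geq 0$; and the identity $\sum_{j<k}|\alpha_{jk}|^2 = mI$ (valid since $\sum_j\alpha_j=0$) gives $\max_{j<k}|\alpha_{jk}(t)|=O(t)$. The two subsequent steps, however, each contain a genuine gap. In Step 2, the mechanism you describe does not yield a contradiction: the Sundman inequality $\dot I^2\leq 8IK$ bounds $K$ from \emph{below}, not above, so it says nothing against $K(t_n)\to\infty$; and because $\ddot I = 4H$ carries no $U$-term, there is no time-averaged control on $U$ either. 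The appeal to approximate conservation of the pair's angular momentum is the right idea, but it is not carried out, and it secretly assumes the remaining bodies stay far from the colliding pair, which requires separate justification.

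The more serious gap is Step 3. The claimed lower bound $|\alpha_{jk}(t)|\gtrsim t^{1/2}$ for every pair is false. A two-body circular orbit (take $|\alpha_{12}|\equiv r_0$ rotating in a fixed $2$-plane with $|\dot\alpha_{12}|^2r_0^2 = 4\|Q\|_{L^2_x}^2$) is a global solution of \eqref{m-body} with $H=0$ and $|\alpha_{12}|$ bounded; for $m\geq 3$ one can similarly have a tight oscillating cluster whose internal separations stay bounded. In such cases $\dot\beta_j$ is \emph{not} integrable, $\beta_j(t)$ does not converge, and so your construction $v_j := 2\beta_j^\infty$ collapses. Note that the Lemma nonetheless holds for these solutions: for the circular orbit $v=\lim\alpha(t)/t=0$ and $\alpha(t)=O(1)\subset O(t^{1/2})$, which is exactly why the paper defines $v$ via $\alpha(t)/t$ rather than via $\lim\beta(t)$. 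Your dichotomy "linear or $t^{1/2}$" omits the bounded (cluster) alternative, and it is also circular in that it invokes "asymptotic velocities" before their existence has been established. The correct route, following Marchal--Saari, is a cluster decomposition: group bodies whose separations stay $o(t)$, prove convergence for cluster \emph{centroid} velocities (where the inter-cluster force does decay like $t^{-3}$), and use the quadratic formula $\ddot I_{\mathrm{cluster}}=4H_{\mathrm{cluster}}+\mathrm{error}$ to bound the intra-cluster spread by $O(t^{1/2})$.
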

    \;\\
We now turn to global solutions for which \emph{all mutual distances grow} as $ t \to \infty$.\\[8pt]
\emph{Expansive orbits}. We call solutions $(\alpha(t), \beta(t))$  of \eqref{m-body} \emph{expansive} 
if they are \emph{global} and $ a(t) \to \infty$ for $ t \to \infty$. In fact, we introduce the following definitions according to the classification of trajectories in the classic $m$-body problem, see e.g. \cite{Wu}. 

\begin{Def} \label{def-traje-here} \phantom{a}
\begin{enumerate} \setlength\itemsep{4pt}
    \item If $v \in \mathcal{Y}$ we say $(\alpha(t), \beta(t))$ is \underline{\emph{hyperbolic}}, i.e. $ |\alpha_j(t) - \alpha_k(t)| \sim t$ as $ t \to \infty$ for all $ j \neq k$.
    \item If $v = 0$ we say $(\alpha(t), \beta(t))$ is \underline{\emph{parabolic}} in case $|\alpha_j(t) - \alpha_k(t)| \sim t^{\f12} $  for all $ j \neq k$.
    \item If $v \in \Delta \setminus \{0\}$ we say $(\alpha(t), \beta(t))$ is \underline{\emph{hyperbolic-parabolic}} in case the set of all $(j,k) $ with $j \neq k$ and  $ |\alpha_j(t) - \alpha_k(t)|\sim t^{\f12}$  is non-empty and otherwise the asymptotic is as in $(1)$. 
    \end{enumerate}
\end{Def}
   \;\\
 \noindent
The following proposition completely characterizes the asymptotic behavior of hyperbolic trajectories.
\begin{Prop}[hyperbolic solutions] \label{hyp} \phantom{a}
\begin{enumerate}
    \item Let $(\alpha(t), \beta(t))$ be a hyperbolic solution of \eqref{m-body}. Then there exist $v \in \mathcal{Y}$ and $x \in \mathcal{X}$ such that
    \begin{equation}
        \alpha(t)= x+ vt + o(1),\;\;\beta(t)=  \frac{v}{2}+ o(1),\;\;\text{as}\;\; t \to \infty.
    \end{equation}

    \item Let $v \in \mathcal{Y}$ and $x \in \mathcal{X}$. Then for $T_0>0$ large enough, there exists a solution $(\alpha(t), \beta(t))$ of \eqref{m-body} for $ t \geq T_0$ with
    \begin{equation}
        \alpha(t)= x+ vt + o(1),\;\;\beta(t)=  \frac{v}{2}+ o(1),\;\;\text{as}\;\; t \to \infty.
    \end{equation}
\end{enumerate}
\end{Prop}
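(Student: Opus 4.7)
The plan is to handle the two assertions in turn: part (1) by direct integration of \eqref{m-body} once the sharp decay of the force is available, and part (2) by a contraction fixed-point argument for the associated backward integral problem.

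For (1), let $(\alpha, \beta)$ be a hyperbolic solution. By Definition \ref{def-traje-here}(1) together with Lemma \ref{prelim}, we have $|\alpha_j(t) - \alpha_k(t)| \geq c\, t$ for all $j \neq k$ and $t$ sufficiently large, so
\[
|\dot\beta_j(t)| \;\leq\; \|Q\|_{L^2_x}^2 \sum_{k \neq j} \frac{1}{|\alpha_j(t) - \alpha_k(t)|^3} \;\lesssim\; t^{-3}.
\]
This integrable decay yields $\beta_j(t) = \beta_j^{\infty} + O(t^{-2})$ for some $\beta_j^{\infty} \in \R^4$. Integrating $\dot\alpha_j = 2\beta_j$ then gives $\alpha_j(t) = 2\beta_j^{\infty}\, t + x_j + O(t^{-1})$ for some $x_j \in \R^4$, and comparing with Lemma \ref{prelim} forces $v_j = 2\beta_j^{\infty}$. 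The standing convention $\sum_j \alpha_j \equiv 0$ then gives $\sum_j x_j = 0$, so $x \in \mathcal{X}$.

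For (2), I would fix $v \in \mathcal{Y}$, $x \in \mathcal{X}$ and parametrize a candidate solution by
\[
\alpha_j(t) = v_j t + x_j + u_j(t), \qquad \beta_j(t) = \tfrac{v_j}{2} + w_j(t),
\]
with $(u, w) \to 0$ as $t \to \infty$. Substituting into \eqref{m-body} and imposing the vanishing at infinity by integrating from $+\infty$ recasts the problem as the fixed-point equation $(u,w) = \mathcal{T}(u,w)$, defined componentwise by
\[
\mathcal{T}_j^{\,w}(u,w)(t) = \int_t^{\infty} \|Q\|_{L^2_x}^2 \sum_{k \neq j} \frac{\alpha_j(s) - \alpha_k(s)}{|\alpha_j(s) - \alpha_k(s)|^4}\, \d s, \qquad \mathcal{T}_j^{\,u}(u,w)(t) = -2 \int_t^{\infty} w_j(s)\, \d s,
\]
where $\alpha_j(s) - \alpha_k(s) = (v_j - v_k)s + (x_j - x_k) + u_j(s) - u_k(s)$. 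I would close the iteration on the ball
\[
E_{T_0, \rho} = \Bigl\{ (u,w) \in C\bigl([T_0,\infty); \R^{4m} \times \R^{4m}\bigr) \;:\; \sup_{t \geq T_0} \bigl( t|u(t)| + t^2 |w(t)| \bigr) \leq \rho \Bigr\},
\]
with $\rho$ small and $T_0 = T_0(v, x, \rho)$ large enough that $|\alpha_j(s) - \alpha_k(s)| \sim |v_j - v_k|\, s$ throughout.

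The core self-map and contraction estimates are then routine: the elementary bound $|f(a) - f(b)| \lesssim (\min(|a|,|b|))^{-4}|a-b|$ for $f(a) = a/|a|^4$, combined with the weights in $E_{T_0, \rho}$, gives $t^2 |\mathcal{T}^{\,w}| + t |\mathcal{T}^{\,u}| \leq \rho/2$ together with a Lipschitz constant that tends to $0$ as $T_0 \to \infty$. The one subtlety worth flagging is that the iteration must respect the center-of-mass condition $\sum_j u_j = \sum_j w_j = 0$ inherent in the choice $v, x \in \mathcal{X}$; this is automatic, since the kernel $(\alpha_j - \alpha_k)/|\alpha_j - \alpha_k|^4$ is antisymmetric in $(j,k)$, so the double sum $\sum_j \mathcal{T}_j^{\,w}$ vanishes identically and hence $\sum_j \mathcal{T}_j^{\,u} = 0$ as well. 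The unique fixed point then produces, via the ansatz above, a solution of \eqref{m-body} on $[T_0, \infty)$ with the prescribed asymptotics.
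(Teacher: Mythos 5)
Part (1) is identical to the paper's argument: on a hyperbolic orbit the force decays like $t^{-3}$, so $\beta$ converges at rate $t^{-2}$, and integrating $\dot\alpha = 2\beta$ gives the expansion, with the center-of-mass normalization placing $x$ and $v$ in $\mathcal{X}$.

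For part (2), the shooting-from-infinity fixed point is the same scheme the paper uses, but the chosen weights $(t, t^2)$ are the \emph{sharp} decay rates and leave no slack, so the self-map estimate does not close; the claimed bound $t^2|\mathcal{T}^w| + t|\mathcal{T}^u| \le \rho/2$ is false. There are two separate problems. (i) The dominant contribution to $\mathcal{T}^w$ is the force integrated along the free linear trajectory $v_j s + x_j$, giving $t^2|\mathcal{T}^w| \approx C_0$ for a constant $C_0 = C_0(v,\|Q\|_{L^2})$ that does not decrease with $\rho$ or $T_0$; taking $\rho$ small therefore immediately destroys the self-map. (ii) Even for $\rho$ large, $|w(s)| \le \rho s^{-2}$ gives $t|\mathcal{T}^u(t)| \le 2t\int_t^\infty |w(s)|\,\d s \le 2\rho$ with coefficient exactly $2$, so $t|\mathcal{T}^u| + t^2|\mathcal{T}^w| \le 2\rho + C_0$, which exceeds $\rho$ for every $\rho>0$ and every $T_0$, i.e.\ $\mathcal{T}$ never maps $E_{T_0,\rho}$ into itself. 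The paper sidesteps this by deliberately working with the strictly weaker weights $t^{1/3}$ on $\alpha - vt$ and $t^{5/3}$ on $2\beta - v$: since the true decay is $t^{-2/3}$ and $t^{-2}$ respectively, each application of $\Gamma$ gains a factor $T_0^{-1/3}$, which supplies both the self-map and the contraction. You could achieve the same by replacing $(t,t^2)$ with $(t^{1-\epsilon}, t^{2-\epsilon})$ for a small $\epsilon>0$, or by weighting the two components asymmetrically, $At|u| + t^2|w|$ with $A < 1/2$ and $\rho \sim C_0$ rather than small. Your observations that antisymmetry of the kernel preserves the center-of-mass constraint and that the shift $x_j$ belongs in the ansatz (it is in fact missing from the paper's $\Gamma^{(1)}$) are both correct and worth keeping.
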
 

\noindent
For the parabolic case, we explicitly construct a family of solutions.

\begin{Prop}[parabolic solutions] \label{parab} 
Let $\eta \ge 0$ and $b \in \mathcal{Y}$ be a minimizer of 
$$U(x) = \f12 \|Q\|_{L^2}^2 \sum_{j < k} \frac{1}{|x_j - x_k|^2}$$ 
with the constraint $ \sum_{j} |x_j|^2 =1$. Then the following $(\alpha(t), \beta(t))$ solves \eqref{m-body} for $ t \ge 0$:
\[
\alpha(t) = \big( 4\sqrt{U(b)}\ t + \eta \big)^{\f12}b, \quad \beta(t)= \frac{1}{2} \dot{\alpha}(t).
\]

\end{Prop}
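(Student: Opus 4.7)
\emph{Proof proposal.} The plan is to substitute the ansatz directly into \eqref{m-body} and observe that the system decouples into a scalar ODE for the radial factor $\rho(t):=\bigl(4\sqrt{U(b)}\,t+\eta\bigr)^{1/2}$ and an algebraic equation for the profile $b$, which will turn out to be the classical central configuration equation. Writing $\alpha(t)=\rho(t)\,b$ and $\beta(t)=\tfrac12\dot\rho(t)\,b$, the first relation $\dot\alpha_j=2\beta_j$ in \eqref{m-body} is immediate. Since $\alpha_j(t)-\alpha_k(t)=\rho(t)(b_j-b_k)$ and $\rho(t)>0$ for $t\ge 0$ (using $\eta\ge 0$ together with $b\in\mathcal Y$ to ensure no collisions), the right-hand side of the second equation of \eqref{m-body} equals
\[
-\frac{\|Q\|_{L^2}^{2}}{\rho(t)^3}\sum_{k\ne j}\frac{b_j-b_k}{|b_j-b_k|^4}.
\]
Differentiating $\rho^2=4\sqrt{U(b)}\,t+\eta$ twice yields $\ddot\rho=-4U(b)\,\rho^{-3}$, so matching $\dot\beta_j=\tfrac12\ddot\rho\,b_j$ with the above and cancelling the common factor $\rho^{-3}$ reduces \eqref{m-body} to the algebraic identity
\[
2U(b)\,b_j \;=\; \|Q\|_{L^2}^{2}\sum_{k\ne j}\frac{b_j-b_k}{|b_j-b_k|^4}, \qquad 1\le j\le m.
\]

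It remains to verify this identity for the constrained minimizer $b$. The Lagrange conditions on the manifold $\{\sum_j |x_j|^2=1\}\cap\mathcal X$ provide $\lambda\in\R$ and $\mu\in\R^4$ such that
\[
\nabla_{x_j}U(b)=2\lambda\,b_j+\mu,\qquad 1\le j\le m,
\]
where direct differentiation gives $\nabla_{x_j}U(x)=-\|Q\|_{L^2}^{2}\sum_{k\ne j}(x_j-x_k)|x_j-x_k|^{-4}$. Summing this Lagrange relation over $j$ and invoking translation invariance of $U$, which enforces $\sum_j\nabla_{x_j}U\equiv 0$, together with $\sum_j b_j=0$ (i.e.\ $b\in\mathcal X$), forces $\mu=0$.

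To pin down $\lambda$, I would use Euler's identity: since $U$ is homogeneous of degree $-2$, $\sum_j x_j\cdot\nabla_{x_j}U(x)=-2U(x)$. Taking the inner product of the Lagrange relation with $b_j$, summing, and using $\sum_j|b_j|^2=1$ gives $-2U(b)=2\lambda$, hence $\lambda=-U(b)$. Substituting back yields precisely the algebraic identity derived above, completing the verification. The only slightly subtle point is keeping track of the two constraints (the sphere and the hyperplane $\mathcal X$) in the Lagrange step, which is resolved cleanly by translation invariance of $U$; everything else is direct computation.
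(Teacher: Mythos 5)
Your proof is correct and follows essentially the same approach as the paper: make the homothetic ansatz $\alpha(t)=\rho(t)\,b$, reduce to a scalar ODE for $\rho$ together with the central-configuration identity $2U(b)b_j=\|Q\|_{L^2}^2\sum_{k\neq j}(b_j-b_k)|b_j-b_k|^{-4}$, and obtain that identity from the Lagrange multiplier condition at the constrained minimizer. The only cosmetic differences are that you determine the multiplier via Euler's identity for the degree $-2$ homogeneity of $U$ rather than by pairing with $b_j$ and summing directly (the same computation), and you make explicit the multiplier $\mu$ for the constraint $b\in\mathcal X$ and argue $\mu=0$ by translation invariance, a point the paper leaves implicit.
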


The proofs of the above two propositions are given in Appendix \ref{sec:furtherdetailsmbod}.

\subsection{Statement of the results} \label{sec:result}
	The following theorems are the main results of this article and state the existence of global multisoliton solutions with trajectories derived from solutions to \eqref{m-body} in Definition \ref{def-traje-here}.
 
	\begin{Th} \label{thm hyperbolic}
		Let $(\alpha^\infty, \beta^\infty)$ be a hyperbolic solution to \eqref{m-body} and $\lambda_j^\infty>0$. Then there exist a solution $ u \in C^0( [0, \infty), H^1(\R^4)) $ to \eqref{H} and $\gamma^\infty(t)$ such that 
		\begin{equation}
			\bigg \| u(t,\cdot)- \sum_{j=1}^m Q_{\lambda_j^{\infty}} \Big(  x-\alpha_j^\infty(t) \Big) e^{i\gamma_j^\infty(t)+ i\beta_j^\infty(t) \cdot x} \bigg \|_{H^1}= o(t^{-1+}) \quad \text{as } t \to +\infty.
		\end{equation}
	\end{Th}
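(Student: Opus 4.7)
The plan is to follow the Krieger--Martel--Rapha\"el construction, extended by the third author \cite{Wu}: build a high-order approximate $m$-bubble profile $\tilde{u}(t,x)$, solve \eqref{H} backward from final times $T_n \to +\infty$ with Cauchy data $u(T_n) = \tilde u(T_n)$, establish uniform $H^1$-estimates on the error $w = u - \tilde u$ via a bootstrap, and pass to a weak-$H^1$ limit $n \to \infty$ to obtain the desired solution on $[T_0, +\infty)$.

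\emph{Ansatz and the $m$-body reduction.} The leading ansatz reads
\[
\tilde u^{(0)}(t,x) = \sum_{j=1}^m Q_{\lambda_j(t)}\bigl(x-\alpha_j(t)\bigr)\, e^{i\gamma_j(t) + i \beta_j(t)\cdot x},
\]
with parameters to be determined. Plugging into \eqref{H} and isolating the contribution near the $j$-th bubble, the convolution potential generated by the remaining bubbles Taylor-expands, using the exponential decay of $Q$ and mass-criticality $\|Q_{\lambda_k}\|_{L^2} = \|Q\|_{L^2}$, as
\[
\phi_{|\tilde u^{(0)}|^2}(x) \approx -\sum_{k\neq j} \frac{\|Q\|_{L^2}^2}{|x-\alpha_k|^2}.
\]
Evaluating the induced force at $x = \alpha_j$ recovers precisely the right-hand side of \eqref{m-body}; prescribing $(\alpha_j, \beta_j)$ according to the hyperbolic trajectory of Proposition \ref{hyp} (with $\lambda_j$ close to $\lambda_j^\infty$) therefore annihilates the leading interaction. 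To drive the residual below $O(t^{-N})$ for $N$ large, we iteratively add correctors $\tilde u^{(\el)}$ localized near each bubble by solving inhomogeneous linear problems $L_\pm v_j = f_j$, where $L_\pm$ is the linearization of \eqref{H} at $Q_{\lambda_j^\infty}$ and $f_j$ encodes the Taylor-expanded interaction together with the residual of the previous step.

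\emph{Backward problem, modulation, and bootstrap.} For each $n$, we solve \eqref{H} backward on $[T_0, T_n]$ and decompose $u = \tilde u + w$, imposing orthogonality of $w$ against the (generalized) nullspace elements of each $L_\pm|_{Q_{\lambda_j^\infty}}$, which reflect the symmetries \eqref{inv} and dynamically fix $(\lambda_j, \alpha_j, \beta_j, \gamma_j)$. A localized energy--virial functional, built from a partition of unity subordinate to the balls $B(\alpha_j(t), a(t)/4)$ together with the conserved mass, energy and momentum, is coercive on the orthogonal complement; combined with the high-order residual bound on $\tilde u$, a bootstrap yields $\|w(t)\|_{H^1} = o(t^{-1+})$ uniformly in $n$. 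A standard $C^0_{\mathrm{loc}} H^1$-compactness argument then extracts the limiting global solution.

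\emph{Main obstacle.} The key difficulty, announced in the abstract, is a degeneracy in the generalized root space of $L_\pm$ specific to this four-dimensional setting: inverting $L_\pm$ on $f_j$ requires orthogonality to each generalized eigenvector, and the particular homogeneity of the Newton-type interaction $|x|^{-2}$ in $d=4$ makes certain solvability conditions marginally fail at a specific order in $t^{-1}$. Overcoming this forces us to enlarge the corrector ansatz and re-solve the modulation equations at sub-leading order --- effectively inverting $L_\pm$ on a larger invariant subspace than in \cite{KRM,Wu}. Once this algebraic step is carried out, the analytic bootstrap and the passage to the limit follow by now-standard arguments, giving the stated asymptotic $o(t^{-1+})$.
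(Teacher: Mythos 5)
The overall strategy you describe — high-order approximate multi-bubble profile, backward construction from final times $T_n$, bootstrap through modulation and a localized coercive energy functional, weak limit — does track the paper's scheme. However, there is a concrete and consequential gap: your modulation set is too small. You propose to fix only $(\lambda_j,\alpha_j,\beta_j,\gamma_j)$, which is $10$ real parameters per bubble, while the generalized root space of the linearized flow at $Q$ (Lemma~\ref{lem straight}) is $12$-dimensional, spanned by $iQ,\ \Lambda Q,\ \partial_{x_j}Q,\ i|x|^2 Q,\ \rho,\ ix_jQ$. The paper absorbs this by introducing two additional parameters per bubble: a quadratic phase $\mu_j$ via $e^{i\mu_j|x|^2}$ and a $\delta_j$ multiplying $\rho$ inside the bubble profile $V_j^{(N)} = Q + \delta_j \rho + W_j^{(N)}$ (see \eqref{eq action}, \eqref{eq approximate solution}). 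Without $\mu_j$ and $\delta_j$ you cannot impose the full orthogonality set \eqref{eq orthogonality}; in particular the directions $|x|^2 Q$ and $\rho$ are left unabsorbed, and the coercivity statement of Lemma~\ref{lem coercivity} — which explicitly demands orthogonality of the real part of the error against $Q$, $xQ$, and $|x|^2 Q$ and of the imaginary part against $\rho$ — then fails. That coercivity is the engine of the bootstrap, so the argument does not close as written.

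Your "main obstacle" paragraph names the degeneracy for generalized root space elements but then handles it only by saying that one must "enlarge the corrector ansatz and re-solve the modulation equations at sub-leading order," which is not yet a mechanism. The paper's mechanism is precise: the pair $(\mu_j,\delta_j)$ is added as modulation parameters exactly to match the $\rho$ and $i|x|^2 Q$ root directions, the ODE system \eqref{eq parameters} is set up so that the $\mu,\delta$ equations couple at the correct homogeneity, and (in the parabolic case, which drives the abstract's remark) the harmonicity/mean-value Lemma~\ref{lem F_n harmonic} is used to show that the solvability coefficients $d_j^{(n)}$ vanish through order $n=7$, which is what keeps $\delta$ decaying fast enough for the trajectory fixed-point argument to close. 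Even in the hyperbolic case you still need $(\mu_j,\delta_j)$ in the ansatz for the orthogonality/coercivity step, and you also need $d_j^{(1)}=d_j^{(2)}=d_j^{(3)}=0$ (see \eqref{eq first few terms hyperbolic}) to get the modulation trajectories to converge. None of this is recoverable from your sketch; it is the substance of the degeneracy you gesture at. Finally, a minor point: your bootstrap conclusion $\|w\|_{H^1}=o(t^{-1+})$ controls $u - \tilde u$, but the theorem's estimate is against $\sum Q_{\lambda_j^\infty}(\cdot - \alpha_j^\infty)e^{i\gamma_j^\infty + i\beta_j^\infty \cdot x}$; you still need Lemma~\ref{lem Q mod bound} together with the trajectory convergence $|P^{(N)}(t)-P^\infty(t)| = o(t^{-1+})$ (Remark~\ref{rmk P^N- P^infty}) to bridge that gap.
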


	\begin{Th} \label{thm parabolic}
		Let $(\alpha^\infty, \beta^\infty)$ be a parabolic solution to \eqref{m-body} 
	 and $\lambda^\infty>0$. Then there exist a solution $ u \in C^0( [0, \infty), H^1(\R^4))$ to \eqref{H} and $\gamma^\infty(t)$ such that 
		\begin{equation}
			\bigg \| u(t,\cdot)- \sum_{j=1}^m Q_{\lambda^{\infty}} \Big(  x-\alpha_j^\infty(t) \Big) e^{i\gamma_j^\infty(t)+ i\beta_j^\infty(t) \cdot x} \bigg \|_{H^1}= o(t^{-1+}) \quad \text{as } t \to +\infty.
		\end{equation}
	\end{Th}

The following are direct consequences of Theorem \ref{thm hyperbolic} and Theorem \ref{thm parabolic} combined with the pseudo-conformal symmetry.
 
\begin{Cor}[Finite-time blow-up at distinct points] \label{blow-up-dist} 
Let $m \geq 2$, $ v_1, v_2, \dots, v_m \in \R^4$ be arbitrary distinct points and $\lambda_j > 0$ for $ j = 1,2, \dots, m$. Then \eqref{H} has a solution $ u \in C^0((- \infty,0), H^1(\R^4))$ that blows up at $t=0$ such that
\begin{align*}
	\|\nabla u(t) \|_{L^2_x} \sim |t|^{-1} \;\; \text{and} \;\; |u(t)|^2 \rightharpoonup \sum_{j=1}^m \|Q\|_{L^2_x}^2 \delta_{v_j} \;\; \text{as} \;\; t \to 0^-.
\end{align*}
Moreover, there exist $\alpha_j, \beta_j, \gamma_j \in C^0(- \infty, 0)$ with 
\begin{equation}
    \alpha_j(t) = v_j + O(|t|), \;\; \beta_j(t)= \frac{v_j}{2|t|} + O(1) \;\; \text{as} \;\; t \to 0^-,
\end{equation} 
such that when writing 
\begin{equation} \label{eq expansion pseudo}
    u(t, x) =  \sum_{j =1}^m \frac{1}{t^2} Q_{\lambda_j}\Big( \frac{ x- \alpha_j(t) }{t}\Big) e^{i( \gamma_j(t) +  \beta_j(t) \cdot x + \frac{|x|^2}{4t})}  + \varepsilon(t,x),
\end{equation}
we have $\displaystyle \lim_{t \to 0^-}\| \varepsilon(t) \|_{L^2 \cap L^{4^-}} = 0$.
\end{Cor}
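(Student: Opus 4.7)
The plan is to deduce the statement from Theorem~\ref{thm hyperbolic} by composing the pseudo-conformal symmetry with time reversal. Both are symmetries of \eqref{H}, so the composition maps the forward multisoliton of Theorem~\ref{thm hyperbolic} to a candidate blow-up solution on $(-\infty,0)$; what remains is to choose the hyperbolic data that produce the prescribed concentration points and to match the image with the parameterization~\eqref{eq expansion pseudo}.

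First I construct a suitable hyperbolic trajectory. Given distinct points $v_1,\dots,v_m\in\mathbb{R}^4$, set $\bar v:=\tfrac{1}{m}\sum_j v_j$ and $\tilde v_j:=v_j-\bar v$, so that $(\tilde v_1,\dots,\tilde v_m)\in\mathcal{Y}$. Proposition~\ref{hyp}(2) applied with limit velocity $\tilde v$ and, say, $x=0$ produces a hyperbolic solution of \eqref{m-body}; adding the free center-of-mass drift $\bar v s$ (cf.\ the remark following \eqref{center-of-mass}) yields $(\alpha^\infty,\beta^\infty)$ with asymptotics
\begin{equation*}
\alpha_j^\infty(s)=v_js+x_j+o(1),\qquad \beta_j^\infty(s)=\tfrac{v_j}{2}+O(s^{-2}),\qquad s\to\infty,
\end{equation*}
the $O(s^{-2})$ rate following by bootstrapping Proposition~\ref{hyp}(1) into \eqref{m-body}, whose force decays like $s^{-3}$ on a hyperbolic orbit. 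Theorem~\ref{thm hyperbolic} with these data and scales $\lambda_j^\infty=\lambda_j$ then yields $u\in C^0([0,\infty),H^1(\mathbb{R}^4))$ with the multisoliton decomposition and error $\|w(s)\|_{H^1}=o(s^{-1+})$.

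Next I define
\begin{equation*}
U(t,x):=\frac{1}{t^2}\,u\!\bigl(-\tfrac{1}{t},-\tfrac{x}{t}\bigr)\,e^{i|x|^2/(4t)},\qquad t\in(-\infty,0),
\end{equation*}
which solves \eqref{H} as the composition of the pseudo-conformal transformation with time reversal, so $U\in C^0((-\infty,0),H^1)$. The regime $t\to 0^-$ corresponds to $s:=-1/t\to+\infty$, exactly where Theorem~\ref{thm hyperbolic} controls $u$. Substituting the decomposition of $u$ and using the radiality of $Q$ to absorb the sign in $-x/t$, each bubble rewrites as
\begin{equation*}
\frac{1}{t^2}\,Q_{\lambda_j}\!\Big(\frac{x-\alpha_j(t)}{t}\Big)\,e^{i(\gamma_j(t)+\beta_j(t)\cdot x+|x|^2/(4t))},
\end{equation*}
with $\alpha_j(t)=-t\,\alpha_j^\infty(-1/t)=v_j-tx_j+o(|t|)=v_j+O(|t|)$, and $\beta_j(t),\gamma_j(t)$ obtained by redistributing the transformed phase $-\beta_j^\infty(-1/t)\cdot x/t+|x|^2/(4t)$ through a completion of squares; the quadratic decay $\beta_j^\infty(s)-v_j/2=O(s^{-2})$ is exactly what produces the claimed rate $\beta_j(t)=\tfrac{v_j}{2}t+O(t^2)$.

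Finally I check the error and the concentration. By construction $\varepsilon(t,\cdot)$ is, up to phase, the pseudo-conformal transform of $w(-1/t,\cdot)$, and in $d=4$ this transform scales $L^p$-norms by $\|\varepsilon(t)\|_{L^p}=|t|^{4/p-2}\|w(-1/t)\|_{L^p}$. Using $H^1(\mathbb{R}^4)\hookrightarrow L^p$ for $p\in[2,4]$ and $\|w(s)\|_{H^1}=o(s^{-1+})$ one gets $\|\varepsilon(t)\|_{L^2}=o(|t|^{1-})$ and, for any $p\in[2,4)$, $\|\varepsilon(t)\|_{L^p}=o(|t|^{(4-p)/p-})\to 0$, hence $\|\varepsilon(t)\|_{L^2\cap L^{4^-}}\to 0$. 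The rate $\|\nabla U(t)\|_{L^2}\sim|t|^{-1}$ is immediate since each bubble's gradient is of order $|t|^{-1}$ via the chain rule on $Q_{\lambda_j}((x-\alpha_j(t))/t)$ and on the quadratic phase, while the error is subleading. The concentration $|U(t)|^2\rightharpoonup\sum_j\|Q\|_{L^2}^2\delta_{v_j}$ then follows because the $j$-th bubble concentrates at $v_j$ with total mass $\|Q_{\lambda_j}\|_{L^2}^2=\|Q\|_{L^2}^2$ by $L^2$-criticality. The only genuine obstacle is the bookkeeping of this phase redistribution so that the transformed bubble assumes the precise form~\eqref{eq expansion pseudo}; once this algebra is organized, every claim reduces to direct computation with the symmetries of \eqref{H} and the asymptotics in Proposition~\ref{hyp}(1).
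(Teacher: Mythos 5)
Your route is the same as the paper's: apply the pseudo-conformal symmetry (combined with time and space reflection) to the hyperbolic multisoliton of Theorem~\ref{thm hyperbolic}, read off $\alpha_j,\beta_j,\gamma_j$ by matching the transformed bubbles to \eqref{eq expansion pseudo}, and control the transformed error by the scaling identity $\|\varepsilon(t)\|_{L^p}=|t|^{4/p-2}\|w(-1/t)\|_{L^p}$ together with Sobolev embedding. The choice to build the hyperbolic orbit via Proposition~\ref{hyp}(2) plus a center-of-mass drift is a sensible and correct elaboration, and folding the time reflection directly into the transform (rather than applying it as a final step, as the paper does) is a cosmetic difference.

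There is, however, a genuine gap in your justification of $\|\nabla U(t)\|_{L^2}\sim|t|^{-1}$. You assert that "the error is subleading," but this requires $\|\nabla \varepsilon(t)\|_{L^2}=o(|t|^{-1})$, and that estimate does not follow from Theorem~\ref{thm hyperbolic} alone. Writing $\varepsilon=\pc\,w$ and computing the gradient, one finds a contribution proportional to $|t|^{-1}\|\nabla w(-1/t)\|_{L^2}$ (which is indeed $o(|t|^{-1})$ by the theorem's $H^1$ bound) \emph{plus} a contribution proportional to $\|y\,w(-1/t,y)\|_{L^2}$. The statement of Theorem~\ref{thm hyperbolic} gives no control on the weighted $L^2$ norm of $w$, so this second term is unaccounted for; in fact, $w\in H^1$ alone does not even guarantee $\pc\,w\in \dot H^1$. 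The paper circumvents this entirely by invoking conservation of energy for the transformed solution: from $\mathcal{E}(U(t))=O(1)$ one gets $\|\nabla U(t)\|_{L^2}^2=\tfrac{1}{4\pi^2}\int|\nabla\phi_{|U(t)|^2}|^2+O(1)$, and the convolution term is then estimated by Hardy--Littlewood--Sobolev and the scaling of $|U|^2$ under the transform, yielding $\sim |t|^{-2}$ without ever touching $\|\nabla\varepsilon\|_{L^2}$. You should either adopt this energy-conservation argument, or explicitly import the weighted bound $\|x\,w(s)\|_{L^2}\to 0$ from the bootstrap (Proposition~\ref{prop bootstrap}), since it is established in the proof of Theorem~\ref{thm hyperbolic} even though it is not recorded in the theorem statement.

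One further remark: the phase bookkeeping that you defer ("once this algebra is organized") is not merely cosmetic. Matching $\pc$ applied to $Q_{\lambda_j}(y-\alpha_j^\infty(s))e^{i(\gamma_j^\infty(s)+\beta_j^\infty(s)\cdot y)}$ to the form \eqref{eq expansion pseudo} naturally produces a linear phase coefficient of order $\beta_j^\infty(1/|t|)/|t|\sim |t|^{-1}$, so it is not transparent that the claimed asymptotics for $\beta_j(t)$ drop out without a nontrivial regrouping of the quadratic phase. The paper's own identification of $\beta_j(t)$ is likewise brief here, so this is not a defect unique to your write-up, but if you intend to claim the precise rate $\beta_j(t)=\tfrac{v_j}{2}t+O(t^2)$, the completion-of-squares step needs to be carried out explicitly rather than asserted.
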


\begin{Cor}[Finite-time collision blow-up] \label{blow-up-col}
Let $m \ge 2$ and $(\tilde{\alpha}(t), \tilde{\beta}(t))$ be a parabolic solution of \eqref{m-body} and $ \lambda > 0$. Assume $\tilde{\alpha}_j= b_j t^{\f12}+ O(1)$ as $t \to +\infty$. Then \eqref{H} has a solution $ u \in C^0((- \infty, 0), H^1(\R^4))$ that blows up at $t=0$ such that 
\begin{align*}
	\|\nabla u(t) \|_{L^2_x} \sim |t|^{-1} \quad \text{and} \quad |u(t)|^2 \rightharpoonup  m\|Q\|_{L^2_x}^2 \delta_0 \;\; \text{as} \;\; t \to 0^-.
\end{align*}
Moreover, there exist $\alpha_j, \beta_j, \gamma_j \in C^0(-\infty, 0)$ with
\begin{equation}
    \alpha_j(t) = b_j |t|^{\f12}+ O(|t|), \;\; \beta_j(t)= - \frac{b_j}{4} |t|^{- \f12} + O(|t|^{\f12}) \;\; \text{as} \;\; t \to 0^-,
\end{equation}
such that when writing
\begin{align*}
	u(t, x) = \sum_{j =1}^m   \frac{1}{t^2}Q_{\lambda}\Big( \frac{ x- \alpha_j(t) }{t}\Big) e^{i( \gamma_j(t) + \beta_j(t) \cdot x + \frac{|x|^2}{4t})} + \varepsilon(t,x),
\end{align*}
we have $\displaystyle \lim_{t \to 0^-}\| \varepsilon(t) \|_{L^2 \cap L^{4^-}} = 0$.
\end{Cor}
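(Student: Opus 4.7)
The plan is to deduce Corollary \ref{blow-up-col} from the parabolic multisoliton of Theorem \ref{thm parabolic} by applying the pseudo-conformal symmetry of \eqref{H} composed with time reversal. Let $U \in C^0([T_0, \infty), H^1(\R^4))$ denote the solution produced by Theorem \ref{thm parabolic} applied to the parabolic orbit $\tilde{\alpha}_j(s) = b_j s^{1/2} + O(1)$ from Proposition \ref{parab}, with common scale $\lambda$. I set
\begin{equation*}
    u(t, x) := \frac{1}{t^2}\, U\!\left(-\frac{1}{t},\, -\frac{x}{t}\right) e^{i|x|^2/(4t)}, \qquad t \in (-1/T_0, 0),
\end{equation*}
which is the pseudo-conformal transform followed by the time-reversal $v(t,x) \mapsto \overline{v}(-t, x)$; both operations preserve \eqref{H}, so $u$ solves \eqref{H} on this interval, and backward local well-posedness extends $u$ to $C^0((-\infty, 0), H^1(\R^4))$.

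Next, substituting the expansion of $U$ from Theorem \ref{thm parabolic} into the formula for $u$ and using the radial symmetry of $Q_\lambda$, each soliton profile $Q_\lambda(y - \tilde{\alpha}_j(s))\, e^{i\tilde{\gamma}_j(s) + i\tilde{\beta}_j(s) \cdot y}$ transforms into
\begin{equation*}
    \frac{1}{t^2}\, Q_\lambda\!\left(\frac{x - \alpha_j(t)}{t}\right) e^{i(\gamma_j(t) + \beta_j(t) \cdot x + |x|^2/(4t))},
\end{equation*}
with $\alpha_j(t) = -t\, \tilde{\alpha}_j(-1/t)$, while $\beta_j(t), \gamma_j(t)$ are read off algebraically from $\tilde{\beta}_j, \tilde{\gamma}_j$ after expanding the pseudo-conformal quadratic phase around $\alpha_j$ and absorbing cross-terms. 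Plugging in the explicit parabolic profile of Proposition \ref{parab}, namely $\tilde{\alpha}_j(s) = b_j s^{1/2} + O(1)$ and $\tilde{\beta}_j(s) = \tfrac12 \dot{\tilde{\alpha}}_j(s) = O(s^{-1/2})$, gives $\alpha_j(t) = b_j|t|^{1/2} + O(|t|)$ and the asymptotic for $\beta_j(t)$ claimed in the statement.

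For the error, I write $U = U_{\mathrm{ans}} + \eta$ with $\|\eta(s)\|_{H^1} = o(s^{-1+})$, so $\varepsilon(t)$ is the image of $\eta(-1/t)$ under the same transformation. Since the pseudo-conformal is an $L^2$-isometry, $\|\varepsilon(t)\|_{L^2_x} = \|\eta(-1/t)\|_{L^2_y} = o(|t|^{1-}) \to 0$. For the $L^{4^-}$ piece, the scaling identity $\|\tau^{-2} f(\cdot/\tau)\|_{L^p(\R^4)} = |\tau|^{4/p - 2}\|f\|_{L^p}$ combined with Sobolev embedding $H^1(\R^4) \hookrightarrow L^{4^-}$ applied to $\eta(-1/t)$ produces a positive power of $|t|$, giving $\|\varepsilon(t)\|_{L^{4^-}} \to 0$ as $t \to 0^-$. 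Since all $\alpha_j(t) \to 0$, each rescaled bubble $t^{-2} Q_\lambda((x - \alpha_j(t))/t)$ concentrates at the origin with $L^2$-mass $\|Q\|_{L^2_x}^2$, so summing and combining with the decay of $\varepsilon$ gives $|u(t)|^2 \rightharpoonup m\|Q\|_{L^2_x}^2\, \delta_0$; the blow-up rate $\|\nabla u(t)\|_{L^2} \sim |t|^{-1}$ follows from $\|\nabla[t^{-2} Q_\lambda(\cdot/t)]\|_{L^2} \sim |t|^{-1}$ applied to each bubble, with interaction terms controlled by the inter-bubble separation $\sim |t|^{1/2} \gg |t|$.

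The main content, beyond invoking Theorem \ref{thm parabolic}, is purely algebraic bookkeeping: tracking how the modulation triple $(\tilde{\alpha}_j, \tilde{\beta}_j, \tilde{\gamma}_j)$ reassembles under the composed symmetry, correctly handling the interaction between the spatial translation and the pseudo-conformal quadratic phase $|x|^2/(4t)$, and confirming that all residual constants are absorbed into $\gamma_j(t)$ and the small error. Given the explicit parabolic profile of Proposition \ref{parab}, this is a direct computation rather than an analytical obstacle.
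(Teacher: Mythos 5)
Your proposal follows essentially the same route as the paper: apply the pseudo-conformal transform composed with time reversal to the solution of Theorem \ref{thm parabolic}, track how $(\tilde\alpha_j,\tilde\beta_j,\tilde\gamma_j)$ transform, and use the $L^p$-scaling identity together with Sobolev embedding to kill $\varepsilon$ in $L^2\cap L^{4^-}$. The paper only sketches the hyperbolic corollary and says the parabolic one is analogous, so your write-up is an acceptable reconstruction.

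One place where you diverge slightly: for the rate $\|\nabla u(t)\|_{L^2}\sim|t|^{-1}$, the paper invokes conservation of energy and bounds $\frac{1}{t^2}\int|\nabla\phi_{|\tilde u(t^{-1})|^2}|^2$ via Hardy--Littlewood--Sobolev (see \eqref{en-con}), which sidesteps the need to bound $\nabla\varepsilon$ or the contributions from the phases. Your alternative of scaling $\|\nabla[t^{-2}Q_\lambda(\cdot/t)]\|_{L^2}$ bubble by bubble is also viable but then you must separately observe that (i) the terms where $\nabla$ hits $e^{i|x|^2/(4t)}$ or $e^{i\beta_j\cdot x}$ are $O(|t|^{-1/2})$ on the bubble support, and (ii) $\|\nabla\varepsilon\|_{L^2}$ is $o(|t|^{-1})$ (which follows from the scaling identity and the $o(s^{-1+})$ decay, but is not for free). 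You should make both of these explicit. Finally, the claimed $\beta_j(t)=O(|t|^{3/2})$ is not the naive pullback of $\tilde\beta_j$ under pseudo-conformal/time-reversal (which alone would give a factor $t^{-1}\tilde\beta_j(-1/t)=O(|t|^{-1/2})$); it comes from the near-cancellation against the term $\frac{\alpha_j(t)}{2t}\cdot x$ produced by expanding the quadratic phase $\frac{|x|^2}{4t}$ about $\alpha_j(t)$, as you anticipate. You assert this cancellation but do not carry it out; since it involves a genuine two-orders-of-magnitude cancellation rather than an order-of-magnitude estimate, it is worth writing the short computation explicitly.
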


\;\\
	A few remarks are in order.\;
	\begin{Rem}
    \begin{enumerate} [label=(\roman*)]  \setlength\itemsep{-1pt}
    \item In the Corollaries we use the notation $L^{p^-} = \bigcap_{0 < \epsilon \ll1} L^{p - \epsilon}$.
        \item The solutions in Theorem \ref{thm hyperbolic} and Theorem \ref{thm parabolic} are of the strong interaction regime, in the sense described by Martel-Raph\"ael \cite{MR}. This means the trajectory of each soliton depends to leading order on the presence of the other solitons. 
        \item Theorem \ref{thm hyperbolic} and the blow-up in Corollary \ref{blow-up-dist} relate to Merle's solutions \cite{Merle-k-bl}. The prescribed distinct points are limit velocities 
        of the hyperbolic orbits, 
        which lie outside of the collision set (they must be chosen distinct). 
        \item  The assumption in Theorem \ref{thm parabolic} that the masses $\lambda_j$ are identical is the same as in the statements in \cite{KRM,Wu} concerning parabolic trajectories. We expect this to be a technical assumption, which in our argument implies necessary decay estimates in Section \ref{sec:mod} (see the end of the proof of Proposition \ref{prop estimate on G(epsilon)}).
    \end{enumerate}
	\end{Rem}

    \begin{Rem} (i) Taking the $m$-body trajectories of Proposition \ref{hyp} in Theorem \ref{thm hyperbolic}, we obtain the following.
        For any $x_j, v_j \in \R^4$, $\lambda_j > 0$, $j = 1, \dots m$ with $v_j \neq v_k$ if $ j \neq k$, there exist $\gamma_j(t) = O(t) $ and a global solution $u(t) $ of \eqref{H} such that 
            \begin{align} \label{linear-traject}
& \bigg \|u(t) -  \sum_{j =1}^m Q_{\lambda_j}(\cdot - x_j - v_j t) e^{i(\f12v_j\cdot x + \gamma_j(t) )} \bigg \|_{H^1} =  o(t^{-1+}),\;\; \text{as}\;t \to +\infty.
        \end{align}
Hence Theorem \ref{thm hyperbolic} relates to  \cite{MM}, where the solution $u(t)$ is \emph{asymptotically a sum of solitary solutions of \eqref{mass-NLS} moving with constant speed along a line (through Galilean motion)}. Here, however, our solutions are still strongly interacting, whereas $v_j \neq v_k$ is a typical decoupling condition for weak interaction in the NLS, see \cite{MM}.\\[2pt]
(ii) We expect our solutions to be unstable due to $L^2$-criticality, i.e. the linear instability of $Q$, and the norm in \eqref{linear-traject} to have a sharp finite decay rate (not exponential). This expectation is consistent with the NLS literature, however let us note even for the local NLS stability questions are generally unclear because of the lack of uniqueness results. 
\end{Rem}

\noindent
Let us now sketch the argument for the proof of Corollary \ref{blow-up-dist}. The proof of Corollary \ref{blow-up-col} follows similarly.
\begin{proof}[Sketch of proof of Corollary \ref{blow-up-dist}] \
The proof follows from Theorem \ref{thm hyperbolic} in combination with the pseudo-conformal transformation
\begin{equation}
    \pc u(t,x)= \frac{1}{t^2} \bar{u} \Big( \frac{1}{t}, \frac{x}{t} \Big) e^{i\frac{|x|^2}{4t}}.
\end{equation} 
According to the choice of $v_1, v_2, \dots, v_m$, we have a hyperbolic solution $(\tilde{\alpha}, \tilde{\beta})$ of \eqref{m-body}, a function $\tilde{\gamma}(t)$  and a corresponding  global solution $\tilde{u}(t,x)$ of \eqref{H} as in Theorem \ref{thm hyperbolic}. We apply the pseudo-conformal symmetry to obtain a solution $u(t,x) := \pc \tilde{u}(t,x)$ which, by energy conservation, satisfies 
\begin{equation}\begin{split} \label{en-con}
    \|\nabla u(t) \|_{L^2_x}^2 &= \frac{1}{8\pi^2} \int |\nabla \phi_{|u(t)|^2}|^2  + O(1) = \frac{1}{8\pi^2t^2} \int |\nabla \phi_{|\tilde{u}(t^{-1})|^2}|^2 + O(1)
\end{split}\end{equation}
as $ t \to 0^+$. Hence by Theorem \ref{thm hyperbolic} and the Hardy-Littlewood-Sobolev inequality we infer $\| \nabla u \|_{L^2} \sim t^{-1}$ as $t \to 0^+$. We  have $\| \tilde{\varepsilon} \|_{H^1}= o(t^{-1+})$ as $t \to +\infty$, where 
\begin{equation}
    \tilde{\varepsilon}(t,x)= \tilde{u}(t, x)- \sum_{j =1}^m  Q_{\lambda_j}\big( x- \tilde{\alpha}_j(t) \big) e^{i( \tilde{\gamma}_j(t) +  \tilde{\beta}_j(t) \cdot x)},
\end{equation}
thus we let $\varepsilon(t,x) := \pc \tilde{\varepsilon}(t,x)$. Then $\varepsilon$ satisfies \eqref{eq expansion pseudo} with
\[
    \alpha_j(t) = t \tilde{\alpha}_j(t^{-1}),\;\; \beta_j(t) = - t^{-1}\tilde{\beta}_j(t^{-1}),\;\; \gamma_j(t) = -\tilde{\gamma}_j(t^{-1}).
\]
Note that for $p>1$ we have
\begin{equation}
    {\| \varepsilon(t,\cdot) \|}_{L^p}= t^{\frac{4}{p}-2} {\| \tilde{\varepsilon}(t^{-1}, \cdot) \|}_{L^p}.
\end{equation}
Hence we obtain $\| \varepsilon(t) \|_{L^2 \cap L^{4-}} \to 0$ as $t \to 0^+$ by Sobolev's inequality. Furthermore, for the weak convergence statement, we note given any Schwartz function $\Phi \in \mathcal{S}(\R^4)$ and using the notation $ Q_j (t,\cdot ) = Q_{\frac{ \lambda_j}{t}}( \cdot - \tilde{\alpha}_j(t^{-1}))$ we have
\begin{align*}
\int \Phi |u(t)|^2 &= \sum_{j =1}^m \int \Phi Q^2_j(t) + \sum_{j \neq k} 2\int \Phi  Q_j(t) Q_k(t) +  \sum_{j =1}^m 2\int \Phi \mathrm{Re}\big( Q_j(t) \varepsilon(t)\big)\\
&\;\;\;+ \int \Phi |\varepsilon(t)|^2.
\end{align*}
The convergence to delta functions as $ t \to 0^+$ follows since $Q_j^2(t)$ are approximation kernels (up to normalizing in $L^1$),  we have upper bounds of order $O(e^{-\frac{c}{t}|v_j - v_k|})$ for the second integrals and the remaining terms vanish by the above statement for $\varepsilon(t)$. In order to conclude the final claim we use time reflection symmetry.
\end{proof}
\;\\
\emph{Outline}.\;The proofs of Theorem \ref{thm hyperbolic} and Theorem \ref{thm parabolic} follow the strategy of \cite{KRM} and in particular the adaptation in \cite{Wu}. To begin with, in Section \ref{sec:gs-op-properties}, we collect properties of the linearized operator about the ground state, which we will rely on in the remaining sections. This is followed by Section \ref{sec:approx}, where we describe the approximation scheme which improves the decay rates of the interaction part of the error. Then in Section \ref{sec:traje} we guarantee accuracy of the approximation and solve the modulation equations, followed by a reduction of the proofs of Theorem \ref{thm hyperbolic} and Theorem \ref{thm parabolic} to a bootstrap assumption in Section \ref{sec:bootstrap}. Finally in Section \ref{sec:mod} we prove the bootstrap argument and control the necessary modulation error.\\[10pt]
\emph{Notation}: We use the usual notation $a \lesssim b$ if $ a \leq Cb$ for a constant $C> 0 $ independent of $a,b$, Similarly we use $ a \gtrsim b$ and $ a\sim b$ if and only if $ a\lesssim b, a \gtrsim b$. We also use the typical $f(x)=O(g(x))$ and $f(x) = o(g(x))$ notation and further write $ O_{d}(g(x))$ if the implicit constant depends on a parameter $d$ and grows at most exponentially as $ d \to + \infty$.

\section{Properties of the ground state operator}\label{sec:gs-op-properties}

The ground state solutions $ u(t,x) = e^{it} Q(x) $ with $Q \in H^1(\R^4)$ satisfy
\begin{align} \label{Q}
- \Delta Q + \phi_{Q^2}Q =  -Q\;\;\;\text{on}\; \R^4.
\end{align}
The following Lemma proved in \cite{KLR} asserts that \eqref{Q} has a unique positive solution.
\begin{Lemma}[ground state]
The equation \eqref{Q} has a (up to symmetry)  unique radial positive solution $Q(r) > 0$ in $ H^1(\R^4)$.
\end{Lemma}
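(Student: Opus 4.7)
The plan is to establish the lemma in three stages: existence via Weinstein-type minimization, positivity via the strong maximum principle, and uniqueness by reducing the radial equation to a coupled ODE system and invoking a Kwong--Lieb style shooting argument. I expect uniqueness to be the main obstacle.

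For existence, I would minimize the scale-invariant Gagliardo--Nirenberg--Hartree functional
\begin{equation*}
    J(u) = \frac{\|\nabla u\|_{L^2}^2 \|u\|_{L^2}^2}{\mathcal{D}(u)}, \qquad \mathcal{D}(u) := \iint_{\mathbb{R}^4 \times \mathbb{R}^4} \frac{|u(x)|^2|u(y)|^2}{|x-y|^2}\,\d x\,\d y,
\end{equation*}
over $H^1(\mathbb{R}^4) \setminus \{0\}$. Hardy--Littlewood--Sobolev gives $\mathcal{D}(u) \lesssim \|u\|_{L^{8/3}}^4$ and interpolation bounds $J$ below by a positive constant. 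Schwarz symmetrization is admissible because $|x|^{-2}$ is radially decreasing (so $\mathcal{D}(u^*) \geq \mathcal{D}(u)$ while $\|\nabla u^*\|_{L^2} \leq \|\nabla u\|_{L^2}$), and the Strauss compact embedding $H^1_{\mathrm{rad}}(\mathbb{R}^4) \hookrightarrow L^{8/3}$ extracts a non-trivial radial minimizer $Q_0 \geq 0$; a rescaling $Q = \mu Q_0(\lambda \cdot)$ turns the Euler--Lagrange equation into \eqref{Q}. Strict positivity then follows by rewriting \eqref{Q} as $(-\Delta + 1) Q = (|x|^{-2} \ast Q^2)\,Q$ with a non-negative source and invoking the strong maximum principle, since the resolvent kernel of $-\Delta + 1$ on $\R^4$ is strictly positive. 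Standard elliptic regularity together with the exponential decay bound recalled in the introduction then gives a smooth classical solution.

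Uniqueness is the heart of the matter. Since in $d=4$ the kernel $|x|^{-2}$ is (up to a constant) the Newton potential, any radial $Q$ yields $\psi := - \phi_{Q^2}$ solving $-\Delta \psi = 2\pi^2 Q^2$, so that \eqref{Q} becomes the coupled radial system
\begin{equation*}
    -Q'' - \tfrac{3}{r} Q' + Q = \psi Q, \qquad -\psi'' - \tfrac{3}{r}\psi' = 2\pi^2 Q^2,
\end{equation*}
with $Q'(0) = \psi'(0) = 0$ and $Q, \psi \to 0$ as $r \to \infty$. Following \cite{KLR}, the approach is to combine Pohozaev-type identities with a shooting argument in the four-dimensional phase space of this system, together with Sturm oscillation on the scalar linearization, to rule out a second positive radial orbit. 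The genuine obstacle---and the reason Kwong's uniqueness for the local mass-critical NLS does not transfer verbatim---is the nonlocal coupling through $\psi$: a pointwise Sturm comparison on $Q$ alone does not close, so one must work with the $(Q,\psi)$ system simultaneously, exploiting the monotonicity of $\psi$ and the triviality of the radial kernel of the linearization at $Q$ (the spectral input that will in any case be recorded in Section \ref{sec:gs-op-properties}).
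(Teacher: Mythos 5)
The paper does not prove this lemma at all: it states that the result is ``proved in \cite{KLR}'' and, in the introduction, attributes existence to Weinstein's variational argument and uniqueness to Lieb's $d=3$ proof as adapted in \cite[Section 4]{KLR} to $d=4$. So there is no in-paper argument to compare against; what you have done is reconstruct the proof that the cited literature supplies. Your existence and positivity steps are sound and essentially complete: the scale-invariant Weinstein quotient, the Hardy--Littlewood--Sobolev lower bound, Riesz rearrangement for the numerator of $\mathcal{D}$, Strauss compactness on $H^1_{\mathrm{rad}}(\R^4)\hookrightarrow L^{8/3}$, rescaling of the Euler--Lagrange equation, and the strong maximum principle via the positive Bessel kernel of $(-\Delta+1)^{-1}$ are exactly the standard ingredients. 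Your uniqueness step correctly identifies the structural fact that makes $d=4$ tractable (the kernel $|x|^{-2}$ is, up to a constant, the Newton potential, so the problem reduces to a coupled radial ODE system), and correctly flags the genuine obstacle (the nonlocal coupling through $\psi$ blocks a naive Kwong-type Sturm comparison on $Q$ alone). However, that part of your write-up is a placeholder rather than a proof: you state ``following \cite{KLR}, the approach is to combine Pohozaev-type identities with a shooting argument\ldots'' without carrying any of it out. Since the paper itself also only cites \cite{KLR} at this point, that level of detail is acceptable here, but you should be aware that you have not actually closed the uniqueness argument.

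One small correction: with the convention $\Delta\big(|x|^{-2}\big)=-4\pi^{2}\delta_{0}$ on $\R^4$ (since $|S^3|=2\pi^2$), your function $\psi=-\phi_{Q^2}=|x|^{-2}\ast Q^2$ satisfies $-\Delta\psi=4\pi^{2}Q^{2}$, not $2\pi^{2}Q^{2}$. The missing factor of $2$ is immaterial to the structure of the ODE system, but worth fixing; note the paper's own remark $\Delta\Phi=|u|^2$ for $\Phi=(2\pi^2)^{-1}\phi_{|u|^2}$ appears to carry the same slip.
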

\noindent
We need to linearize \eqref{H} at $Q$, thus for $u(t) = e^{it}(Q + \varepsilon(t))$  in \eqref{H} with $ \varepsilon = \varepsilon_1 + i \varepsilon_2$, we have
\begin{align} \label{lin-flow}
 \partial_t \begin{pmatrix}
    \varepsilon_1\\
    \varepsilon_2
\end{pmatrix} + \begin{pmatrix}
    0 &  -L_-\\
    L_+ & 0
\end{pmatrix} \begin{pmatrix}
    \varepsilon_1\\
    \varepsilon_2
\end{pmatrix} = \begin{pmatrix}
     \text{Im}(\mathcal{N}(Q, \varepsilon))\\
     -\text{Re}(\mathcal{N}(Q, \varepsilon))
\end{pmatrix},
\end{align}
where $\mathcal{N}(Q, \varepsilon)$ is nonlinear in $\varepsilon$ and the linearized operators $L_{\pm}$ are given by
\begin{align}\label{lin-op}
L_{+} &= -\Delta + 1 + \phi_{Q^2} + 2 Q \phi_{(\cdot\;  Q)},\\ \label{lin-op2}
L_{-} &= -\Delta + 1 + \phi_{Q^2}. 
\end{align}
Therefore $L_{\pm}$ are posed on  $L^2(\R^4)$ with domain $ D = H^2(\R^4)\subset L^2(\R^4)$ and we recall 
\[
\phi_{Q^2} = -  |x|^{-2}\ast Q^2,\;\;\phi_{(\cdot \; Q)} = -   |x|^{-2} \ast (\;\cdot \; Q),\;\; 
\]
highlighting that the operator $L_+$ is non-local. We now state the following conclusion.
\begin{Lemma} \label{lem straight}\phantom{a}
\begin{itemize}
    \item[(i)] $L_{\pm}$ are self-adjoint, the essential spectrum is $ \emph{spec}_{\emph{e}}(L_\pm) = [1, \infty)$, $ 0 \in \emph{spec}(L_\pm)$ and since $Q$ is the ground state $L_- \geq 0$.
    \item[(ii)] The  following are elements of the generalized root space of the linear flow in \eqref{lin-flow} 
    \begin{align*}
    i Q, \;\Lambda Q, \; \partial_{x_j}Q,\; i |x|^2Q,\; \rho(x),\;  i x_j Q,\;\; j = 1,2,3,4,
    \end{align*}
    where $\Lambda = 2 + x \cdot \nabla$ and 
    \begin{align}
    &L_-Q = L_+(\partial_{x_j}Q) = 0,\; L_+(\Lambda Q) = -2 Q,\; L_-(|x|^2 Q) = -4 \Lambda Q,\\
    & L_-(x_j Q) = -2\partial_{x_j} Q,\; L_+\rho = - |x|^2 Q.
    \end{align}
    \item[(iii)] We have 
    \begin{align*}
    &(Q,\rho) =   \f12 (\Lambda Q,|x|^2 Q) =   -\f12\| x Q\|_{L^2}^2, \;(Q, \Lambda Q) = 0,\\
    &(x Q, \nabla Q ) = - 2 \|Q\|^2_{L^2}. 
    \end{align*}
\end{itemize}
\end{Lemma}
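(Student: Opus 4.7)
I plan to handle (i), (ii), (iii) in order, using the explicit generators of (ii) to witness the kernel claims in (i), and then using (ii) together with integration by parts to derive (iii).

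For (i), I will write $L_\pm = H_0 + V_\pm$ with $H_0 = -\Delta + 1$ self-adjoint on $H^2(\R^4)$. The exponential decay $Q(x) \lesssim e^{-c|x|}$ forces $\phi_{Q^2}(x) = O((1+|x|)^{-2})$ (split the convolution over $|y|\leq |x|/2$ and its complement), so multiplication by $\phi_{Q^2}$ is a symmetric, relatively compact perturbation of $H_0$. The nonlocal factor $f \mapsto 2Q \phi_{(fQ)}$ factors as multiplication by $Q$, convolution with $-|x|^{-2}$, and again multiplication by $Q$; the outer $Q$ yields compactness from $H^2$ to $L^2$ via Hardy-Littlewood-Sobolev combined with the exponential decay. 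Self-adjointness then follows from Kato-Rellich, and Weyl's theorem identifies $\mathrm{spec}_{\mathrm{c}}(L_\pm)=[1,\infty)$. That $0 \in \mathrm{spec}(L_\pm)$ is witnessed by the kernel elements in (ii). For $L_-\geq 0$: $L_-$ is a real Schr\"odinger-type operator with a bounded potential and the strictly positive $Q>0$ satisfies $L_-Q = 0$; the Perron-Frobenius principle then identifies $0$ as the bottom of $\mathrm{spec}(L_-)$.

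For (ii), I will use one-parameter symmetries of \eqref{H}. Translation invariance gives $L_+\partial_{x_j}Q = 0$ by differentiating the profile equation in the translation parameter. The $L^2$-invariant scaling $u_\lambda(x) = \lambda^2 Q(\lambda x)$ solves $-\Delta u_\lambda + \lambda^2 u_\lambda + \phi_{u_\lambda^2} u_\lambda = 0$, and differentiating at $\lambda = 1$ produces $L_+ \Lambda Q = -2Q$. The remaining identities $L_-(x_j Q) = -2\partial_{x_j}Q$ and $L_-(|x|^2 Q) = -4\Lambda Q$ follow by direct computation, using $\Delta(fQ) = (\Delta f)Q + 2\nabla f\cdot\nabla Q + f\Delta Q$ with $f = x_j$ and $f=|x|^2$ (in particular $\Delta(|x|^2) = 8$ in $d=4$) and $L_-Q=0$ to cancel the $f L_-Q$ term. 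Existence of $\rho$ with $L_+\rho = -|x|^2 Q$ requires $|x|^2 Q \in \mathrm{Range}(L_+) = (\ker L_+)^\perp$; since $Q$ is radial, $|x|^2 Q$ is even in each coordinate, while each $\partial_{x_j}Q \in \ker L_+$ is odd in $x_j$, so the orthogonality is automatic.

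For (iii), self-adjointness of $L_+$ together with $L_+\Lambda Q = -2Q$ and $L_+\rho = -|x|^2 Q$ yields $(Q,\rho) = -\tfrac12(L_+\Lambda Q,\rho) = -\tfrac12(\Lambda Q, L_+\rho) = \tfrac12(\Lambda Q, |x|^2 Q)$. Expanding $\Lambda Q = 2Q + x\cdot\nabla Q$ and integrating by parts,
\begin{equation*}
(\Lambda Q, |x|^2 Q) = 2\|xQ\|_{L^2}^2 + \tfrac12\int |x|^2\, x\cdot\nabla(Q^2)\,dx = 2\|xQ\|_{L^2}^2 - \tfrac12\int \mathrm{div}(|x|^2 x)\, Q^2\,dx = -\|xQ\|_{L^2}^2,
\end{equation*}
since $\mathrm{div}(|x|^2 x) = 6|x|^2$ in $\R^4$. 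The identity $(Q,\Lambda Q) = 0$ comes from $(Q, x\cdot\nabla Q) = \tfrac12\int x\cdot\nabla(Q^2) = -2\|Q\|_{L^2}^2$ cancelling $(Q,2Q) = 2\|Q\|_{L^2}^2$, reflecting $L^2$-criticality. Finally $(x_jQ,\partial_{x_j}Q) = \tfrac12\int x_j\partial_{x_j}(Q^2) = -\tfrac12\|Q\|_{L^2}^2$, and summation over $j = 1,\dots,4$ gives $(xQ,\nabla Q) = -2\|Q\|_{L^2}^2$.

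The main obstacle I anticipate is establishing the relative compactness of the nonlocal Hartree piece in (i), since $2Q\phi_{(\cdot Q)}$ is neither a multiplication nor a differential operator but a three-step composition involving a Riesz-type kernel in $\R^4$; the remaining work in (ii)--(iii) is essentially bookkeeping driven by the symmetries of \eqref{H} and the radial evenness of $Q$.
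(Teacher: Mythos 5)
Your computations in (ii) and (iii) are correct, and they match what the paper intends (the paper records Lemma 2.2 without proof as a ``straightforward conclusion''). The symmetry-differentiation derivations of $L_+\partial_{x_j}Q=0$ and $L_+\Lambda Q=-2Q$, the direct Leibniz computations giving $L_-(x_jQ)=-2\partial_{x_j}Q$ and $L_-(|x|^2Q)=-4\Lambda Q$, and the integrations by parts in (iii) (with $\mathrm{div}(x)=4$, $\mathrm{div}(|x|^2x)=6|x|^2$ in $\R^4$) all check out. Your sketch of (i) via Kato--Rellich and Weyl, and of $L_-\geq0$ via the positivity of the kernel element $Q$, is also the standard route.

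The one genuine gap is the existence of $\rho$. You argue that $|x|^2Q\in\mathrm{Range}(L_+)=(\ker L_+)^\perp$ because $|x|^2Q$ is even while the $\partial_{x_j}Q$ are odd. But this only shows orthogonality of $|x|^2 Q$ to the \emph{known} kernel elements; to invoke the Fredholm alternative you need orthogonality to \emph{all} of $\ker L_+$, and at this point in the paper the characterization $\ker L_+=\mathrm{span}\{\partial_{x_j}Q\}$ has not yet been established (that is the non-degeneracy Lemma \ref{non-deg}, which comes afterwards and itself relies on input from \cite{KLR}). If you quietly assume the full non-degeneracy you risk circularity. The clean fix is the one the paper's Remark points to: decompose $L^2(\R^4)$ by spherical harmonics, observe that $L_+$ preserves the radial sector and that $|x|^2Q$ is radial, and invoke the triviality of $\ker(L_+)$ on radial functions from \cite[Theorem 4]{KLR}; then the Fredholm alternative \emph{on the radial sector} yields a unique radial $\rho$ solving $L_+\rho=-|x|^2Q$. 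Your parity observation is correct but insufficient by itself; it is the radial non-degeneracy, not parity, that closes the argument.

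A secondary, terminological point: what the Weyl-theorem argument in (i) actually establishes is $\mathrm{spec}_{\mathrm{ess}}(L_\pm)=[1,\infty)$. Since $L_\pm$ may a priori have embedded eigenvalues in $[1,\infty)$ (the paper itself notes $L_+$ may have infinitely many eigenmodes in $(0,1)$, being nonlocal), the identification of $\mathrm{spec}_{\mathrm{c}}$ with $[1,\infty)$ is a harmless abuse; your argument gives the essential spectrum, which is what is actually used.
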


\begin{Rem} We note that $\rho$ is the unique radial solution from \cite[Lemma 2.1]{KLR}, and furthermore the generalized root space of \eqref{lin-flow} is completely characterized by Lemma \ref{lem straight}.
\end{Rem}
\begin{Rem}  $L_+$ is non-local and there may be infinitely many eigenmodes in the gap $ (0, 1)$. However, similarly to e.g. \cite{KRM} and \cite{KLR},  this is not relevant here. The bootstrap procedure in Section \ref{sec:bootstrap} \& Section \ref{sec:mod} only relies on the coercivity in Lemma \ref{lem coercivity} below, for which we will show $L_+ \geq 0$ on $Q^{\perp} \subset L^2(\R^4)$ in the following,  and thus positive eigenvalues do not need to be modulated. 
\end{Rem}
\;\\
In this section we establish further spectral and positivity properties of \eqref{lin-op} and \eqref{lin-op2}.

\begin{Lemma}[Non-degeneracy of $\ker(L_{+})$] \label{non-deg} We have non-degeneracy of the kernel of $L_{\pm}$, i.e. if $L_{+} u = 0$, then $ u = a \cdot \nabla Q$ for some $ a \in \R^4$. Likewise if $L_{-} u = 0$, then $ u = b Q$ for some $ b \in \R$.
\end{Lemma}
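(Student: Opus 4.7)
The non-degeneracy of $L_-$ is immediate from Perron--Frobenius: $L_- = -\Delta + 1 + \phi_{Q^2}$ is a self-adjoint Schr\"odinger operator with bounded attractive potential, and $L_- Q = 0$ with $Q > 0$ forces $Q$ to be the (simple) ground state. Hence $\ker L_- \cap L^2(\R^4) = \R Q$, which gives the claim.

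For $L_+$, the plan is to decompose by angular momentum. Since $Q$ is radial and the convolution kernel $|x|^{-2}$ is rotation-invariant, $L_+$ commutes with $SO(4)$, so along the decomposition $L^2(\R^4) = \bigoplus_{\ell \geq 0} \mathcal{V}_\ell$ into spherical-harmonic sectors one obtains $L_+ = \bigoplus_{\ell \geq 0} L_+^{(\ell)}$. Expanding $|x-y|^{-2}$ in $\R^4$ via Gegenbauer polynomials, the nonlocal term $2 Q \phi_{(Q u)}$ reduces on each $\mathcal{V}_\ell$ to a radial integral operator with kernel of the form $r_<^\ell / r_>^{\ell+2}$ (times an explicit $\ell$-dependent constant), while the Laplacian contributes the centrifugal term $\ell(\ell+2)/r^2$.

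For $\ell = 1$, the translation kernel $\partial_{x_j} Q = Q'(r)\, x_j/|x|$ is of pure angular momentum one, and a standard ODE analysis of $L_+^{(1)}$ (regularity at $r = 0$ together with $L^2$-decay) picks out a one-dimensional kernel per basis spherical harmonic, totaling the expected four-dimensional kernel. For $\ell \geq 2$, I would apply a strict monotonicity argument: the centrifugal term $\ell(\ell+2)/r^2$ is strictly increasing in $\ell$, whereas the attractive nonlocal kernel $r_<^\ell / r_>^{\ell+2}$ is strictly decreasing in $\ell$ for $r_< < r_>$. Combined with the fact that $\ker L_+^{(1)}$ sits at the bottom of the spectrum of $L_+^{(1)}$, this yields $L_+^{(\ell)} > 0$, hence trivial kernel, for all $\ell \geq 2$.

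The main obstacle is the radial sector $\ell = 0$, where neither ODE dimension counting nor monotonicity applies directly. To treat it I would use the relation $L_+ \Lambda Q = -2 Q$ from Lemma \ref{lem straight}: self-adjointness of $L_+$ yields, for any radial $\psi \in \ker L_+$, the orthogonality $(\psi, Q) = -\tfrac{1}{2}(\psi, L_+ \Lambda Q) = -\tfrac{1}{2}(L_+ \psi, \Lambda Q) = 0$. Combined with the variational characterization of $Q$ as the (unique up to symmetry) minimizer of the Weinstein-type functional in $d = 4$ worked out in \cite[Section 4]{KLR}, the quadratic form associated to $L_+^{(0)}$ is strictly positive on the radial complement of $\R Q$, so $\psi = 0$. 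In practice, the cleanest route is simply to cite the non-degeneracy already established in \cite{KLR} for this same Hartree ground state operator in four dimensions.
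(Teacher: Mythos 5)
Your route is essentially the paper's: $L_-$ by Perron--Frobenius, spherical-harmonic decomposition of $L_+$, the radial sector by citation of \cite{KLR}, the $\ell=1$ sector carrying $\partial_{x_j}Q$, and the sectors $\ell\ge 2$ killed by monotonicity in $\ell$. However, three steps do not hold as written. (i) For $\ell=1$, a ``standard ODE analysis'' via regularity at $r=0$ plus $L^2$-decay is unavailable: $L_{+,(1)}$ contains the nonlocal integral operator $W_{(1)}$, so there is no two-dimensional solution space to shoot from. Simplicity of the zero eigenvalue in this sector is itself a Perron--Frobenius statement: $L_{+,(1)}\ge 0$ because $\mathcal{H}_{(1)}\perp Q$ and $(L_+)_{|Q^\perp}\ge 0$, the function $-Q'>0$ is a sign-definite kernel element, and Perron--Frobenius forces it to span the (simple) ground-state eigenspace at eigenvalue $0$.

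(ii) More seriously, for $\ell\ge 2$ the pointwise monotonicity of the kernels ($\Gamma_{\ell}<\Gamma_{1}$, larger centrifugal term) does \emph{not} by itself yield the operator inequality $L_{+,(\ell)}>L_{+,(1)}$: a symmetric integral kernel of fixed pointwise sign need not define a sign-definite operator, so the difference $W_{(\ell)}-W_{(1)}$ cannot simply be declared positive. The correct comparison of lowest eigenvalues $e_\ell>e_1=0$ tests $L_{+,(1)}$ against the \emph{positive} ground state of $L_{+,(\ell)}$; only on a positive test function does the pointwise ordering of kernels translate into a strict inequality of quadratic forms. This is exactly why the paper establishes the Perron--Frobenius property in every sector $\ell\ge2$, using the positivity of the heat kernel \eqref{eqn:HeatKernel} and of the coefficients $\Gamma_\ell$ proved in Appendix~\ref{appendix:Nondegeneracy}; that computation is the technical core of the proof and is missing from your sketch. (iii) Your first suggestion for the radial sector is circular: $(\psi,Q)=0$ together with $(L_+)_{|Q^\perp}\ge 0$ only gives $(L_+\psi,\psi)=0$, and the ``strict positivity of the quadratic form on the radial complement of $\R Q$'' is essentially the radial non-degeneracy you are trying to prove. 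Your fallback --- quoting the radial result of \cite{KLR} --- is what the paper does and is the right move.
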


\begin{proof}
    We verify the claim for the local operator $L_-$ directly. The proof for $L_+$ follows the argument of Proposition 4 in \cite[Section 7]{LenzmannNondegen} with \cite[Theorem 4]{KLR} as a starting point, i.e. $\ker(L_+) = \{0\}$ on the radial sector. However, the changes in dimension have to be accounted for.
    We begin by decomposing 
    \[L^2(\R^4) = \bigoplus_{\el \geq 0} \mathcal{H}_{(\el)},\;\;\mathcal{H}_{(\el)} = L^2(\R_+, r^3\;\d r) \otimes \mathcal{Y}_{\el},\] in terms of spherical harmonics \cite{MullerSphericalHarmonics} where $\mathcal{Y}_{\el}$ is the $\el$-th eigenspace of $\Delta_{\mathbb{S}^3}$ on $L^2(\mathbb{S}^3)$. Thus we formally split the operator $L_{+}$ into
    \[ L_{+,(\el)} f (r) = -f''(r) -\frac{3}{r} f'(r) + \frac{\el(\el+2)}{r^2} f(r) + f(r)+ V(r) f(r) + W_{(\el)}f(r) ,\]
   which acts on $f \in L^2(\R_+, r^3 \d r)$. Here
    \[ V(r) = -\phi_{Q^2}(r) = (\abs{x}^{-2} \conv \abs{Q}^2)(r) \in L^{\infty},\]
    and
    \[ W_{(\el)}f (r) = -2\omega_{3} Q(r) \int_{s > 0} r_{>}^{-2} \Gamma_{\el}\left(\frac{r_<}{r_>}\right) Q(s) f(s) s^3 \d s, \]
    where $\omega_{3}$ is the Lebesgue measure of $\sphere^{3}$, $r_> = \max \{r,s\}$, $r_< = \min \{ r,s \}$ and $\Gamma_{\el}$ are the (suitably modified) coefficients of $(1+t^2-2t\eta)^{-1}$ in the basis $\{ P_{\el} \}_{\el \geq 0}$ of Legendre Polynomials,
    \begin{equation}\label{eqn:GammaCoefficients}
        \Gamma_{\el}(t) = \frac{2\el+1}{2(\el+1)^2} \int_{-1}^{1} \frac{1}{1+t^2-2t\eta} P_{\el}(\eta) \d \eta, \quad t \in (0,1) .
    \end{equation}
    We call $\Delta_{(\el)}$ the Laplacian restricted to $\mathcal{H}_{(\el)} $ and in fact
    \[ \Delta_{(\el)} = -\partial_r^2 - \frac{3}{r} \partial_r + \frac{\el(\el+2)}{r^2} .\]
    The case $\el=0$ (i.e. the restriction to radial functions) is proven in 
    \cite[Theorem 4]{KLR} and for $\el=1$, the fact that $L_+ \partial_{x_i} Q =0$ follows from a straightforward computation. The rest of the proof hinges on a Perron-Frobenius property of $L_{+,(\el)}$, namely the lowest eigenvalue is shown to be simple, positive and has a positive eigenfunction.  Thus it rules out the possibility that the kernel of $L_+$ may include a function different from $\partial_{x_i} Q $.\\[4pt] 
    Before we establish this property for $\el \geq 2$, we note that $L_{+, (\el)} \geq 0$ for $ \el \geq 1$ because $ \mathcal{H}_{(\el)} \perp Q$ and $(L_{+})_{|Q^{\perp}} \geq 0$. For the latter we use the adaption of Weinstein's argument for local NLS, see Proposition 2.7 in \cite{Weinstein2} and in fact the beginning of the proof of Lemma \ref{lem coercivity} below.\\[3pt]
    In order to show the Perron-Frobenius property of $L_{+,(\el)}$, we will need to change (7.13) in \cite{LenzmannNondegen} by
    \begin{equation}\label{eqn:HeatKernel}
        e^{t \Delta_{(\el)}}(r,s) = t^{-3/2} \frac{\omega_3}{(4\pi)^2} \cdot \frac{2\el+1}{2(\el+1)^2} e^{-\frac{r^2+s^2}{4t}} \sqrt{\frac{\pi}{rs}} I_{\el+ 1/2}\left(\frac{rs}{2t}\right),
    \end{equation}
    where $I_{k}(z)$ is the modified spherical Bessel function of the first kind of order $k$. A proof of identity \eqref{eqn:HeatKernel} can be found in Appendix~\ref{appendix:Nondegeneracy}.
    Lastly, we may conclude as in the proof of Proposition~4 in \cite[Section~7]{LenzmannNondegen}
    by exploiting the positivity and monotonicity of the coefficients $\Gamma_{\el}$, proved in Appendix~\ref{appendix:Nondegeneracy}.
\end{proof}
\;\\
Let us now show the coercivity of the linearized operators with orthogonality to elements in the generalized root space. This is a convenient choice of orthogonality, since the elements in Lemma \ref{lem straight} naturally appear by the symmetries in the calculation of the  error $S_j^{(N)}(t,x)$ in Section \ref{sec:approx}.\\[5pt]
The following Lemma  is essentially established by the non-degeneracy of the kernel of $L_{\pm}$. 

\begin{Lemma}[Coercivity of $L_{\pm}$] \label{lem coercivity}
For all real-valued functions $ v \in H^1$ we have 
\begin{equation} \begin{aligned}
    (L_+v, v) &\ge c \| v \|_{H^1}^2- C(v,Q)^2- C(v,xQ)^2- C(v,|x|^2 Q)^2, \\
    (L_-v, v) &\ge c \| v \|_{H^1}^2- C(v,\rho)^2,
\end{aligned} \end{equation}
where  $c, C > 0$ are positive constants (independent of $v$).
\end{Lemma}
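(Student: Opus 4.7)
The plan is to follow the Weinstein-type coercivity strategy adapted to the nonlocal Hartree setting. It combines the non-degeneracy $\ker L_- = \mathbb{R}Q$, $\ker L_+ = \mathrm{span}\{\partial_{x_j}Q\}$ from Lemma \ref{non-deg}, the Perron-Frobenius conclusion of its proof (one simple negative eigenvalue $-\mu^2$ for $L_+$ with positive radial eigenfunction $\chi$, and $L_{+,(\el)} \geq 0$ on all non-radial angular modes), and the algebraic identities collected in Lemma \ref{lem straight}.

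\emph{Bound for $L_-$.} Since $L_- \geq 0$ is self-adjoint with essential spectrum $[1,\infty)$ and $\ker L_- = \mathbb{R}Q$, spectral theory gives $(L_- v,v) \gtrsim \|v\|_{L^2}^2$ on $\{Q\}^\perp$. The identity $(L_- v,v) = \|\nabla v\|_{L^2}^2 + \|v\|_{L^2}^2 + (\phi_{Q^2}v,v)$ together with $\phi_{Q^2} \in L^\infty$ promotes this to $H^1$-coercivity on $\{Q\}^\perp$. For general $v$, write $v = \alpha Q + w$ with $w \perp Q$; since $(Q,\rho) = -\tfrac{1}{2}\|xQ\|_{L^2}^2 \neq 0$ by Lemma \ref{lem straight}, the relation $(v,\rho) = \alpha(Q,\rho) + (w,\rho)$ controls $\alpha^2 \lesssim (v,\rho)^2 + \|w\|_{L^2}^2$, and the claim follows.

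\emph{Bound for $L_+$.} Weinstein's argument first gives $L_+ \geq 0$ on $\{Q\}^\perp$. Any minimizer $w \perp Q$ with $(L_+w,w)<0$ satisfies $L_+w = \lambda w + \mu Q$ with $\lambda<0$; the case $\mu = 0$ forces $w \propto \chi \notin \{Q\}^\perp$ since $(\chi,Q)>0$ by positivity, and $\mu \neq 0$ leads to the contradiction $g(\lambda) := ((L_+-\lambda)^{-1}Q,Q) = 0$, where $g$ is strictly increasing on $(-\mu^2,0)$ (since $g'(\lambda) = \|(L_+-\lambda)^{-1}Q\|_{L^2}^2 > 0$) with boundary value $g(0) = (L_+^{-1}Q,Q) = -\tfrac{1}{2}(\Lambda Q,Q) = 0$ by $L^2$-criticality. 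Restricted to $\{Q\}^\perp$, the operator $L_+$ is now non-negative; the null space of the restricted quadratic form is $\mathrm{span}\{\partial_{x_j}Q,\Lambda Q\}$ (any $v \perp Q$ with $L_+v \in \mathbb{R}Q$ must lie there, using $L_+^{-1}Q = -\tfrac{1}{2}\Lambda Q$ modulo kernel). Since the essential spectrum is $[1,\infty)$, one obtains a spectral gap $(L_+v,v) \geq \delta\|v\|_{L^2}^2$ on the codimension-$6$ subspace $N := \{Q,\partial_{x_j}Q,\Lambda Q\}^\perp$.

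\emph{Transfer and conclusion.} Given $v$ with $(v,Q) = (v, x_j Q) = (v,|x|^2 Q) = 0$, decompose $v = v_0 + cQ + \sum_j b_j \partial_{x_j}Q + a\Lambda Q$ with $v_0 \in N$. Using $(\partial_{x_j}Q, x_k Q) = -\tfrac{1}{2}\delta_{jk}\|Q\|_{L^2}^2$, $(\Lambda Q,|x|^2 Q) = -\|xQ\|_{L^2}^2$ from Lemma \ref{lem straight} and the vanishings $(\Lambda Q, x_j Q) = (\partial_{x_j}Q, |x|^2 Q) = (Q, \Lambda Q) = (Q, \partial_{x_j}Q) = 0$ (by radial/odd symmetry or integration by parts), the three orthogonality conditions force $c = 0$ and $|a|+|b_j| \lesssim \|v_0\|_{L^2}$. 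The identities $L_+\partial_{x_j}Q = 0$, $L_+\Lambda Q = -2Q$, $(Q,\Lambda Q) = 0$ together with $v_0 \perp Q$ cancel all cross-terms and collapse the quadratic form to $(L_+v,v) = (L_+v_0,v_0) \geq \delta\|v_0\|_{L^2}^2 \gtrsim \|v\|_{L^2}^2$. The $H^1$-upgrade again uses that $\phi_{Q^2}$ and the nonlocal Hartree multiplier $f \mapsto Q\phi_{(\cdot Q)}f$ are bounded on $L^2$ (the latter via Hardy-Littlewood-Sobolev and the exponential decay of $Q$), and the passage to general $v$ mirrors the $L_-$ argument by paying the three defects $(v,Q)^2, (v,xQ)^2, (v,|x|^2Q)^2$. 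The main obstacle throughout is the spectral picture of the nonlocal $L_+$, in particular the isolation of its unique negative eigenvalue in the radial sector; this is precisely the content of Lemma \ref{non-deg}, after which the present coercivity is a routine consequence of the algebraic structure of Lemma \ref{lem straight}.
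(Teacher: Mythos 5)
Your argument is correct in substance but proceeds by a genuinely different route from the paper's. The paper first gets $(L_+f,f)\ge 0$ for $f\perp Q$ from the second variation of the Weinstein functional $\J$ at its minimizer $Q$, and then proves the constrained coercivity by contradiction: a minimizing sequence $f_n$ with $\|f_n\|_{H^1}=1$ and $(L_+f_n,f_n)\to 0$ is shown, via Rellich--Kondrachov and the decay of $Q$, to have a \emph{nonzero} weak limit $f_0$ satisfying the Euler--Lagrange equation $L_+f_0=aQ$; Lemma \ref{non-deg} then places $f_0$ in $\mathrm{span}\{\Lambda Q,\nabla Q\}$ and the orthogonality to $xQ$, $|x|^2Q$ kills it. You instead identify the kernel of the projected operator $PL_+P$ on $Q^\perp$ as $\mathrm{span}\{\partial_{x_j}Q,\Lambda Q\}$ (same use of Lemma \ref{non-deg}), invoke isolation of eigenvalues below the essential spectrum $[1,\infty)$ to get a gap $\delta>0$ on $N=\{Q,\partial_{x_j}Q,\Lambda Q\}^\perp$, and then do an explicit orthogonal decomposition whose cross-terms cancel by $L_+\partial_{x_j}Q=0$, $L_+\Lambda Q=-2Q$, $(Q,\Lambda Q)=0$; the coefficients $a,b_j$ are controlled by $\|v_0\|_{L^2}$ through the non-degenerate pairings of Lemma \ref{lem straight}(iii). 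Your transfer step is correct (I checked the cancellations and the Gram relations), as is the $H^1$-upgrade via boundedness of $\phi_{Q^2}$ and of $f\mapsto Q\phi_{fQ}$ on $L^2$. What your route buys is the complete avoidance of the weak-convergence/compactness machinery, at the price of slightly more abstract spectral theory; the $L_-$ half you give is the standard spectral-gap argument, which the paper omits as ``similar and easier.''

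One soft spot worth flagging: your derivation of $L_+\ge 0$ on $\{Q\}^\perp$ via the resolvent function $g(\lambda)=((L_+-\lambda)^{-1}Q,Q)$ presupposes that $L_+$ has exactly one, simple, negative eigenvalue (otherwise $g$ is not defined and monotone on all of $(-\mu^2,0)$, and the constrained minimum need not lie in that window). The Perron--Frobenius analysis in the proof of Lemma \ref{non-deg} gives simplicity of the ground state in the radial sector, but the non-negativity of the sectors $\el\ge 1$ is itself deduced there \emph{from} $(L_+)|_{Q^\perp}\ge 0$, so leaning on that analysis here is circular unless you establish $L_{+,(\el)}\ge 0$ for $\el\ge 1$ independently. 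The clean fix is the one the paper uses: obtain $(L_+f,f)\ge 0$ for $f\perp Q$ directly from $\tfrac12\tfrac{\d^2}{\d\epsilon^2}\big|_{\epsilon=0}\J(Q+\epsilon f)=(L_+f,f)$ and the minimality of $\J(Q)$; everything downstream in your argument then goes through unchanged.
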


\begin{proof}
It suffices to prove that for some $c>0$, if $v \in H^1$, then 
\begin{equation} \label{eq coercivity of L+, L-} \begin{split}
    &(v,Q)= (v,xQ)= (v,|x|^2Q)= 0 \implies (L_+v,v) \ge c \Vert v \Vert_{H^1}^2, \\[3pt]
    &(v, \rho)= 0 \implies (L_-v,v) \ge c \Vert v \Vert_{H^1}^2.
\end{split} \end{equation} 
We only prove the first line of \eqref{eq coercivity of L+, L-}, as the proof of the second line is similar. 
\noindent
Recall that $Q$ is a minimizer of 
\begin{equation}
    \mathcal{E}(u)= \frac{1}{2} \int |\nabla u|^2\;\d x- \frac{1}{16\pi^2} \int |\nabla \phi_{|u|^2} |^2\;\d x, \quad \text{where } u \in H^1 \text{ and } \Vert u \Vert_{L^2}= \Vert Q \Vert_{L^2}.
\end{equation}
Thus $Q$ is a minimizer in $H^1 \backslash \{0\}$ of
\begin{equation}
    \J(u):= \frac{\Vert Q \Vert_{L^2}^2}{\Vert u \Vert_{L^2}^2} \int |\nabla u|^2\;\d x- \frac{\Vert Q \Vert_{L^2}^4}{8\pi^2\Vert u \Vert_{L^2}^4} \int |\nabla \phi_{|u|^2} |^2\;\d x.
\end{equation}
Assume $f \in H^1$ and $(f,Q)=0$. By direct computation, we have
\begin{equation} \begin{aligned}
    \frac{1}{2} \frac{\d^2}{\d \epsilon^2} \bigg|_{\epsilon=0} \J(Q+ \epsilon f) = & \int |\nabla f|^2\;\d x - \frac{\Vert \nabla Q \Vert_{L^2}^2}{\Vert Q \Vert_{L^2}^2} \int f^2\;\d x- \frac{1}{2\pi^2} \int |\nabla \phi_{Qf}|^2\;\d x  \\
    &- \frac{1}{4\pi^2} \int \nabla \phi_{Q^2} \cdot \nabla \phi_{f^2}\;\d x + \frac{\Vert \nabla \phi_{Q^2} \Vert_{L^2}^2}{4\pi^2 \Vert Q \Vert_{L^2}^2} \int f^2\;\d x.
\end{aligned} \end{equation}
Using the equation of $Q$ and integration by parts, we get
\begin{equation}
    \Vert \nabla Q \Vert_{L^2}^2- \frac{1}{4\pi^2} \Vert \nabla \phi_{Q^2} \Vert_{L^2}^2+ \Vert Q \Vert_{L^2}^2=0.
\end{equation}
Thus we obtain
\begin{equation}
    \frac{1}{2} \frac{\d^2}{\d \epsilon^2} \bigg|_{\epsilon=0} \J(Q+ \epsilon f) = (L_+f,f).
\end{equation}
By the minimality of $\J(Q)$, we deduce that 
\begin{equation}
    f \in H^1,\ (f,Q)=0 \implies (L_+f,f) \ge 0. 
\end{equation}

Therefore, if the conclusion fails to be true, then there exist $f_n \in H^1$ such that $(f_n,Q)= (f_n,xQ)= (f_n, |x|^2Q)= 0$, $\Vert f_n \Vert_{H^1}=1$ and $(L_+ f_n, f_n) \to 0$. By passing to subsequence, we may assume $f_n \rightharpoonup f_0$ in $H^1$. We have $(f_0,Q)= (f_0,xQ)= (f_0, |x|^2Q)=0$, and by the Rellich-Kondrachov theorem and the decay of $Q$, we have
\begin{equation}
    \int \phi_{Q^2} f_n^2\;\d x  \to \int \phi_{Q^2} f_0^2\;\d x \quad \text{and} \quad \int |\nabla \phi_{Qf_n}|^2\;\d x  \to \int |\nabla \phi_{Qf_0}|^2\;\d x .
\end{equation}
On the other hand, $\Vert f_0 \Vert_{H^1} \le \liminf \Vert f_n \Vert_{H^1}$. We deduce that $(L_+f_0, f_0) \le 0$.

By the non-negativity of $L_+$ on $Q^\perp$, we know that $f_0$ is a nonzero minimizer of $(L_+u,u)$, where $u \in H^1$ and $(u,Q)=0$. By computing the first variation, we get
\begin{equation}
    L_+f_0= aQ \quad \text{for some } a \in \mathbb{R}.
\end{equation}
Since $L_+(\Lambda Q)= -2Q$ and by Lemma \ref{non-deg}  $\ker L_+$ is spanned by $\nabla Q$, we now know $f_0$ is a linear combination of $\Lambda Q$ and $\nabla Q$. But using $(f_0, xQ)= (f_0, |x|^2 Q)= 0$, we must have $f_0=0$, which is a contradiction. \footnote{Note that  $(f_0,Q)=0$ is not helpful since $(\Lambda Q, Q)=0$ does hold. This explains why we have $(v,|x|^2 Q)$ on the right-hand side in the lemma.}
\end{proof}

\begin{Lemma}[Inversion of $L_{\pm}$] \label{Inversion-of-L}
Let $f$ be a real-valued with exponential decay as $|x| \to +\infty$.
\begin{enumerate}
    \item If $\langle f, \nabla Q \rangle = 0$, there exists a real-valued exponentially decaying solution to $ L_+u = f$.
    \item If $\langle f, Q \rangle = 0$,  there exists a real-valued exponentially decaying  solution  to $ L_-u = f$. Further if $ f$ is radial, then $u$ can be chosen radial.
\end{enumerate}
\end{Lemma}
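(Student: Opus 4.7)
The plan is to combine a Fredholm-type existence argument (using the non-degeneracy and coercivity of $L_{\pm}$ established above) with a decay/regularity bootstrap that promotes the $L^2$ solution to an admissible function of degree $n$. The natural decomposition is: first solve in $L^2$, then upgrade to exponential pointwise decay with the correct polynomial prefactor.

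\textbf{Step 1 (Solvability in $L^2$).} By Lemma \ref{non-deg} the kernel of $L_+$ on $L^2(\R^4)$ is $\mathrm{span}\{\partial_{x_1}Q,\dots,\partial_{x_4}Q\}$ and the kernel of $L_-$ is $\mathrm{span}\{Q\}$. Writing $L_{\pm}=-\Delta+1+V_{\pm}$ with $V_-=\phi_{Q^2}$ and $V_+=\phi_{Q^2}+2Q\phi_{(\,\cdot\,Q)}$, both potentials define relatively compact self-adjoint perturbations of $-\Delta+1$ on $H^2(\R^4)$ (the nonlocal piece being compact via Hardy--Littlewood--Sobolev and the rapid decay of $Q$). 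The orthogonality hypotheses $\langle f,\nabla Q\rangle=0$, respectively $\langle f,Q\rangle=0$, place $f$ in the range $(\ker L_{\pm})^{\perp}$, and Lemma \ref{lem coercivity} then supplies a bounded inverse there. I therefore fix a unique $u\in H^2(\R^4)$ with $L_{\pm}u=f$ and $u\perp\ker L_{\pm}$, and use elliptic regularity in the equation $(-\Delta+1)u=f-V_{\pm}u$ to bootstrap $u\in H^k$ for every $k$.

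\textbf{Step 2 (Decay and polynomial structure).} An admissible function of degree $n$ is, up to the conventions of the paper, a finite linear combination $\sum_{|\alpha|\le n}P_{\alpha}(x)\,Q^{(\alpha)}(x)$ with polynomial coefficients of degree $\le n$ and hence exponential decay at infinity. Since $V_{\pm}\to 0$ exponentially (each of $\phi_{Q^2}$, $Q\phi_{(\,\cdot\,Q)}$ inherits exponential decay from $Q$ after splitting the convolution into near- and far-field pieces), outside a large ball $L_{\pm}$ is a Schr\"odinger operator with mass $1$ and exponentially small potential. Standard Agmon-type weighted estimates, applied to $(-\Delta+1)u=f-V_{\pm}u$ with weight $e^{c|x|}$ for $c<1$, give $u\in L^2(e^{c|x|}\,dx)$ and, upon elliptic bootstrap, pointwise bounds $|\partial^{\gamma}u(x)|\lesssim(1+|x|)^{n}e^{-c|x|}$. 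Tracking the precise polynomial degree requires decomposing $u$ into spherical harmonics on $\mathbb{S}^3$ and solving the resulting radial ODEs against $f$ mode by mode: at each angular level the Green's function of $-\Delta_{(\el)}+1$ produces at most one additional polynomial factor in $r$, which is absorbed by the degree count thanks to the orthogonality assumption.

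\textbf{Step 3 (Radial refinement for $L_-$).} The operator $L_-=-\Delta+1+\phi_{Q^2}$ is rotationally invariant because $\phi_{Q^2}$ is radial. If $f$ is radial, one may therefore carry out Step~1 inside $L^2_{\mathrm{rad}}(\R^4)$, where $\ker L_-=\mathrm{span}\{Q\}$ is still spanned by $Q$; the resulting $u\in L^2_{\mathrm{rad}}$ remains radial, and Step~2 then produces a radial admissible solution of degree $n$.

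\textbf{Main obstacle.} The delicate point is the non-local term $2Q\phi_{(\,\cdot\,u)}$ inside $L_+$: unlike a local potential, this operator couples angular sectors and can, a priori, inflate polynomial factors in $u$. The saving grace is that this term is dressed by $Q$ on the outside and by $Q$ under the convolution, so its kernel decays exponentially in both variables; decomposing $\phi_{(\,\cdot\,Q)}$ via the coefficients $\Gamma_{\el}$ from \eqref{eqn:GammaCoefficients} and exploiting their positivity/monotonicity lets me treat this term as a perturbation when iterating Agmon, keeping the degree of $u$ bounded by that of $f$.
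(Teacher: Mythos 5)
Your overall plan — Fredholm solvability of $L_{\pm}$ on the orthogonal complement of the kernel followed by Agmon-type weighted estimates for exponential decay, with rotational invariance of $L_-$ giving the radial refinement — is the route the paper takes: the paper phrases solvability as invertibility of $L_+(-\Delta+I)^{-1}$ on $\mathrm{span}(\nabla Q)^{\perp}$ via Lemma \ref{non-deg}, then writes $u$ through the Bessel-type kernel $G$ of $(-\Delta+I)^{-1}$ and runs a fixed-point/Agmon argument for the decay. So your Steps 1 and 3 are essentially the paper's proof.

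There is, however, a genuine gap in Step 2: you have misread the paper's notion of ``admissible function of degree $n$.'' Here $n$ measures the homogeneity weight of the \emph{parameter} factor $\sigma(\alpha,\beta,\lambda,\mu,\delta)\in S_n$; the \emph{spatial} factor $\tau(x)$ is only required to satisfy $|\nabla^k\tau|\lesssim e^{-c_k|x|}$ for all $k$, with no polynomial-degree bookkeeping whatsoever, and is certainly not of the form $\sum_{|\alpha|\le n}P_{\alpha}(x)\,Q^{(\alpha)}(x)$. Since $L_{\pm}$ act only in $x$, the factor $\sigma$ passes through $L_{\pm}^{-1}$ unchanged, so the degree is preserved automatically; the only content of the lemma is that an exponentially decaying $\tau$ satisfying the orthogonality can be solved to an exponentially decaying $\tilde\tau$. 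Your mode-by-mode spherical-harmonic decomposition to ``track the polynomial degree'' addresses a problem that does not arise, and the claim that the angular Green's function produces an extra polynomial factor in $r$ is in any case not how exponential decay propagates — convolving an exponentially decaying $f$ with the exponentially decaying kernel $G$ gives back exponential decay, as the paper's splitting of \eqref{int-inv-L} shows. A smaller point: Lemma \ref{lem coercivity} is not quite the right tool to invert on $\ker(L_{\pm})^{\perp}$, since its orthogonality conditions $(v,Q)=(v,xQ)=(v,|x|^2Q)=0$ (resp.\ $(v,\rho)=0$) are not $v\perp\ker L_{\pm}$; self-adjointness plus isolation of $0$ in the spectrum (equivalently, the paper's Fredholm argument using only Lemma \ref{non-deg}) is what closes Step 1.
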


\begin{proof}
The proof of $(1), (2) $ is a straightforward adaption of the argument in \cite{KRM}, with only slight differences. Writing
\[
L_+ = \big[ I + (\phi_{Q^2} + Q 
\phi_{(\cdot Q)}) \circ (- \Delta + I)^{-1}\big] (- \Delta + I),
\]
we know $ L_+ (- \Delta + I)^{-1} - I $ is a compact operator  mapping  $\mathcal{H} := \
\text{span} (\nabla Q)^{\perp}$ into itself and has the adjoint 
\[
\Pi \circ\big[ (- \Delta + I)^{-1} L_+ -I \big],
\]
where $ \Pi $ is the orthogonal projection onto $ \mathcal{H}$. A simple bootstrap argument shows that any solution  to 
\[
v + \Pi \big[ (- \Delta + I)^{-1} L_+v -v \big] = 0
\]
is smooth and then obviously  
$$  L_+v = (- \Delta + I) [ a \cdot \nabla Q]$$
for some $ a \in \R^4$. Integrating over the product with  $a \cdot \nabla Q $ and using Lemma \ref{non-deg}, we have $v = 0$. This implies that the Fredholm operator
\[
L_+ (- \Delta + I)^{-1}  =  I + (L_+ (- \Delta + I)^{-1} - I)
\]
is invertible. Now if $ L_+ u =f $ as in (1), we show exponential decay following \cite{KRM}, which is standard argument due to Agmon (see the reference in \cite{KRM}). Precisely, if we use the kernel $ (- \Delta + I)^{-1}(x,y) = G(|x-y|)$, we can write 
\begin{align} \label{int-inv-L}
u(x) =   \int_{\R^4 } G(|x-y|) f(y) \;\d y - \int_{\R^4} G(|x-y|) \big[ \phi_{Q^2} u + 2 Q \phi_{u Q}\big](y)\;\d y,
\end{align}
where $ G(r) $ with $ r > 0$ is a real-valued smooth function such that for some constants $c_1, c_2 > 0$ 
\[
G(r) = \frac{c_1}{r^2} e^{-r},\;\;r \gg 1,\;\;\;\;
G(r) =     \frac{c_2}{r^2} + O(1), \;\;r \ll 1.
\]
\begin{Rem} In fact it is well known that with some $\beta \in \R$ we have  
$$ |x-y| \cdot G(|x-y|) = \beta  H_1^{(1)}(i |x-y|),$$
where $ H_1^{(1)}$ is the first  Hankel function of  order one.
\end{Rem}
Bootstrapping \eqref{int-inv-L}, we infer $u$ to be in $L^{\infty}(\R^4)$ by Sobolev's embedding. Further, assuming $ |f(y) | \lesssim e^{- c |y|}$ for some $ 0 < c < 1 $, we chose any $ \delta < \frac12 c$ and $ L \gg1 $ large enough such that 
\[
 (\sup_{|y| > L} \phi_{Q^2}(y)) \| e^{\delta |x|} G(|x|)  \|_{L^1(\R^4)} < \frac12. 
\]
Splitting the second integral in \eqref{int-inv-L} via $\{ |y| > L \}$ and $ \{ |y| \leq L \}$, this directly implies a pointwise estimate for any $M > 0$
\[
\min\{ M, e^{\delta |x|}\} |u(x)| \leq \f12 \sup_{y \in \R^4}\big(\min\{M, e^{i |y|}|u(y)|\big) + C_0,
\]
where $ C_0$ depends on $u$ but not on $ M > 0$. Taking the supremum on the left and letting $M \to \infty$ concludes the argument.
\end{proof}
\section{Construction of approximate solutions}\label{sec:approx}
\noindent
We now introduce an approximation scheme by modified arguments in \cite{Wu} (see also \cite{KRM}).\\[4pt]
Let $\alpha= (\alpha_1, \cdots, \alpha_m)$ and similarly for $(\beta, \lambda, \mu, \delta, \gamma)$, which are (possibly) time dependent parameters. We further denote 
\begin{equation} \label{eq notation} \begin{split}
    &P= (\alpha, \beta, \lambda, \mu, \delta), \quad g=(P,\gamma), \quad g_j = (P_j, \gamma_j) =  (\alpha_j, \beta_j, \lambda_j, \mu_j, \delta_j, \gamma_j),\\[3pt]
    &\alpha_{jk}= \alpha_j- \alpha_k, \quad \beta_{jk}= \beta_j- \beta_k, \quad a= \min_{j \neq k} |\alpha_{jk}|.
\end{split} \end{equation}
For a function $v : \R \times \R^4 \to \C$ we then modulate via the path $g_j =(P_j, \gamma_j)$ as follows
\begin{equation} \label{eq action}
    g_j v(t,x)  := \lambda_j^2 v \big(t, \lambda_j (x-\alpha_j) \big) e^{i\gamma_j+ i\beta_j \cdot x+ i \mu_j |x|^2}.
\end{equation}
We note here the parameters $(\alpha_j, \beta_j, \lambda_j, \gamma_j)$ correspond to the symmetry \eqref{inv} as in \cite{Wu}, however the additional parameters $(\mu_j, \delta_j)$ are due to the pseudo-conformal symmetry (c.f.  \eqref{pseudo}) of \eqref{H}. In particular they guarantee extended orthogonality conditions in Lemma \ref{lem orthogonality} of Section \ref{sec:bootstrap}  and hence the coercive control in Lemma \ref{lem coercivity}.\\[4pt]
The parameter $\delta_j$ corresponds to $ L_-\rho = -|x|^2Q $ in Lemma \ref{lem straight} and is used (contrary to \cite{KRM, Wu}) to model the approximation on $Q(x) + \delta_j \rho(x)$ via 
\begin{align}
   V_j =  Q(x) + \delta_j \rho(x) + W_j.
\end{align}
where $W_j$ are correction terms which are homogeneous in the modulation parameters. This modified ansatz guarantees the existence of our correction terms in Proposition \ref{prop construction of approximate bubbles} below, see the second line of \eqref{this-othog}. We hence stress, contrary to \cite{Wu} and due to the orthogonality $(Q, \Lambda Q) = 0$ in our case, we rely on this modification to guarantee the existence of the following approximation.\\[4pt] 
For $u = g_j v$ we now have (let us omit the subscript $j = 1, \dots, m$) the following calculations
\begin{align} \label{eq time deriv}
    e^{-i\gamma- i\beta \cdot x- i \mu |x|^2} i\partial_t u &= 2i \lambda \dot{\lambda} v+ i\lambda^2 \partial_t v+ i\lambda^2 \big( \dot{\lambda} (x-\alpha)- \lambda \dot{\alpha} \big) \nabla v \\
    &\quad \;- \lambda^2 \big( \dot{\gamma}+ \dot{\beta} \cdot (x-\alpha)+ \dot{\beta} \cdot \alpha+ \dot{\mu} |x|^2 \big) v, \\[3pt]
    e^{-i\gamma- i\beta \cdot x- i \mu |x|^2} \nabla u &= \lambda^2 \big( \lambda \nabla v+ i(\beta+ 2\mu x) v \big), \\[3pt]
    e^{-i\gamma- i\beta \cdot x- i \mu |x|^2} \Delta u &= \lambda^4 \Delta v+ 2i \lambda^3 (\beta+ 2\mu x) \cdot \nabla v+ 8i\lambda^2 \mu v\\
    & \quad \;- \lambda^2 \big( |\beta|^2+ 4\mu \beta \cdot x+ 4\mu^2|x|^2 \big) v,
\end{align}
as well as for the nonlinearity
\begin{align} \label{eq nonlinearity}
    e^{-i\gamma- i\beta \cdot x- i \mu |x|^2} \phi_{|u|^2} u = \lambda^4 \phi_{|v|^2} v.
    \end{align}
Now if $v_j$  denotes any function depending on the parameters $(\alpha_j, \beta_j, \lambda_j, \mu_j, \delta_j)$ and the spatial variable, we set
\begin{equation}
    u(t,x) := \sum_{j=1}^m u_j(t,x) := \sum_{j=1}^m g_jv_j(t,x)
\end{equation}
and  by \eqref{eq time deriv}, \eqref{eq nonlinearity} observe the following expression
\begin{equation} \label{eq expression}
    i\partial_t u+ \Delta u- \phi_{|u|^2}u= \sum_{j=1}^m E_j(t, y_j) e^{i\gamma+ i\beta \cdot x+ i \mu |x|^2}- \sum_{k \neq j} \phi_{\re (u_k \overline{u_j})} u,
\end{equation}
where $y_j= \lambda_j (x-\alpha_j)$. We note the rightmost term on the right-hand side of \eqref{eq expression} will be small by separation of the solitons and we identify $E_j(t, y_j)$ to be the main error. In particular, setting $\Lambda v_j= 2v_j+ y_j \cdot \nabla v_j$, we have 
\begin{align} \label{calc-E_j}
    E_j(t, y_j) &=  \lambda_j^4 \big( \Delta v_j- v_j- \delta_j |y_j|^2 v_j \big) \\[2pt]
    &\quad - \big( \dot{\mu}_j+ 4\mu_j^2- \lambda_j^4 \delta_j \big) |y_j|^2 v_j- \lambda_j \big( \dot{\beta}_j+ 4\mu_j \beta_j+ (\dot{\mu}_j+ 4\mu_j^2) \alpha_j \big) y_j v_j \\[2pt]
    &\quad - \lambda_j^2 \big( \dot{\gamma}_j+ (\dot{\beta}_j+ 4\mu_j \beta_j) \cdot \alpha_j+ (\dot{\mu}_j+ 4\mu_j^2) |\alpha_j|^2+ |\beta_j|^2- \lambda_j^2 \big) v_j \\[2pt]
    &\quad +i\lambda_j \big( \dot{\lambda}_j+ 4\lambda_j \mu_j \big) \Lambda v_j- i\lambda_j^3 \big( \dot{\alpha}_j- 2\beta_j- 4\mu_j \alpha_j \big) \nabla v_j \\[2pt]
    &\quad+ i \lambda_j^2 \sum_{k=1}^m \Big( \frac{\partial v_j}{\partial \alpha_k} \dot{\alpha}_k+ \frac{\partial v_j}{\partial \beta_k} \dot{\beta}_k+ \frac{\partial v_j}{\partial \lambda_k} \dot{\lambda}_k+ \frac{\partial v_j}{\partial \mu_k} \dot{\mu}_k+ \frac{\partial v_j}{\partial \delta_k} \dot{\delta}_k \Big) \\[2pt]
    &\quad - \lambda_j^4 \bigg[ \phi_{|v_j|^2}+ \sum_{k \neq j} \Big( \frac{\lambda_k}{\lambda_j} \Big)^2 \phi_{|v_k|^2} \Big( \frac{\lambda_k y_j}{\lambda_j}+ \lambda_k \alpha_{jk} \Big) \bigg] v_j. 
\end{align} 
\;\\
For the error term in the last line we write
\begin{equation} \label{this-term}
    \left( \frac{\lambda_k}{\lambda_j} \right)^2 \phi_{|v_k|^2} \Big( P(t), \frac{\lambda_k y_j}{\lambda_j}+ \lambda_k \alpha_{jk} \Big)= -\frac{1}{\lambda_j^2} \int_{\mathbb{R}^4} \frac{|v_k(P(t),\xi)|^2}{|\alpha_{jk}+ \lambda_j^{-1} y_j- \lambda_k^{-1} \xi|^2} \d \xi,
\end{equation}
and we consider the Taylor expansion
\begin{equation} \label{taylor}
    \frac{1}{|\alpha-\zeta|^2}= \sum_{n=1}^N F_n(\alpha,\zeta)+ O \left( \frac{|\zeta|^N}{|\alpha|^{N+2}} \right) \quad \text{as } \zeta \to 0,
\end{equation}
where $F_n(\alpha,\zeta)$ is homogeneous of degree $-n-1$ in $\alpha$ and of degree $n-1$ in $\zeta$. Let us calculate the first few terms explicitly:
\begin{equation} \begin{aligned}
    F_1(\alpha, \zeta) &= |\alpha|^{-2},\;\;F_2(\alpha, \zeta) = |\alpha|^{-4} 2(\zeta \cdot \alpha), \\[3pt]
    F_3(\alpha, \zeta) &= |\alpha|^{-6} \big( 4(\zeta \cdot \alpha)^2- |\zeta|^2 |\alpha|^2 \big), \\[3pt]
    F_4(\alpha, \zeta) &= |\alpha|^{-8} \big( 8(\zeta \cdot \alpha)^3- 4(\zeta \cdot \alpha) |\zeta|^2 |\alpha|^2 \big), \\[3pt]
    F_5(\alpha, \zeta) &= |\alpha|^{-10} \big( 16 (\zeta \cdot \alpha)^4- 12 (\zeta \cdot \alpha)^2 |\zeta|^2 |\alpha|^2+ |\zeta|^4 |\alpha|^4 \big).
\end{aligned} \end{equation} 
We now replace the convolution  kernel in \eqref{this-term} by \eqref{taylor} and define the following approximation
\begin{equation} \begin{aligned} \label{approx-nonlinearity}
    \phi_{|v_k|^2}^{(N)}(t,y_j) &:= \sum_{n=1}^N \psi_{|v_k|^2}^{(n)}(t,y_j) := \sum_{n=1}^N -\frac{1}{\lambda_j^2} \int_{\mathbb{R}^4} |v_k(t,\xi)|^2 F_n \big( \alpha_{jk}, \lambda_k^{-1} \xi- \lambda_j^{-1} y_j\big) \d \xi.
\end{aligned} \end{equation}
Note that if $v_k$ is homogeneous in the parameters of $P(t)$, then so is $\psi_{|v_k|^2}^{(n)}(t,y_j)$. Further, for \eqref{eq expression} - \eqref{calc-E_j}, we make the ansatz  $v_j(P(t), y_j) = V_j^{(N)}(P(t), y_j) $ with
\begin{align} \label{eq V_j^N}
     V_j^{(N)}(P(t), y_j) := Q(y_j)+ \delta_j \rho(y_j)+ W_j^{(N)}(\alpha_j, \beta_j, \lambda_j, \mu_j, \delta_j, y_j),
\end{align}
where the function $\rho(y_j)$ is as in Lemma \ref{lem straight}.  Moreover  $W_j^{(N)}$ has the form
\[
W_j^{(N)}(P(t), y_j) = \sum_{n =1}^N T_{j}^{(n)}(P(t), y_j)
\]
where $T_{j}^{(n)}$ decays exponentially in $y_j$ and enjoys a certain homogeneity property of degree $n$ in the modulation parameters. See Definition  \ref{def admissible} and Proposition \ref{prop construction of approximate bubbles} below. Let us now define
\begin{equation} \label{eq approximate solution}
    R_g^{(N)}(t,x):= \sum_{j=1}^m R_{j,g}^{(N)}(t,x):= \sum_{j=1}^m g_j V_j^{(N)}(P(t),x) ,
\end{equation}
for which we will omit the subscript $g$ of $R^{(N)}$ if convenient. As above, we write
\begin{equation} \label{eq equation of approximate solution} \begin{aligned}
    &i\partial_t R^{(N)}+ \Delta R^{(N)}- \phi_{|R^{(N)}|^2} R^{(N)} \\[3pt]
    =\;&\sum_{j=1}^m E_j^{(N)}(t,y_j) e^{i\gamma_j+ i\beta_j \cdot x +i\mu_j|x|^2}- \sum_{k \neq j} \phi_{\mathrm{Re} (R_k^{(N)} \overline{R_j^{(N)}})} R^{(N)} \\[3pt]
    &+ \sum_{j=1}^m \lambda_j^4 \sum_{k \neq j} \bigg[ \phi_{\left| V_k^{(N)} \right|^2}^{(N)}- \Big( \frac{\lambda_k}{\lambda_j} \Big)^2 \phi_{ \left| V_k^{(N)} \right|^2} \Big( \frac{\lambda_k y_j}{\lambda_j}+ \lambda_k \alpha_{jk} \Big) \bigg] V_j^{(N)} e^{i\gamma_j+ i\beta_j \cdot x+i\mu_j|x|^2},
\end{aligned} \end{equation} 
where the latter line is the error from the approximation \eqref{approx-nonlinearity}. 
Considering \eqref{calc-E_j}, we now split 
$
E_j^{(N)}= \tilde{E}_j^{(N)}+ S_j^{(N)}
$
into the \emph{interaction part} and the \emph{modulation part} of the error. In principle the former error should consist of the terms
\begin{align} \label{interact-error}
    &\lambda_j^4 \Big( \Delta V_j^{(N)}- V_j^{(N)}- \phi_{\left| V_j^{(N)} \right|^2} V_j^{(N)}- \delta_j |y_j|^2 V_j^{(N)} \Big)- \lambda_j^4 \sum_{k \neq j} \phi_{\left| V_k^{(N)} \right|^2}^{(N)} V_j^{(N)} \\
    &+ i\lambda_j^2 \sum_{k=1}^m \bigg( \frac{\partial W_j^{(N)}}{\partial \alpha_k} \dot{\alpha}_k+ \frac{\partial W_j^{(N)}}{\partial \beta_k} \dot{\beta}_k + \frac{\partial W_j^{(N)}}{\partial \lambda_k} \dot{\lambda}_k+ \frac{\partial W_j^{(N)}}{\partial \mu_k} \dot{\mu}_k + \frac{\partial W_j^{(N)}}{\partial \delta_k} \dot{\delta}_k \bigg) 
\end{align}
However, we now show how a more precise dependence of $W_j$ on $P(t)$ improves the decay of these terms.  This is achieved by removal of slowly decaying terms otherwise appearing in the interaction part
\[
\lambda_j^4 \sum_{k \neq j} \phi_{\left| V_k^{(N)} \right|^2}^{(N)} V_j^{(N)}.
\]
In order to outline the idea, let us note three observations.\\[5pt]
(1)\; In the second line of \eqref{interact-error} we expect to replace $\dot{\alpha}_k(t), \dot{\beta}_k(t), \dot{\lambda}_k(t), \dot{\mu}_k(t), \dot{\delta}_k(t)$ with the modulation equations in the second to fourth line of \eqref{calc-E_j} since we aim to control these terms in the modulation analysis (see Section \ref{sec:traje}).\\[5pt]
(2)\; Likewise, considering the same modulation part in \eqref{calc-E_j}, we \emph{modify each of these equations by adding correction terms} for $(\delta_j, \beta_j, \lambda_j)$ of the form
    \begin{align} \label{corrections-first}
    i\lambda_j^2 D_j^{(N)} \cdot \rho,\;\;\lambda_j B_j^{(N)} \cdot y_j V_j^{(N)},\;\; i\lambda_j M_j^{(N)} \cdot \Lambda V_j^{(N)},
    \end{align}
    where $D_j^{(N)}, B_j^{(N)}, M_j^{(N)} $ are sums of functions homogeneous in the respective parameters.\\[5pt]
    (3)\; We linearize the first line of \eqref{interact-error} at $Q$ and construct $T_j^{(n)}$ by inversion of $L_{\pm}$. Here $D_j^{(N)}$ and $B_j^{(N)}$  are essential in order to apply Lemma \ref{Inversion-of-L} (see also Lemma \ref{Inversion-of-L-admissible}) and $M_j^{(N)}$ will be essential in the next Section \ref{sec:traje}.\\[6pt]
In particular we make the following ansatz 
\begin{align} \nonumber
    \tilde{E}_j^{(N)}(t,y_j) =& \;\lambda_j^4 \Big( \Delta V_j^{(N)}- V_j^{(N)}- \phi_{\left| V_j^{(N)} \right|^2} V_j^{(N)}- \delta_j |y_j|^2 V_j^{(N)} \Big)- \lambda_j^4 \sum_{k \neq j} \phi_{\left| V_k^{(N)} \right|^2}^{(N)} V_j^{(N)} \\[3pt] \label{eq definition of E_j tilde} 
    &\;+ i\lambda_j^2 D_j^{(N)} \rho- \lambda_j B_j^{(N)} \cdot y_j V_j^{(N)}+ i\lambda_j M_j^{(N)} \Lambda V_j^{(N)} \\[3pt] \nonumber
    &\;+ i\lambda_j^2 \sum_{k=1}^m \bigg( \frac{\partial W_j^{(N)}}{\partial \alpha_k} \cdot \big( 2\beta_k+ 4\mu_k \alpha_k \big)+ \frac{\partial W_j^{(N)}}{\partial \beta_k} \cdot \big( B_k^{(N)}- 4\mu_k \beta_k- \lambda_k^4 \delta_k \alpha_k \big) \\[3pt] \nonumber
    &\qquad \qquad \quad \ + \frac{\partial W_j^{(N)}}{\partial \lambda_k} \big( M_k^{(N)}- 4\mu_k \lambda_k \big)+ \frac{\partial W_j^{(N)}}{\partial \mu_k} \big( \lambda_k^4 \delta_k- 4\mu_k^2 \big)+ \frac{\partial W_j^{(N)}}{\partial \delta_k} D_k^{(N)} \bigg).
\end{align}
\;\\[-.2cm]
By definition, we then have for the modulation error (note the modulation equations are now modified by $B_j^{(N)}, D_j^{(N)}, M_j^{(N)}$)
\begin{align}\label{eq definition of S_j^N} 
    &S_j^{(N)}(t,x) \\ \nonumber
    &= - i\lambda_j^3 \big( \dot{\alpha}_j- 2\beta_j- 4\mu_j \alpha_j \big) \nabla V_j^{(N)}+ i\lambda_j \big( \dot{\lambda}_j+ 4\lambda_j \mu_j- M_j^{(N)} \big) \Lambda V_j^{(N)}+ i\lambda_j^2 \big( \dot{\delta}_j- D_j^{(N)} \big) \rho \\[3pt] \nonumber
    &- \big( \dot{\mu}_j+ 4\mu_j^2- \lambda_j^4 \delta_j \big) |y_j|^2 V_j^{(N)}- \lambda_j \big( \dot{\beta}_j+ 4\mu_j \beta_j+ (\dot{\mu}_j+ 4\mu_j^2) \alpha_j- B_j^{(N)} \big) \cdot y_j V_j^{(N)} \\[3pt]  \nonumber
    &- \lambda_j^2 \big( \dot{\gamma}_j+ (\dot{\beta}_j+ 4\mu_j \beta_j) \cdot \alpha_j+ (\dot{\mu}_j+ 4\mu_j^2) |\alpha_j|^2+ |\beta_j|^2- \lambda_j^2 \big)  V_j^{(N)} \\[3pt]  \nonumber
    &+ i\lambda_j^2 \sum_{k=1}^m \Bigg[ \frac{\partial W_j^{(N)}}{\partial \alpha_k} \cdot \left( \dot{\alpha}_k- 2\beta_k- 4\mu_k \alpha_k \right)+ \frac{\partial W_j^{(N)}}{\partial \beta_k} \cdot \left( \dot{\beta}_k+ 4\mu_k \beta_k+ \lambda_k^4 \delta_k \alpha_k- B_k^{(N)} \right) \\[3pt]  \nonumber
    &\quad \ + \frac{\partial W_j^{(N)}}{\partial \lambda_k} \big( \dot{\lambda}_k+ 4\mu_k \lambda_k- M_k^{(N)} \big)+ \frac{\partial W_j^{(N)}}{\partial \mu_k} \big( \dot{\mu}_k+ 4\mu_k^2- \lambda_k^4 \delta_k \big)+ \frac{\partial W_j^{(N)}}{\partial \delta_k} \big( \dot{\delta}_k- D_k^{(N)} \big) \Bigg].
\end{align}
We emphasize the  approximation, which will be defined via the $T_j^{(n)}$ functions, requires to compare degrees of (parameter) homogeneity for the source terms in $ \tilde{E}_j^{(N)}$. Furthermore, $M_j^{(N)}$ are free to choose (see Section \ref{sec:traje}) and $B_j^{(N)}, D_j^{(N)}$ are determined by orthogonality in the proof of Proposition \ref{prop construction of approximate bubbles}.\\[6pt]
The following notion will be useful to guarantee the accuracy of the approximation.
\begin{Def}[\textbf{Admissible functions}] \label{def admissible} 
Recalling \eqref{eq notation}, let $\Omega$ denote the space of non-collision positions:
\begin{equation} \begin{aligned}
    \Omega:= \Big\{ P= (\alpha, \beta, \lambda, \mu, \delta) \in \mathbb{R}^{4m} \times \mathbb{R}^{4m} \times \mathbb{R}_+^m \times \mathbb{R}^m \times \mathbb{R}^m \ \big| \ \alpha_j \neq \alpha_k,\ \forall j \neq k \Big\}.
\end{aligned} \end{equation}
\;\\[-.3cm]
(1) Let $n \in \mathbb{N}$. Define $S_n$ to be the set of all functions $\sigma: \Omega \to \mathbb{R}$ which are finite sums of 
\begin{equation}
    c\prod_{j \neq k} |\alpha_j- \alpha_k|^{-q_{jk}} \prod_{j=1}^m \alpha_j^{p_j} \beta_j^{k_j} \lambda_j^{l_j} \mu_j^{m_j} \delta_j^{n_j},
\end{equation}
where $c \in \mathbb{R}$, $q_{jk} \in \mathbb{N}$, $p_j \in \mathbb{N}^4$, $k_j \in \mathbb{N}^4$, $l_j \in \mathbb{Z}$, $m_j \in \mathbb{N}$, $n_j \in \mathbb{N}$ and
\begin{equation} \label{eq degree counting} \begin{gathered}
    \sum_{j \neq k} q_{jk}+ \sum_{j=1}^m \big( 2m_j+ 3n_j- |p_j| \big)= n \quad \textbf{in the hyperbolic case}; \\
    \sum_{j \neq k} q_{jk}+ \sum_{j=1}^m \big( |k_j| + 4m_j+ 6n_j- |p_j| \big)= n \quad \textbf{in the parabolic case}.
\end{gathered} \end{equation}
\;\\
(2) We say a function $u: \Omega \times \mathbb{R}^4 \to \mathbb{C}$ is \textbf{admissible} if $u$ is a finite sum of 
\begin{equation}
    z\sigma(\alpha, \beta, \lambda, \mu, \delta) \tau(x),
\end{equation}
where $z \in \mathbb{C}$, $\sigma \in S_n$ for some $n \in \mathbb{N}$ and $\tau \in C^\infty$ satisfies
\begin{equation}
    \big| \nabla^k \tau(x) \big| \le e^{-c_k|x|}, \qquad \forall k \ge 0,\ x \in \mathbb{R}^4.
\end{equation}
If $n$ is the same for all addends, then we say $u$ is admissible of degree $n$. Otherwise, taking $n$ as the minimal one among all addends, we say $u$ is admissible of degree $\ge n$.
\end{Def}
\;\\
\emph{The degree} in Definition \ref{def admissible}  is such that we count powers of  $a^{-1}$, i.e. functions in the class $S_n$ are essentially of order $O(a^{-n})$ given the expected decay of the parameters $P(t)$, c.f. the modulation part of \eqref{calc-E_j} and the error $S_j^{(N)}(t, y_j)$. This decay will be obtained below in Sections \ref{sec:traje} - Section \ref{sec:bootstrap}.\\[5pt]
Hence the counting formula \eqref{eq degree counting} is derived from the expected contributions to the decay rates for each parameter. For example, in the case of  hyperbolic trajectories, we have $ \alpha_j(t) \sim v_j t + O(1)$. Thus $ a(t) = t$ and we will further prove as $t \to + \infty$
\[
|\beta_j| \lesssim 1,\;\;\lambda_j \sim 1,\;\;|\mu_j| \lesssim a^{-2},\;\; |\delta_j| \lesssim a^{-3}.
\]
Therefore the contributions in the first line of \eqref{eq degree counting} are as follows:
The power $ - | p_j| $ for $\alpha_j$, no contribution for $\beta_j$ and $ \lambda_j$, the power $2m_j$ for $\mu_j$, as well as the power $3n_j$ for $\delta_j$.\\[8pt]
In particular we have the following decay estimate for admissible functions.
\begin{Lemma} \label{lem properties of admissible functions-2}
Let $n \in \mathbb{N}$, $u$ be admissible of degree $n$ and 
\begin{equation} \label{eq boundedness of g} \begin{split}
    &|\alpha| \lesssim a, \ |\beta| \lesssim 1, \ \lambda \sim 1, \ |\mu| \lesssim a^{-2}, \ |\delta| \lesssim a^{-3} \quad \textbf{hyperbolic}, \\[3pt]
    &|\alpha| \lesssim a, \ |\beta| \lesssim a^{-1}, \ \lambda \sim 1, \ |\mu| \lesssim a^{-4}, \ |\delta| \lesssim a^{-6} \quad \textbf{parabolic},
\end{split} \end{equation}
in the case of either hyperbolic or parabolic trajectories. Then there are $ c_k > 0$ with 
\[
|\nabla^k u(P, x) | \lesssim a^{-n} e^{-c_k|x|},\;\; k \geq 0.
\]
\end{Lemma}
\;\\[-.2cm]
We also state useful algebraic properties for admissible functions. 
\begin{Lemma} \label{lem properties of admissible functions}
Let $n,m \in \mathbb{N}$ and $u,v$ be admissible of degree $n,m$, respectively. Then
\begin{enumerate} [label=(\arabic*)]
    \item $uv$ is admissible of degree $n+m$;
    \item $\phi_u v$ is admissible of degree $n+m$;
    \item $\forall k \ge 1$, $\psi_u^{(k)} v$ is admissible of degree $k+1+n+m$;
    \item $\forall N \ge 1$, $\phi_u^{(N)} v$ is admissible of degree $\ge 2+n+m$;
\end{enumerate}
\end{Lemma}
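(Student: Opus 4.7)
The plan is to reduce to atomic terms $u = z_u \sigma_u(P) \tau_u(x)$ and $v = z_v \sigma_v(P) \tau_v(x)$ by linearity of all four operations, and then verify each claim by unwinding definitions and performing a degree count against the $S_n$ formula.

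For (1), the product $\sigma_u \sigma_v$ is again a monomial in the same generators with exponents $q_{jk}, p_j, k_j, l_j, m_j, n_j$ adding termwise, so the degree formula gives $n+m$ in both the hyperbolic and parabolic cases, while $\tau_u \tau_v$ inherits exponential decay of all derivatives via Leibniz. For (2), I would factor $\phi_u v = -z_u z_v\, \sigma_u \sigma_v \cdot (|x|^{-2} \ast \tau_u)(x)\, \tau_v(x)$; the parameter part has the correct degree by (1), and since $|x|^{-2}$ is locally integrable on $\mathbb{R}^4$ and $\tau_u$ is Schwartz-class, $\nabla^k (|x|^{-2} \ast \tau_u) = |x|^{-2} \ast \nabla^k \tau_u$ is smooth and uniformly bounded; multiplying by the exponentially decaying $\tau_v$ then delivers the required exponential bounds on all derivatives of the product.

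The substantive step is (3). The homogeneity of $F_k(\alpha, \zeta)$ (degree $-k-1$ in $\alpha$, degree $k-1$ in $\zeta$) writes it as a finite linear combination of monomials $|\alpha|^{-2r_i} \alpha^{s_i} \zeta^{t_i}$ with $2r_i - |s_i| = k+1$ and $|t_i| = k-1$. Expanding $\zeta = \lambda_k^{-1} \xi - \lambda_j^{-1} y_j$ via the multinomial and $\alpha_{jk}^{s_i} = (\alpha_j - \alpha_k)^{s_i}$ via the binomial, and integrating against $u(\xi) = z_u \sigma_u \tau_u(\xi)$, produces a finite sum of terms of the form
\[ \tilde{z}\, \sigma_u \cdot |\alpha_{jk}|^{-2r_i} \alpha_j^p \alpha_k^{p'} \lambda_k^{-|a|} \lambda_j^{-|b|-2}\, y_j^b \cdot \!\int \tau_u(\xi)\, \xi^a\, d\xi, \]
with $|p|+|p'| = |s_i|$ and $|a|+|b| = |t_i|$. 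Since neither degree formula involves $l_j$, the $\lambda$-exponents contribute zero; the $\alpha$-contribution is $2r_i - (|p|+|p'|) = k+1$; and the factor $y_j^b \tau_v(y_j)$ retains exponential decay of $\tau_v$ together with all its derivatives. Multiplying by $v$ adds the degree $m$, for a total degree $k+1+n+m$, as claimed.

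Part (4) is then immediate: $\phi_u^{(N)} v = \sum_{k=1}^N \psi_u^{(k)} v$, each summand has degree exactly $k+1+n+m$ by (3), and the minimum over $1 \le k \le N$ is attained at $k=1$, yielding degree $\ge 2+n+m$. The main obstacle is the bookkeeping in (3), where one must verify that the homogeneity identity for $F_k$ in $\alpha$ translates precisely into the claimed $k+1$ increase in the combined $q_{jk}$ and $p_j$ contributions, and that the extraneous $\lambda$- and $\beta$-exponents are invisible to both the hyperbolic and parabolic degree formulas.
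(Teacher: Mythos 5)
Your proof is correct, and since the paper states Lemma \ref{lem properties of admissible functions} without proof, your argument supplies exactly the reasoning the authors implicitly rely on. The reduction to atomic terms $z\sigma(P)\tau(x)$ by bilinearity is the right move, and the bookkeeping in (3) is handled properly: every monomial $|\alpha|^{-2r}\alpha^s\zeta^t$ appearing in $F_k(\alpha,\zeta)$ satisfies $2r-|s|=k+1$ and $|t|=k-1$ by the bi-homogeneity, the binomial expansion of $\alpha_{jk}^s$ distributes $|s|$ among $|p_j|$ and $|p_k|$ so that $q_{jk}-|p_j|-|p_k|=k+1$ in both degree formulas, the $\lambda$-exponents generated (including the $\lambda_j^{-2}$ prefactor) carry zero weight, no $\beta$-, $\mu$-, or $\delta$-exponents are produced, and the polynomial factor $y_j^b$ is absorbed into the exponential decay of $\tau_v$ by giving up a small amount of the decay rate. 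For (2), the observation that $\nabla^k(|x|^{-2}*\tau_u)=|x|^{-2}*\nabla^k\tau_u$ is bounded (by local integrability of $|x|^{-2}$ in $\R^4$ and rapid decay of $\tau_u$), and that multiplication by $\tau_v$ restores the required exponential bounds on all derivatives, is exactly right.
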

\;\\[-.2cm]
The following Lemma states that the inversion of $L_{\pm}$ in Lemma \ref{Inversion-of-L} is consistent with the admissibility in Definition \ref{def admissible}.
\begin{Lemma}[Inversion of $L_{\pm}$ for admissible functions] \label{Inversion-of-L-admissible}
Let $f$ be a real-valued admissible function of degree $n \in \N$. If $\langle f, \nabla Q \rangle = 0$, then the solution to $ L_+u = f$ in Lemma \ref{Inversion-of-L} is admissible of degree $n$. Moreover if $\langle f, Q \rangle = 0$, then the solution  to $ L_-u = f$ in Lemma \ref{Inversion-of-L} is admissible of degree $n$. 
\end{Lemma}
\;\\[-.2cm]
The next proposition implies the construction of the approximation. 
\begin{Prop} \label{prop construction of approximate bubbles}
Given $m_j^{(n)} \in S_{n+1}$. For $n \ge 1$ and $1 \le j \le m$, there exist $d_j^{(n)}, b_j^{(n)} \in S_{n+1}$ and $T_j^{(n)}$ that is admissible of degree $n+1$ such that: for any $N \ge 1$, if we set
\begin{gather*}
    M_j^{(N)}(P)= \sum_{n=1}^N m_j^{(n)}(P), \\
    W_j^{(N)}(P,y_j)= \sum_{n=1}^N T_j^{(n)}(P,y_j),  \\
    D_j^{(N)}(P)= \sum_{n=1}^N d_j^{(n)}(P), \quad B_j^{(N)}(P)= \sum_{n=1}^N b_j^{(n)}(P),
\end{gather*}
then $\tilde{E}_j^{(N)}$ defined by \eqref{eq definition of E_j tilde} is admissible of degree $\ge N+2$.
\end{Prop}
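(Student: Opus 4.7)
The plan is to proceed by induction on $N \geq 0$, constructing the corrections $T_j^{(n)}, d_j^{(n)}, b_j^{(n)}$ at each level $n \leq N$ so that $\tilde{E}_j^{(N)}$ is admissible of degree $\geq N+2$. For the base case $N=0$, all corrections vanish and $V_j^{(0)} = Q + \delta_j\rho$ together with the identity $L_+\rho = -|y|^2 Q$ from Lemma~\ref{lem straight} cancels all terms linear in $\delta_j$ in $\lambda_j^4 (\Delta V_j^{(0)} - V_j^{(0)} - \phi_{|V_j^{(0)}|^2}V_j^{(0)} - \delta_j|y_j|^2 V_j^{(0)})$. Only $O(\delta_j^2)$ terms survive, which together with the convention $\phi^{(0)} = 0$ eliminating the interaction sum give $\tilde{E}_j^{(0)}$ admissible of degree $\geq 6$.

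For the inductive step, assume $\tilde{E}_j^{(N-1)}$ is admissible of degree $\geq N+1$, with leading (degree $N+1$) part $F_j^{(N-1)}$. I would first compute $\tilde{E}_j^{(N)} - \tilde{E}_j^{(N-1)}$ and isolate its degree-$(N+1)$ contribution. A careful expansion shows that the principal new contributions are: the linearization $-\lambda_j^4(L_+ T_j^{(N),1} + i L_- T_j^{(N),2})$ of the Hartree nonlinearity around $V_j = Q$; the newly added Taylor coefficient $-\lambda_j^4 \sum_{k \neq j}\psi^{(N)}_{|Q|^2}Q$, which by Lemma~\ref{lem properties of admissible functions}(3) is admissible of degree exactly $N+1$; and the explicit terms $-\lambda_j b_j^{(N)} \cdot y_j Q + i\lambda_j^2 d_j^{(N)} \rho + i\lambda_j m_j^{(N)} \Lambda Q$. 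All other changes (higher-order corrections in $T_j^{(N)}$, cross products with $\delta_j\rho$ or $W_j^{(N-1)}$, the variation of the old $\psi^{(n)}$ with $n<N$ caused by replacing $V_k^{(N-1)}$ by $V_k^{(N)}$, and the $\partial W_j^{(N)}/\partial P_k$ terms) fall into degree $\geq N+2$ by Lemma~\ref{lem properties of admissible functions}.

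Imposing that the total degree-$(N+1)$ contribution to $\tilde{E}_j^{(N)}$ vanishes and splitting into real and imaginary parts yields $L_+ T_j^{(N),1} = \lambda_j^{-4}\bigl(\mathrm{Re}\,F_j^{(N-1)} - \lambda_j^4 \sum_{k\neq j}\psi^{(N)}_{|Q|^2}Q - \lambda_j b_j^{(N)} \cdot y_j Q\bigr)$ and $L_- T_j^{(N),2} = \lambda_j^{-4}\bigl(\mathrm{Im}\,F_j^{(N-1)} + \lambda_j^2 d_j^{(N)} \rho + \lambda_j m_j^{(N)} \Lambda Q\bigr)$. By Lemma~\ref{Inversion-of-L}, these admit admissible solutions of degree $N+1$ provided the right-hand sides are orthogonal to $\nabla Q$ (for $L_+$) and $Q$ (for $L_-$). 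The identities $\langle yQ, \nabla Q\rangle = -2\|Q\|_{L^2}^2$ and $(\rho, Q) = -\tfrac{1}{2}\|xQ\|_{L^2}^2$ from Lemma~\ref{lem straight} are both non-zero, and hence uniquely determine $b_j^{(N)}$ and $d_j^{(N)}$; a degree count (the numerator is an $x$-integral of a degree $N+1$ admissible function, the denominator is a nonzero constant) shows both lie in $S_{N+1}$.

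The main subtle point, which the abstract flags as the degeneracy of generalized root space elements, is that $(\Lambda Q, Q) = 0$ by $L^2$-criticality, so $m_j^{(N)}$ drops out of the $L_-$ solvability condition entirely. This is precisely why $m_j^{(N)}$ may be prescribed freely as an input, to be fixed later when closing the modulation system in Section~\ref{sec:traje}. Once $b_j^{(N)}, d_j^{(N)}$ are chosen, Lemma~\ref{Inversion-of-L} produces $T_j^{(N)}$ admissible of degree $N+1$ with the required exponential decay (inherited from the right-hand side), and the algebraic closure of admissible functions under products, convolution potentials, and higher Taylor coefficients (Lemma~\ref{lem properties of admissible functions}) ensures the residual in $\tilde{E}_j^{(N)}$ is admissible of degree $\geq N+2$, closing the induction. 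The main obstacle, apart from keeping the algebra clean, is the careful bookkeeping required to verify that no degree-$(N+1)$ terms are missed when expanding the nonlocal $\phi^{(N)}_{|V_k^{(N)}|^2}V_j^{(N)}$ and the parameter-derivative pieces of $W_j^{(N)}$.
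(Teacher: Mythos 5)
Your proposal is correct and follows essentially the same inductive scheme as the paper's proof: comparing $\tilde{E}_j^{(N+1)}$ with $\tilde{E}_j^{(N)}$, isolating the new leading-degree terms, solving the resulting $L_\pm$ equations via Lemma~\ref{Inversion-of-L}, and fixing $b_j, d_j$ from the orthogonality conditions using $(yQ,\nabla Q)\neq 0$ and $(\rho,Q)\neq 0$. Your additional observation that $(\Lambda Q, Q)=0$ is what leaves $m_j^{(n)}$ undetermined at this stage is a nice explicit remark about the degeneracy the paper alludes to, but it does not change the route.
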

\;\\[-.3cm]
The proof is very similar to the proof of \cite[Proposition~2.4]{Wu}, however we will give some details.

\begin{proof} 
Let $\hat{E}_j^{(N)}$ denote the terms in $\tilde{E}_j^{(N)}$ of degree $N+2$. Further let us write $T_j^{(n)}= X_j^{(n)}+ i Y_j^{(n)}$ and recall $V_j^{(N)}$ is defined via \eqref{eq V_j^N}. We prove by induction over $N \geq 1$.\\[4pt]
Let us also make the assumption $m_j^{(1)}=0$ which simplifies the initial calculation and is consistent with the procedure in Section \ref{sec:traje}. However, we can remove it along the same line of arguments with slightly more effort.\\[4pt]
For the first step $N=1$ take $d_j^{(1)}= b_j^{(1)}=0$ and let $T_j^{(1)}$ be admissible of degree $2$, real-valued, as well as independent of $\beta, \mu, \delta$.\\[4pt]
Now we have
\begin{align} \nonumber
    \tilde{E}_j^{(1)}(t,y_j) =& \;\lambda_j^4 \Big( \Delta V_j^{(1)}- V_j^{(1)}- \phi_{\left| V_j^{(1)} \right|^2} V_j^{(1)}- \delta_j |y_j|^2 V_j^{(1)} \Big)- \lambda_j^4 \sum_{k \neq j} \phi_{\left| V_k^{(1)} \right|^2}^{(1)} V_j^{(1)}\\[3pt]
    &\;+ i\lambda_j^2 \sum_{k=1}^m \bigg( \frac{\partial T_j^{(1)}}{\partial \alpha_k} \cdot \big( 2\beta_k+ 4\mu_k \alpha_k \big)- \frac{\partial T_j^{(1)}}{\partial \lambda_k} 4\mu_k \lambda_k \bigg) \\ \nonumber
    = &\; \lambda_j^4 \Big( \Delta T_j^{(1)}- T_j^{(1)}- \phi_{Q^2} T_j^{(1)}- 2\phi_{Q T_j^{(1)}} Q- \sum_{k \neq j} \psi_{Q^2,k}^{(1)} Q \Big)+ error \\ \nonumber
    = &\; \lambda_j^4 \Big( -L_+ T_j^{(1)}+ \sum_{k \neq j} \psi_{Q^2,k}^{(1)} Q \Big)+ error,
\end{align}
where $error$ consists of higher order terms from the interaction part and the above second line, so that it is admissible of degree greater than or equal to $3$. Since $L_+(\Lambda Q)= -2Q$, we may in particular take 
\begin{equation} 
    T_j^{(1)}= - 2\sum_{k \neq j} \psi_{Q^2,k}^{(1)} \Lambda Q
\end{equation}
to cancel the degree $2$ terms. This proves the conclusion when $N=1$.\\[4pt]
Now we go from $N$ to $N+1$ and consider each term in $\tilde{E}_j^{(N+1)}- \tilde{E}_j^{(N)}$. Using that $T_j^{(n)}$ is admissible of degree $n+1$, we obtain $\tilde{E}_j^{(N+1)}- \tilde{E}_j^{(N)}$ is of degree greater than or equal to $N+2$. Further the terms of degree $N+2$ are given by
\begin{align}
    &- \lambda_j^4 \big( L_+ X_j^{(N+1)}+ iL_- Y_j^{(N+1)} \big)- \lambda_j^4 \sum_{k \neq j} \psi_{Q^2,k}^{(N+1)} Q \\
    &+ i\lambda_j^2 d_j^{(N+1)} \rho- \lambda_j b_j^{(N+1)} y_j Q+ i\lambda_j m_j^{(N+1)} \Lambda Q.
\end{align}
For details on these observations we refer to the proof of \cite[Proposition~2.4]{Wu}, and in particular the expansion of $\tilde{E}_j^{(N+1)}- \tilde{E}_j^{(N)}$.\\[5pt]
We want to eliminate all terms in $\tilde{E}_j^{(N+1)}$ of degree $N+2$, therefore it suffices to require
\begin{equation} 
    \left\{ \begin{aligned}
        &L_+ X_j^{(N+1)}= -\lambda_j^{-3} b_j^{(N+1)} \cdot y_j Q- \sum_{k \neq j} \psi_{Q^2,k}^{(N+1)} Q+ \lambda_j^{-4} \mathrm{Re} \ \hat{E}_j^{(N)}, \\
        &L_- Y_j^{(N+1)}= \lambda_j^{-2} d_j^{(N+1)} \rho+ \lambda_j^{-3} m_j^{(N+1)} \Lambda Q+ \lambda_j^{-4} \mathrm{Im} \ \hat{E}_j^{(N)},
    \end{aligned} \right.
\end{equation}
By Lemma \ref{Inversion-of-L} and Lemma \ref{Inversion-of-L-admissible}, such $X_j^{(N+1)}$ and $Y_j^{(N+1)}$ exist if
\begin{equation} \label{this-othog}
    \left\{ \begin{aligned}
        & \Big( -\lambda_j^{-3} b_j^{(N+1)} \cdot y_j Q- \sum_{k \neq j} \psi_{Q^2,k}^{(N+1)} Q+ \lambda_j^{-4} \mathrm{Re} \ \hat{E}_j^{(N)}, \nabla Q \Big)=0, \\ 
        &\Big( \lambda_j^{-2} d_j^{(N+1)} \rho+ \lambda_j^{-3} m_j^{(N+1)} \Lambda Q+ \lambda_j^{-4} \mathrm{Im} \ \hat{E}_j^{(N)}, Q \Big)=0.
    \end{aligned} \right.
\end{equation}
In particular $b_j^{(N+1)}$ and $d_j^{(N+1)}$ exist because $(y_j Q, \nabla Q)$ is invertible and $(\rho, Q) \neq 0$.
\end{proof}

\begin{Rem}
We can extend $\tilde{E}_j^{(N)}$ to $N=0$ by setting $V_j^{(0)}= Q+ \delta_j \rho$ naturally. Then the base case $N=1$ is similar to the induction process. Nevertheless, the precise formula of $T_j^{(1)}$ will be useful in Section \ref{sec:traje}.
\end{Rem}

\;\\
Finally Proposition \ref{prop construction of approximate bubbles}  implies proper control over the interaction part of the error term.
\begin{Prop} \label{prop accuracy of approximate solution}
Let $V_j^{(N)}$ be as in Proposition \ref{prop construction of approximate bubbles}. For $R_g^{(N)}$ defined by \eqref{eq approximate solution}, let 
\begin{equation} \label{eq definition of Psi}
    \Psi^{(N)}= i\partial_t R_g^{(N)}+ \Delta R_g^{(N)}- \phi_{|R_g^{(N)}|^2} R_g^{(N)}- \sum_{j=1}^m S_j^{(N)} e^{i\gamma_j+ i\beta_j \cdot x+ i\mu|x|^2}.     
\end{equation}
\noindent
If \eqref{eq boundedness of g} is satisfied, then there exist constants $ c, C > 0$  such that
\begin{equation} \label{eq estimate of Psi}
    |\Psi^{(N)}(t,x)| \le \frac{C}{a^{N+2}(t)}  \max \limits_{j = 1, \dots,m} e^{-c|x-\alpha_j(t)|},\;\; x \in \R^4.
\end{equation} 
Recall $ \displaystyle a = \min_{j < k} |\alpha_{j k}(t)| \sim  t$ in the hyperbolic and $ a \sim t^{\f12}$ in the parabolic case.
\end{Prop}

\begin{proof}
For simplicity, we omit the superscript $N$ and the subscript $g$.

By \eqref{eq equation of approximate solution}, we have the following expression
\begin{equation} \begin{aligned}
    \Psi &= \sum_{j=1}^m \tilde{E}_j(t,y_j) e^{-i\gamma_j+ i\beta_j \cdot x}+ \sum_{j \neq k} \phi_{\mathrm{Re} (R_j \overline{R_k})} R \\
    &\ +\sum_{j=1}^m \lambda_j^4 \sum_{k \neq j} \bigg[ \phi_{\left| V_k \right|^2}^{(N)}- \Big( \frac{\lambda_k}{\lambda_j} \Big)^2 \phi_{ \left| V_k \right|^2} \Big( \frac{\lambda_k y_j}{\lambda_j}+ \lambda_k \alpha_{jk} \Big) \bigg] V_j e^{-i\gamma_j+ i\beta_j \cdot x}.
\end{aligned} \end{equation}
Proposition \ref{prop construction of approximate bubbles} states that $\tilde{E}_j^{(N)}$ is admissible of degree greater than or equal to $ N+2$, so the first term is controlled by the right-hand side of \eqref{eq estimate of Psi} using \eqref{eq boundedness of g} and the definition of the degree. The second term has an exponential decay in $a$ by the exponential decay of $Q$.\\[5pt]
For the last term, using the Taylor expansion and exponential decay of $V_j$, we have
\begin{equation} \label{eq accuaracy of dipole expansion, preparation}
    \left| \phi_{\left| V_k \right|^2}^{(N)}- \left( \frac{\lambda_k}{\lambda_j} \right)^2 \phi_{|V_k|^2} \Big( \frac{\lambda_k y_j}{\lambda_j}+ \lambda_k \alpha_{jk} \Big) \right| \le \left\{ \begin{aligned}
        &C (1+|y_j|)^N,\ |y_j| \ge \frac{\lambda_j |\alpha_{jk}|}{3}, \\
        &\frac{C(1+|y_j|)^N}{|\alpha_{jk}|^{N+2}},\ |y_j| \le \frac{\lambda_j |\alpha_{jk}|}{3}.
    \end{aligned} \right.
\end{equation}
A detailed proof can be found in \cite[Proposition~2.2]{Wu}. This yields
\begin{equation} \label{eq accuaracy of dipole expansion}
    \left| \phi_{\left| V_k \right|^2}^{(N)}- \left( \frac{\lambda_k}{\lambda_j} \right)^2 \phi_{|V_k|^2} \left( \frac{\lambda_k y_j}{\lambda_j} + \lambda_k\alpha_{jk} \right) \right| \le \frac{C(1+|y_j|)^{2N}}{a^{N+2}},
\end{equation}
which provides control over the last term.
\end{proof}

\section{The trajectories}\label{sec:traje}

Next we study the condition under which we have $S_j^{(N)}=0$. Since $\gamma$ can be solved from the other parameters, we essentially need to solve
\begin{equation} \label{eq parameters}
    \left\{ \begin{aligned}
        &\dot{\alpha}_j- 2\beta_j- 4\mu_j \alpha_j=0, \\
        &\dot{\beta}_j+ 4\mu_j \beta_j+ \lambda_j^4 \delta_j \alpha_j- B_j^{(N)}=0, \\
        &\dot{\lambda}_j+ 4\lambda_j \mu_j- M_j^{(N)}=0, \\
        &\dot{\mu}_j+ 4\mu_j^2- \lambda_j^4 \delta_j=0, \\
        &\dot{\delta}_j- D_j^{(N)}=0.
    \end{aligned} \right.
\end{equation}
In fact, we wish to view it as a perturbation of the $m$-body problem \eqref{m-body}, so we need precise information of the first few terms of $D_j^{(N)}$ and $B_j^{(N)}$.\\
\;\\
\textbf{Hyperbolic case:}\\
We take $m_j^{(n)}=0$ for $n \neq 2$, and \begin{equation}
    m_j^{(2)}= -\frac{2\| Q \|_{L^2}^2}{\lambda_j} \sum_{k \neq j} \frac{\alpha_{jk} \cdot \beta_{jk}}{|\alpha_{jk}|^4}.
\end{equation}
We will show we can take
\begin{equation} \label{eq first few terms hyperbolic}
    b_j^{(1)}=0, \quad d_j^{(1)}= d_j^{(2)}= d_j^{(3)}= 0.
\end{equation}

Since $\hat{E}_j^{(0)}=0$ and $\big( \sum \limits_{k \neq j} \psi_{Q^2,k}^{(1)} Q, \nabla Q \big)=0$, we take 
\begin{equation}
    b_j^{(1)}= d_j^{(1)}=0, \quad T_j^{(1)}= -\frac{\| Q \|_{L^2}^2}{2\lambda_j^2} \Big( \sum_{k \neq j} \frac{1}{|\alpha_{jk}|^2} \Big) \Lambda Q.
\end{equation}

Then we have
\begin{equation} \begin{aligned}
    \hat{E}_j^{(1)} &= -\lambda_j^4 \delta_j (L_+ \rho+ |y_j|^2 Q)+ i\lambda_j^2 \sum_{k=1}^m \frac{\partial T_j^{(1)}}{\partial \alpha_k} \cdot 2\beta_k \\
    &= i\lambda_j^2 \sum_{k=1}^m \frac{\partial T_j^{(1)}}{\partial \alpha_k} \cdot 2\beta_k.
\end{aligned} \end{equation}
Since $\re \ \hat{E}_j^{(1)}=0$, $\imm \big( \hat{E}_j^{(1)}, Q \big)=0$, and given our choice of $m_j^{(2)}$, we take
\begin{equation}
    b_j^{(2)}= -\| Q \|_{L^2}^2 \sum_{k \neq j} \frac{\alpha_{jk}}{|\alpha_{jk}|^4}, \quad d_j^{(2)}=0, \quad T_j^{(2)}=0. 
\end{equation}

Then we have
\begin{equation} 
    \imm \ \hat{E}_j^{(2)}= \lambda_j^2 \sum_{k=1}^m \Big( \frac{\partial T_j^{(1)}}{\partial \alpha_k} \cdot 4\mu_k \alpha_k- \frac{\partial T_j^{(1)}}{\partial \lambda_k} 4\mu_k \lambda_k \Big).
\end{equation}
Since $(\Lambda Q, Q)=0$, we take $d_j^{(3)}=0$.

\begin{Prop} \label{prop hyperbolic trajectory}
Let $(\alpha^\infty, \beta^\infty)$ be a hyperbolic solution of \eqref{m-body}. Assume $\lambda^\infty \in \R_+^m$ and $\mu^\infty= \delta^\infty=0$. Then there exists a solution $P^{(N)}$ of \eqref{eq parameters} such that 
\begin{equation}
    \Big| P^{(N)}(t)- P^\infty(t) \Big| \to 0 \;\; \text{as} \;\; t \to +\infty.
\end{equation}
\end{Prop}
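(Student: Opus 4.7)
The plan is to realize \eqref{eq parameters} as a perturbation of the Newtonian $m$-body law around $P^\infty$ and solve the resulting final-value problem on $[T_0,\infty)$ (with $T_0$ large) by a contraction-mapping argument, in the spirit of \cite{KRM,Wu}. I would write $\tilde P=(\tilde\alpha,\tilde\beta,\tilde\lambda,\mu,\delta) := P^{(N)}-P^\infty$ with $P^\infty=(\alpha^\infty,\beta^\infty,\lambda^\infty,0,0)$ and recall $\dot\alpha^\infty_j=2\beta^\infty_j$, $\dot\beta^\infty_j=b_j^{(2)}(P^\infty)$ from \eqref{m-body}. Using $a^\infty(t)\sim t$ (Lemma \ref{prelim}, Proposition \ref{hyp}), Lemma \ref{lem properties of admissible functions-2}, and the vanishing \eqref{eq first few terms hyperbolic}, a naive weight count suggests the decay rates
\[ |\tilde\alpha|\lesssim t^{-1},\qquad |\tilde\beta|\lesssim t^{-2},\qquad |\tilde\lambda|\lesssim t^{-2},\qquad |\mu|\lesssim t^{-3},\qquad |\delta|\lesssim t^{-4}. \]
The structural identities $d_j^{(1)}=d_j^{(2)}=d_j^{(3)}=0$ are essential: they upgrade the otherwise-expected $O(t^{-2})$ rate of $D_j^{(N)}$ to $O(t^{-5})$, which by one integration produces $\delta=O(t^{-4})$, and this then feeds $\dot\mu=-4\mu^2+\lambda^4\delta$ to give $\mu=O(t^{-3})$.

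Next I would introduce the Banach space $\mathcal{X}_{T_0}$ of continuous $\tilde P$ on $[T_0,\infty)$ equipped with the weighted norm
\[ \|\tilde P\|_{\mathcal{X}_{T_0}} = \sup_{t\ge T_0}\big(\,\eta_\alpha t\,|\tilde\alpha(t)|+\eta_\beta t^{2}|\tilde\beta(t)|+\eta_\lambda t^{2}|\tilde\lambda(t)|+\eta_\mu t^{3}|\mu(t)|+\eta_\delta t^{4}|\delta(t)|\,\big), \]
with auxiliary weights $\eta_\bullet>0$ to be fixed later, fix a closed ball $\mathcal{B}_R\subset\mathcal{X}_{T_0}$, and reformulate \eqref{eq parameters} as a fixed-point equation $\tilde P=\mathcal{T}(\tilde P)$ obtained by integrating from $+\infty$ with the boundary condition $\tilde P(\infty)=0$, namely
\begin{align*}
\mu_j(t) &= \int_t^\infty\!\!\big(4\mu_j^{2}-\lambda_j^{4}\delta_j\big)(s)\,ds,\qquad \delta_j(t) = -\int_t^\infty\!\! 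D_j^{(N)}(P(s))\,ds,\\
\tilde\lambda_j(t) &= \int_t^\infty\!\!\big(4\lambda_j\mu_j-M_j^{(N)}(P(s))\big)\,ds,\\
\tilde\beta_j(t) &= \int_t^\infty\!\!\Big(4\mu_j\beta_j+\lambda_j^{4}\delta_j\alpha_j-\big[B_j^{(N)}(P(s))-b_j^{(2)}(P^\infty(s))\big]\Big)\,ds,\\
\tilde\alpha_j(t) &= -\int_t^\infty\!\!\big(2\tilde\beta_j(s)+4\mu_j(s)\alpha_j(s)\big)\,ds.
\end{align*}
On $\mathcal{B}_R$ every integrand is absolutely integrable, and the identity $\int_t^\infty s^{-k}\,ds=(k-1)^{-1}t^{-k+1}$ for $k>1$ shows $\mathcal{T}(\mathcal{B}_R)\subset\mathcal{B}_R$ once $R$ is fixed and $T_0$ is chosen sufficiently large.

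For contractivity, I would decompose each force difference as
\[ B_j^{(N)}(P)-b_j^{(2)}(P^\infty) = \underbrace{\big[B_j^{(N)}(P)-b_j^{(2)}(P)\big]}_{\text{admissible of degree }\ge 4} \;+\; \underbrace{\big[b_j^{(2)}(P)-b_j^{(2)}(P^\infty)\big]}_{\text{Lipschitz in }\tilde\alpha,\,O(t^{-4})}, \]
and analogously for $M_j^{(N)}$ and $D_j^{(N)}$: the admissible remainders pick up an extra factor $t^{-1}$ beyond the leading term, while the Lipschitz pieces produce pointwise Lipschitz constants bounded by $t^{-4}$ times admissible prefactors. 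Weighted integration then yields $\|\mathcal{T}(\tilde P_1)-\mathcal{T}(\tilde P_2)\|_{\mathcal{X}_{T_0}}\le\tfrac12\|\tilde P_1-\tilde P_2\|_{\mathcal{X}_{T_0}}$, and Banach's fixed point theorem produces a unique $\tilde P\in\mathcal{B}_R$, which by construction satisfies $|P^{(N)}(t)-P^\infty(t)|\to 0$ as $t\to\infty$. The main obstacle is closing this coupled contraction: the system is rigidly triangular, $\delta\to\mu\to\tilde\lambda\to\tilde\beta\to\tilde\alpha$, and one direct coupling, namely $-4\lambda\mu$ in the $\tilde\lambda$ equation, produces an $O(1)$ (not small) contribution in a naive weighted norm; this forces one to introduce the hierarchy $\eta_\delta\gg\eta_\mu\gg\eta_\lambda$ and $\eta_\beta\gg\eta_\alpha$ to restore contractivity. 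That this whole bookkeeping is actually possible relies crucially on the specific choices of $m_j^{(2)}$ and \eqref{eq first few terms hyperbolic}, which align the leading parts of $B_j^{(N)},M_j^{(N)}$ with the exact Newton force of \eqref{m-body}, so that $B_j^{(N)}(P)-b_j^{(2)}(P^\infty)$ genuinely gains an extra factor $t^{-1}$ over $b_j^{(2)}$ rather than only matching its size.
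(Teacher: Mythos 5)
Your proposal follows the same contraction-mapping-from-infinity strategy as the paper and correctly identifies the crucial structural inputs: the exact alignment of $b_j^{(2)}$ and $m_j^{(2)}$ with the $m$-body law \eqref{m-body}, and the vanishings \eqref{eq first few terms hyperbolic} that upgrade the decay of $D_j^{(N)}$. The one genuine technical difference is how the contraction is closed. The paper stays with the sharp constant weights but shifts the time exponents by an $\epsilon$-staircase (exponents $1-4\epsilon,\,2-3\epsilon,\,2-3\epsilon,\,3-2\epsilon,\,4-\epsilon$ with $\epsilon=\tfrac{1}{10}$), so that every feeding term in the triangular chain $\delta\to\mu\to\lambda,\beta\to\alpha$ gains a uniform factor $T_0^{-\epsilon}$ upon integration; the contraction constant is then trivially made $<1$ by increasing $T_0$, with no hierarchy of constants to track. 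You instead keep the sharp integer rates $t^{-1},t^{-2},t^{-2},t^{-3},t^{-4}$ and compensate with a hierarchy of constant prefactors $\eta_\bullet$. This works, and even yields the slightly sharper rate $t^{-1}$ rather than $t^{-1+4\epsilon}$, but your stated hierarchy is incomplete: from the $4\mu_j\beta_j$ and $\lambda_j^4\delta_j\alpha_j$ terms in the $\tilde\beta$-equation one also needs $\eta_\beta\ll\eta_\mu$ and $\eta_\beta\ll\eta_\delta$, so the full set of constraints is $\eta_\alpha\ll\eta_\beta\ll\eta_\mu\ll\eta_\delta$ together with $\eta_\lambda\ll\eta_\mu$ (the terms you worried about giving $\eta_\beta\eta_\alpha^{-1}$ large actually come with a $t^{-2}$ factor and are absorbed by $T_0$). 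Once those are added the bookkeeping closes and your argument is correct; the paper's $\epsilon$-shift is simply a cleaner way to achieve the same effect.
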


\begin{proof}
Let $\epsilon=\frac{1}{10}$. Define $X= \Big\{ P \in C \big( [T_0,+\infty), \Omega \big) \ \big| \ {\| P-P^\infty \|}_1 \le 1 \Big\}$, where
\begin{equation}
    {\| P \|}_1:= \sum_{j=1}^m \sup_{t \ge T_0} \Big( t^{1-4\epsilon} |\alpha_j(t)|+ t^{2-3\epsilon} |\beta_j(t)|+ t^{2-3\epsilon} |\lambda_j(t)|+ t^{3-2\epsilon} |\mu_j(t)|+ t^{4-\epsilon} |\delta_j(t)| \Big).
\end{equation}
Consider the map $\Gamma$ defined on $X$ by
\begin{equation} \label{eq Gamma} \begin{aligned}
    \Gamma \alpha_j(t) &= \alpha_j^\infty(t)+ \int_t^\infty \big( 2\beta_j^\infty- 2\beta_j- 4\mu_j \alpha_j \big) \d \tau, \\
    \Gamma \beta_j(t)&= \lim_{t \to \infty} \beta_j^\infty(t)+ \int_t^\infty \Big( 4\mu_j \beta_j+ \lambda_j^4 \delta_j \alpha_j- B_j^{(N)}(P) \Big) \d \tau, \\
    \Gamma \lambda_j(t)&= \lambda_j^\infty+ \int_t^\infty \Big( 4\lambda_j \mu_j- M_j^{(N)}(P) \Big) \d \tau, \\
    \Gamma \mu_j(t)&= \int_t^\infty \big( 4\mu_j^2- \lambda_j^4 \delta_j \big) \d \tau, \\
    \Gamma \delta_j(t)&= -\int_t^\infty D_j^{(N)}(P) \d \tau.
\end{aligned} \end{equation}
We prove that $\Gamma$ maps $X$ to $X$ and is a contraction. 

Recall that $M_j^{(N)}= m_j^{(2)}$ and we have \eqref{eq first few terms hyperbolic}. Take $P \in X$. Then 
\begin{equation} \begin{aligned}
    |\Gamma \alpha_j(t)- \alpha_j^\infty(t)| &\le \int_t^\infty \big( \tau^{3\epsilon-2}+ \tau^{2\epsilon-3} \tau \big) \d \tau \lesssim t^{3\epsilon-1}, \\
    |\Gamma \beta_j(t)- \beta_j^\infty(t)| &\lesssim \int_t^\infty \Big( \tau^{2\epsilon-3}+ \tau^{\epsilon-4} \tau+ \big| b_j^{(2)}(P^\infty) \big|+ \sum_{n=2}^N |b_j^{(n)}(P)| \Big) \d \tau \\
    &\lesssim t^{2\epsilon-2}+ \int_t^\infty \tau^{-3} \d \tau \lesssim t^{2\epsilon-2}, \\
    |\Gamma \lambda_j(t)- \lambda_j^\infty| &\lesssim \int_t^\infty \big( \tau^{2\epsilon-3}+ \tau^{-3} \big) \d \tau \lesssim t^{2\epsilon-2}, \\
    |\Gamma \mu_j(t)| &\lesssim \int_t^\infty \big( \tau^{4\epsilon-6}+ \tau^{\epsilon-4} \big) \d \tau \lesssim t^{\epsilon-3}, \\
    |\Gamma \delta_j(t)| &\le \int_t^\infty \sum_{n=4}^N |d_j^{(n)}(P)| \ \d \tau \lesssim \int_t^\infty \tau^{-5} \d \tau \lesssim t^{-4}.
\end{aligned} \end{equation}
We deduce $\Gamma P \in X$ when $T_0$ is large enough.

Next assume $P, P' \in X$. Using the fundamental theorem of calculus, we have
\begin{equation} \label{this-bound}
    \big| b_j^{(n)}(P)- b_j^{(n)}(P') \big|+ \big| d_j^{(n)}(P)- d_j^{(n)}(P') \big| \le t^{-n-1} {\| P- P' \|}_1
\end{equation}
Then by the same calculation as above, we have
\begin{gather}
    |\Gamma \alpha_j(t)- \Gamma \alpha_j'(t)| \lesssim t^{3\epsilon-1} {\| P- P' \|}_1, \quad |\Gamma \beta_j(t)- \Gamma \beta_j'(t)| \lesssim t^{2\epsilon-2} {\| P- P' \|}_1, \\
    |\Gamma \lambda_j(t)- \Gamma \lambda_j'(t)| \lesssim t^{2\epsilon-2} {\| P- P' \|}_1, \quad |\Gamma \mu_j(t)- \Gamma \mu_j'(t)| \lesssim t^{\epsilon-3} {\| P- P' \|}_1, \\
    |\Gamma \delta_j(t)- \Gamma \delta_j'(t)| \lesssim t^{-4} {\| P- P' \|}_1.
\end{gather}
We deduce that $\Gamma$ is a contraction. Then the fixed point of $\Gamma$ in $X$ is the desired solution.
\end{proof}

\;\\
\textbf{Parabolic case:}\\
We take $m_j^{(n)}=0$ for $n \neq 3$ and
\begin{equation}
    m_j^{(3)}= -\frac{2\| Q \|_{L^2}^2}{\lambda_j} \sum_{k \neq j} \frac{\alpha_{jk} \cdot \beta_{jk}}{|\alpha_{jk}|^4}.
    \end{equation}
We will show we can take
\begin{equation} \label{eq first few terms b parabolic}
    b_j^{(1)}= b_j^{(3)}= b_j^{(4)}=0, \quad b_j^{(2)}= -\| Q \|_{L^2}^2 \sum_{k \neq j} \frac{\alpha_{jk}}{|\alpha_{jk}|^4} 
\end{equation}
and 
\begin{equation} \label{eq first few terms d parabolic}
    d_j^{(1)}= d_j^{(2)}= d_j^{(3)}= d_j^{(4)}= d_j^{(5)}= d_j^{(6)}= d_j^{(7)}= 0. 
\end{equation}

Let us write 
\begin{equation} \label{eq f}
    f= \frac{1}{2} \sum_{k \neq j} \psi_{Q^2,k}^{(1)}= -\frac{\| Q \|_{L^2}^2}{2\lambda_j^2} \Big( \sum_{k \neq j} \frac{1}{|\alpha_{jk}|^2} \Big), \quad h= -\lambda_j \sum_{k=1}^m \frac{\partial f}{\partial \alpha_k} \cdot 2\beta_k. 
\end{equation}
Note that $f$ only depends on $\alpha$ and $\lambda_j$. Also $m_j^{(3)}= h$.

Since $\hat{E}_j^{(0)}=0$ and $\big( \sum \limits_{k \neq j} \psi_{Q^2,k}^{(1)} Q, \nabla Q \big)=0$, we take
\begin{equation}
    b_j^{(1)}= d_j^{(1)}=0, \quad T_j^{(1)}= f \Lambda Q.
\end{equation}

Then we have $\hat{E}_j^{(1)}=0$, so we take
\begin{equation}
    b_j^{(2)}= -\| Q \|_{L^2}^2 \sum_{k \neq j} \frac{\alpha_{jk}}{|\alpha_{jk}|^4}, \quad d_j^{(2)}=0, \quad T_j^{(2)}=0. 
\end{equation}

Then we have
\begin{equation} 
    \imm \ \hat{E}_j^{(2)}= \lambda_j^2 \sum_{k=1}^m \frac{\partial T_j^{(1)}}{\partial \alpha_k} \cdot 2\beta_k= -\lambda_j h \Lambda Q 
\end{equation}
and 
\begin{equation} \begin{aligned}
    \re \ \hat{E}_j^{(2)} &= -\lambda_j^4 \big( 2\phi_{QT_j^{(1)}} T_j^{(1)}+ \phi_{|T_j^{(1)}|^2} Q \big)- \lambda_j^4 \sum_{k \neq j} \Big( \psi_{Q^2,k}^{(1)} T_j^{(1)}+ 2\psi_{Q T_k^{(1)}}^{(1)} Q \Big) \\
    &= -\lambda_j^4 \big( 2\phi_{QT_j^{(1)}} T_j^{(1)}+ \phi_{|T_j^{(1)}|^2} Q \big)- \lambda_j^4 \sum_{k \neq j} \psi_{Q^2,k}^{(1)} T_j^{(1)} \\
    &= -\lambda_j^4 f^2 \Big( 2\phi_{Q \Lambda Q} \Lambda Q+ \phi_{(\Lambda Q)^2} Q+ 2\Lambda Q \Big).
\end{aligned} \end{equation}
By the choice of $m_j^{(3)}$ and parity, we take  
\begin{equation}
    b_j^{(3)}= d_j^{(3)}=0, \quad T_j^{(3)}= T_j^{(3)}(\alpha, \lambda) \text{ real valued}.
\end{equation}
Moreover, since 
\begin{equation} \label{eq L+ Lambda2Q}
    -\frac{1}{2} L_+ (\Lambda \Lambda Q- 2\Lambda Q)= 2\phi_{Q \Lambda Q} \Lambda Q+ \phi_{(\Lambda Q)^2} Q+ 2\Lambda Q,
\end{equation} 
we have
\begin{equation}
    T_j^{(3)} = \frac{f^2}{2} (\Lambda \Lambda Q- 2\Lambda Q)- L_+^{-1} \sum_{k \neq j} \psi_{Q^2,k}^{(3)} Q. 
\end{equation}
\;\\
We make the following observation.

\begin{Lemma} \label{lem F_n harmonic}\phantom{a}
\begin{enumerate}
    \item $\Delta_x F_n(\alpha,x)=0$.
    \item If $u$ is continuous and radial then $F_n * u= (\int u) F_n$.
\end{enumerate}
\end{Lemma}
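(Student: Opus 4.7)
For part (1), my plan is to exploit that in dimension $d = 4$ the kernel $|x|^{-2}$ is (a constant multiple of) the Newtonian potential, hence harmonic away from the origin. Concretely, the function $\zeta \mapsto |\alpha - \zeta|^{-2}$ satisfies $\Delta_\zeta |\alpha - \zeta|^{-2} = 0$ on the ball $\{|\zeta| < |\alpha|\}$. On this ball we have the convergent expansion
\[
\frac{1}{|\alpha - \zeta|^2} = \sum_{n = 1}^\infty F_n(\alpha, \zeta),
\]
where each $F_n(\alpha, \cdot)$ is a homogeneous polynomial of degree $n - 1$ in $\zeta$. Applying $\Delta_\zeta$ term by term (justified by uniform convergence on compact subsets of $\{|\zeta|<|\alpha|\}$ together with the polynomial nature of each summand) produces $\sum_n \Delta_\zeta F_n(\alpha, \zeta) = 0$, where $\Delta_\zeta F_n(\alpha, \cdot)$ is a homogeneous polynomial of degree $n-3$. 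The uniqueness of the homogeneous decomposition of the zero function then forces $\Delta_\zeta F_n(\alpha, \zeta) = 0$ for every $n$, which is exactly claim (1) (with $x$ playing the role of $\zeta$).

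For part (2) the plan is to use the spherical mean value property. Since $F_n(\alpha, \cdot)$ is a harmonic polynomial on $\R^4$ by part (1), for every $x \in \R^4$ and $r \geq 0$ we have
\[
\frac{1}{|\sphere^3|}\int_{\sphere^3} F_n(\alpha, x - r\omega)\;\d\sigma(\omega) = F_n(\alpha, x).
\]
Assuming $u$ is continuous, radial, and integrable (so that the convolution makes sense), write $u(y) = \tilde{u}(|y|)$ and pass to spherical coordinates:
\[
(F_n(\alpha, \cdot) * u)(x) = \int_0^\infty \tilde{u}(r)\, r^{3} \int_{\sphere^3} F_n(\alpha, x - r\omega)\;\d\sigma(\omega)\;\d r = F_n(\alpha, x) \int_{\R^4} u(y)\;\d y,
\]
which is the desired identity.

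The two parts combine straightforwardly, and there is no serious analytic obstacle. The only small points to be careful about are the termwise differentiation under the infinite sum in part (1) (handled by the fact that the partial sums are polynomials and the remainder is $O(|\zeta|^N)$ on $\{|\zeta| \leq |\alpha|/2\}$), and the mild integrability/decay hypothesis on $u$ needed to apply Fubini in part (2); both are standard.
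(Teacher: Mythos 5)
Your proof is correct and follows essentially the same route as the paper: for (1), termwise differentiation of the power series expansion of the harmonic function $|\alpha - x|^{-2}$ in $d=4$, using that the $\Delta F_n$ are homogeneous of distinct degrees to isolate each one; for (2), the spherical mean value property for harmonic polynomials applied after passing to polar coordinates. Your explicit remark about needing a mild integrability hypothesis on $u$ is a sensible small addition, but there is no substantive difference from the paper's argument.
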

\begin{proof}
(1) Since $\sum_{n=1}^\infty F_n(\alpha,x)$ is a power series in $x$ with radius of convergence $r \ge \frac{|\alpha|}{3}$, we can differentiate term by term within $B_r= \{ |x|< r \}$. The sum of the power series in $B_r$ equals $\frac{1}{|\alpha-x|^2}$, which is harmonic. Thus we have $\sum \Delta_x F_n=0$ in $B_r$. Since the $\Delta F_n$ are polynomials of different degrees, we have $\Delta_x F_n=0$ in $B_r$ for any $n$. This means $\Delta_x F_n=0$ for any $x$ because $F_n$ is a polynomial.\\[4pt]
(2) We have 
\begin{align*}
    F_n * u(x) &= \int F_n(x-y) u(y) \d y= \int_0^{+\infty} \int_{\partial B_1} F_n(x-r\omega) u(r) r^3 \d \omega \d r \\
    &= F_n(x) \int_0^{+\infty} \int_{\partial B_1} u(r) r^3 \d \omega \d r= \left(\int u\right) F_n(x)
\end{align*}
by the mean value property of harmonic functions.
\end{proof}

For two functions $u$ and $v$, we write $u \equiv v$ if $u-v$ is an even function with exponential decay, and is $L^2$-orthogonal to all radial functions with exponential decay. By the proof of Proposition \ref{prop construction of approximate bubbles}, if we only want to determine the value of $b_j^{(n)}$ and $d_j^{(n)}$, then we can do calculations up to $\equiv$. 

By Lemma \ref{lem F_n harmonic}, we have $\psi_{Q^2,k}^{(3)} Q \equiv 0$, and thus $L_+^{-1} \psi_{Q^2,k}^{(3)} Q \equiv 0$. This means
\begin{equation}
    T_j^{(3)} \equiv \frac{f^2}{2} (\Lambda \Lambda Q- 2\Lambda Q).
\end{equation}

Then we have
\begin{equation} \begin{aligned}
    \hat{E}_j^{(3)}= &- \lambda_j^4 \sum_{k \neq j} \Big( \psi_{Q^2,k}^{(2)} T_j^{(1)}+ 2\psi_{Q T_k^{(1)}}^{(2)} Q \Big)- \lambda_j b_j^{(2)} y_j T_j^{(1)}= 0.
\end{aligned} \end{equation}
By Lemma \ref{lem F_n harmonic}, we also have $\nabla \psi_{Q^2,k}^{(4)} \equiv 0$. Thus we take 
\begin{equation} \begin{gathered}
    b_j^{(4)}= d_j^{(4)}=0, \quad T_j^{(4)}= T_j^{(4)}(\alpha, \lambda) \text{ real valued}
\end{gathered} \end{equation}
with
\begin{equation}
    L_+ T_j^{(4)}= - \sum_{k \neq j} \psi_{Q^2,k}^{(4)} Q.
\end{equation}

Then we have
\begin{equation} \begin{aligned}
    \imm \ \hat{E}_j^{(4)} &= \lambda_j m_j^{(3)} \Lambda T_j^{(1)}+ \lambda_j^2 \sum_{k=1}^m \bigg( \frac{\partial T_j^{(3)}}{\partial \alpha_k} \cdot 2\beta_k+ \frac{\partial T_j^{(1)}}{\partial \alpha_k} \cdot 4\mu_k \alpha_k+ \frac{\partial T_j^{(1)}}{\partial \lambda_k} \big( m_k^{(3)}- 4\mu_k \lambda_k \big) \bigg) \\
    &\equiv \lambda_j fh \Lambda \Lambda Q- \lambda_j fh (\Lambda \Lambda Q- 2\Lambda Q)+ \lambda_j^2 \sum_{k=1}^m \Big( \frac{\partial f}{\partial \alpha_k} \cdot 4\mu_k \alpha_k \Big) \Lambda Q- 2\lambda_j f \Lambda Q(h- 4\mu_j \lambda_j) \\
    &\equiv 4\lambda_j^2 \bigg( 2\mu_j f+ \sum_{k=1}^m \Big( \frac{\partial f}{\partial \alpha_k} \cdot \mu_k \alpha_k \Big) \bigg) \Lambda Q
\end{aligned} \end{equation}
and
\begin{equation} \begin{aligned}
    \re \ \hat{E}_j^{(4)} &=- \lambda_j^4 \Big( 2\phi_{QT_j^{(3)}} T_j^{(1)}+ 2\phi_{T_j^{(1)} T_j^{(3)}}Q+ 2\phi_{Q T_j^{(1)}} T_j^{(3)}+ \phi_{|T_j^{(1)}|^2} T_j^{(1)} \Big) \\
    &\quad -\lambda_j^4 \sum_{k \neq j} \Big( \psi_{Q^2,k}^{(3)} T_j^{(1)}+ 2\psi_{QT_k^{(1)}}^{(3)} Q+ \psi_{Q^2,k}^{(1)} T_j^{(3)} \\
    &\qquad \qquad \qquad+ 2\psi_{QT_k^{(3)}}^{(1)} Q+ \psi_{|T_k^{(1)}|^2}^{(1)} Q+ 2\psi_{QT_k^{(1)}}^{(1)} T_j^{(1)} \Big) \\
    &\equiv -\lambda_j^4 f^3 \Big( \phi_{Q( \Lambda \Lambda Q- 2\Lambda Q)} \Lambda Q+ \phi_{\Lambda Q( \Lambda \Lambda Q- 2\Lambda Q)} Q \\
    &\qquad \qquad \ \ + \phi_{Q \Lambda Q} (\Lambda \Lambda Q- 2\Lambda Q)+ \phi_{(\Lambda Q)^2} \Lambda Q+ (\Lambda \Lambda Q- 2\Lambda Q) \Big).
\end{aligned} \end{equation}
Thus we take
\begin{equation}
    b_j^{(5)}= d_j^{(5)}=0, \quad X_j^{(5)}= X_j^{(5)}(\alpha, \lambda), \quad Y_j^{(5)}= Y_j^{(5)}(\alpha, \beta, \lambda) .
\end{equation}
Moreover, since 
\begin{align}
    -L_+(\Lambda \Lambda \Lambda Q- 6\Lambda \Lambda Q+ 8\Lambda Q) &= \phi_{Q( \Lambda \Lambda Q- 2\Lambda Q)} \Lambda Q+ \phi_{\Lambda Q( \Lambda \Lambda Q- 2\Lambda Q)} Q \\
    &\quad + \phi_{Q \Lambda Q} (\Lambda \Lambda Q- 2\Lambda Q)+ \phi_{(\Lambda Q)^2} \Lambda Q+ (\Lambda \Lambda Q- 2\Lambda Q),
\end{align} 
we have  
\begin{equation}
    X_j^{(5)} \equiv \frac{f^3}{6} (\Lambda \Lambda \Lambda Q- 6\Lambda \Lambda Q+ 8\Lambda Q), \quad Y_j^{(5)} \equiv -\lambda_j^{-2} \bigg( 2\mu_j f+ \sum_{k=1}^m \Big( \frac{\partial f}{\partial \alpha_k} \cdot \mu_k \alpha_k \Big) \bigg) |y_j|^2 Q.
\end{equation}

Then we have 
\begin{equation}
    \imm \ \hat{E}_j^{(5)}= \lambda_j^2 \sum_{k=1}^m \frac{\partial T_j^{(4)}}{\partial \alpha_k} \cdot 2\beta_k.
\end{equation} 
Since $T_j^{(4)}$ is odd, $\imm \big( \hat{E}_j^{(5)}, Q \big)=0$, so we take $d_j^{(6)}=0$. 

Then we have
\begin{equation} \begin{aligned}
    \imm \ \hat{E}_j^{(6)} &= -2\lambda_j^4 \phi_{Q T_j^{(1)}} Y_j^{(5)}- \lambda_j^4 \sum_{k \neq j} \psi_{Q^2,k}^{(1)} Y_j^{(5)}+ \lambda_j m_j^{(3)} \Lambda T_j^{(3)} \\
    &\quad +\lambda_j^2 \sum_{k=1}^m \bigg( \frac{\partial X_j^{(5)}}{\partial \alpha_k} \cdot 2\beta_k+ \frac{\partial T_j^{(3)}}{\partial \alpha_k} \cdot 4\mu_k \alpha_k+ \frac{\partial T_j^{(3)}}{\partial \lambda_k} \big( m_k^{(3)}- 4\mu_k \lambda_k \big) \bigg) \\
    &= -2\lambda_j^4 f \phi_{Q \Lambda Q} Y_j^{(5)}- 2\lambda_j^4 f Y_j^{(5)}+ \lambda_j m_j^{(3)} \Lambda T_j^{(3)} \\
    &\quad + \lambda_j^2 \sum_{k=1}^m \frac{\partial X_j^{(5)}}{\partial \alpha_k} \cdot 2\beta_k+ \lambda_j^2 \sum_{k=1}^m \frac{\partial T_j^{(3)}}{\partial \alpha_k} \cdot 4\mu_k \alpha_k+ \lambda_j^2 \frac{\partial T_j^{(3)}}{\partial \lambda_j} \big( m_j^{(3)}- 4\mu_j \lambda_j \big) \\
    &\equiv 2\lambda_j^2 f \bigg( 2\mu_j f+ \sum_{k=1}^m \Big( \frac{\partial f}{\partial \alpha_k} \cdot \mu_k \alpha_k \Big) \bigg) (\phi_{Q \Lambda Q}+ 1) |y_j|^2 Q+ \frac{1}{2} \lambda_j f^2 h \Lambda (\Lambda \Lambda Q- 2\Lambda Q) \\
    &\quad - \frac{1}{2} \lambda_j f^2 h (\Lambda \Lambda \Lambda Q- 6\Lambda \Lambda Q+ 8\Lambda Q)- 2\lambda_j f^2 h (\Lambda \Lambda Q- 2\Lambda Q) \\
    &\quad + 4\lambda_j^2 f \bigg( 2\mu_j f+ \sum_{k=1}^m \Big( \frac{\partial f}{\partial \alpha_k} \cdot \mu_k \alpha_k \Big) \bigg) (\Lambda \Lambda Q- 2\Lambda Q) \\
    &\equiv 2\lambda_j^2 f \bigg( 2\mu_j f+ \sum_{k=1}^m \Big( \frac{\partial f}{\partial \alpha_k} \cdot \mu_k \alpha_k \Big) \bigg) \big( \phi_{Q\Lambda Q} |y_j|^2 Q+ |y_j|^2 Q +2(\Lambda \Lambda Q- 2\Lambda Q) \big).
\end{aligned} \end{equation}
Since $L_-(\Lambda Q)= -2\phi_{Q \Lambda Q}Q- 2Q$ and $L_-(|y_j|^2Q)= -4\Lambda Q$, we have
\begin{align}
    \Big( \phi_{Q\Lambda Q} |y_j|^2 Q+ |y_j|^2 Q +2(\Lambda \Lambda Q- 2\Lambda Q), Q \Big)= \big( \Lambda Q, -\frac{1}{2} L_-(|y_j|^2 Q) \big)- 2(\Lambda Q, \Lambda Q)=0
\end{align}
by integration by parts. Thus we take $d_j^{(7)}=0$.

\begin{Prop} \label{prop parabolic trajectory}
Let $(\alpha^\infty, \beta^\infty)$ be a parabolic solution of \eqref{m-body}. Assume $\lambda^\infty \in \R_+^m$ and $\mu^\infty= \delta^\infty=0$. Then there exists a solution $P^{(N)}$ of \eqref{eq parameters} such that 
\begin{equation}
    \Big| P^{(N)}(t)- P^\infty(t) \Big| \to 0 \text{ as } t \to +\infty.
\end{equation}
\end{Prop}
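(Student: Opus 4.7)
The plan is to mirror the contraction mapping scheme of Proposition \ref{prop hyperbolic trajectory}, with weights tailored to the parabolic asymptotics $a(t) \sim t^{1/2}$, $|\alpha_j^\infty| \sim t^{1/2}$, $|\beta_j^\infty| \sim t^{-1/2}$ and $\mu^\infty = \delta^\infty = 0$. Fix a small $\epsilon > 0$ and a large $T_0$ and work in
\[
X = \{ P \in C([T_0, \infty), \Omega) : \|P - P^\infty\|_\star \le 1 \},
\]
with
\[
\|P\|_\star := \sum_{j=1}^m \sup_{t \ge T_0}\Big( t^{\frac12 - 5\epsilon}|\alpha_j| + t^{\frac32 - 5\epsilon}|\beta_j| + t^{1 - 3\epsilon}|\lambda_j| + t^{2 - 2\epsilon}|\mu_j| + t^{3 - \epsilon}|\delta_j| \Big).
\]
The weights reflect the natural sizes obtained from integrating \eqref{eq parameters}: since $D_j^{(N)}$ is admissible of degree $\ge 9$ (using $d_j^{(n)} = 0$ for $n \le 7$), one has $|\delta_j| \lesssim \int t^{-9/2}\,d\tau \sim t^{-7/2}$, then $|\mu_j| \lesssim \int \lambda^4 |\delta_j|\,d\tau \sim t^{-2}$, and so on up the chain.

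The solution map $\Gamma : X \to X$ is defined by integrating \eqref{eq parameters} backward from $+\infty$, subtracting the $m$-body background in the equations whose right-hand sides are not a priori integrable:
\begin{align*}
\Gamma \alpha_j(t) &= \alpha_j^\infty(t) + \int_t^{\infty} \big(2\beta_j^\infty - 2\beta_j - 4\mu_j \alpha_j \big)\, d\tau, \\
\Gamma \beta_j(t) &= \beta_j^\infty(t) + \int_t^\infty \big(b_j^{(2)}(P^\infty) - B_j^{(N)}(P) + 4\mu_j \beta_j + \lambda_j^4 \delta_j \alpha_j \big)\, d\tau, \\
\Gamma \lambda_j(t) &= \lambda_j^\infty + \int_t^\infty \big(4 \lambda_j \mu_j - M_j^{(N)}(P)\big)\, d\tau, \\
\Gamma \mu_j(t) &= \int_t^\infty \big(4 \mu_j^2 - \lambda_j^4 \delta_j\big)\, d\tau, \\
\Gamma \delta_j(t) &= -\int_t^\infty D_j^{(N)}(P)\, d\tau.
\end{align*}
The cancellations $\dot\alpha_j^\infty = 2\beta_j^\infty$ and $\dot\beta_j^\infty = b_j^{(2)}(P^\infty)$ are exactly the parabolic $m$-body law, and this is what makes the integrands in $\Gamma\alpha_j$ and $\Gamma\beta_j$ integrable, even though $\beta_j^\infty \sim t^{-1/2}$ and $b_j^{(2)}(P^\infty) \sim t^{-3/2}$ are not.

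To check $\Gamma(X)\subset X$, I will use the parabolic analogue of Lemma \ref{lem properties of admissible functions-2}, which (from the parabolic degree convention) gives $|\sigma(P)| \lesssim t^{-n/2}$ for $\sigma \in S_n$ and $P \in X$. Combined with $b_j^{(n)}, d_j^{(n)} \in S_{n+1}$ and the vanishings $b_j^{(1)} = b_j^{(3)} = b_j^{(4)} = 0$, $d_j^{(n)} = 0$ for $n \le 7$ established in Section~\ref{sec:traje}, each integrand decays strictly faster than $t^{-1}$: for instance the integrand of $\Gamma\beta_j - \beta_j^\infty$ is $O(t^{-5/2 + 5\epsilon})$ (coming from $\mu_j\beta_j$, $\lambda^4\delta_j\alpha_j$, and the Lipschitz remainder $b_j^{(2)}(P) - b_j^{(2)}(P^\infty)$) plus $\sum_{n \ge 5} b_j^{(n)}(P) = O(t^{-3})$, and integration returns the weight $t^{-3/2 + 5\epsilon}$. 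Analogous calculations for $\alpha, \lambda, \mu, \delta$ give the remaining weights, so $\Gamma(X) \subset X$ for $T_0$ large. For the contraction step, using Taylor expansion and the parabolic degree counting, differentiating $\sigma \in S_{n+1}$ with respect to $\alpha_k, \beta_k, \mu_k, \delta_k$ lowers the exponent in a controlled way, yielding Lipschitz bounds of the form
\[
|b_j^{(n)}(P) - b_j^{(n)}(P')| + |d_j^{(n)}(P) - d_j^{(n)}(P')| \lesssim t^{-\frac{n+1}{2} + O(\epsilon)}\, \|P - P'\|_\star,
\]
and similar control on the other nonlinear terms. Repeating the previous estimates on differences produces $\|\Gamma P - \Gamma P'\|_\star \le \tfrac12 \|P - P'\|_\star$ for $T_0$ large; Banach's fixed point theorem then supplies $P^{(N)} \in X$ solving \eqref{eq parameters}, and the weights force $|P^{(N)}(t) - P^\infty(t)| \to 0$ as $t \to +\infty$.

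The main obstacle --- absent in the hyperbolic case --- is the non-integrability of the parabolic background forcings $\beta_j^\infty$ and $b_j^{(2)}(P^\infty)$: the fixed-point argument cannot simply integrate the raw right-hand side of \eqref{eq parameters}, but must be built on the cancellations $\dot\alpha_j^\infty = 2\beta_j^\infty$ and $\dot\beta_j^\infty = b_j^{(2)}(P^\infty)$ from the $m$-body law. In addition, because $a \sim t^{1/2}$ each parabolic degree in $S_n$ buys only half a power of $t^{-1}$, which is why so many low-order coefficients $b_j^{(n)}$, $d_j^{(n)}$ had to be forced to vanish in the construction of Section~\ref{sec:approx}; without those vanishings the integrands defining $\Gamma\beta_j$ and $\Gamma\delta_j$ would fail to be integrable against the weights of $\|\cdot\|_\star$.
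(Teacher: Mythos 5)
The contraction-mapping scheme you propose does not close in the parabolic case, and this is precisely the point where the paper's proof departs from the hyperbolic argument. Your key claim is that the Lipschitz estimates, combined with $T_0$ large, give $\|\Gamma P - \Gamma P'\|_\star \le \frac12\|P-P'\|_\star$. For the $\alpha\leftrightarrow\beta$ coupling via $b_j^{(2)}$ this fails: $b_j^{(2)}$ is homogeneous of degree $-3$ in $\alpha$, so $\nabla_\alpha b_j^{(2)}(\alpha^\infty)\sim t^{-2}$ on a parabolic orbit. With your weights, $|\alpha-\alpha'|\lesssim t^{-1/2+5\epsilon}\|P-P'\|_\star$, hence $|b_j^{(2)}(P)-b_j^{(2)}(P')|\lesssim t^{-5/2+5\epsilon}\|P-P'\|_\star$ and
\begin{equation}
    t^{3/2-5\epsilon}\int_t^\infty |b_j^{(2)}(P)-b_j^{(2)}(P')|\,d\tau \;\lesssim\; \frac{1}{\tfrac32-5\epsilon}\,\|P-P'\|_\star,
\end{equation}
with an implicit constant that is \emph{independent of $T_0$}. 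Similarly the $\alpha$-equation produces a factor $\frac{2}{1/2-5\epsilon}\approx 4$. Composing the loop $\alpha\to\beta\to\alpha$ gives a Lipschitz constant that is $O(1)$, depending on $\nabla_\alpha b_j^{(2)}(\alpha^\infty)$, and that cannot be beaten down by taking $T_0$ large. (In the hyperbolic case the analogous factor does tend to zero as $T_0\to\infty$ because there $\nabla_\alpha b_j^{(2)}\sim t^{-4}$, which is genuinely off the critical scaling.)

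The underlying reason is that after subtracting the $m$-body background, the $\alpha$-$\beta$ block linearizes to the Euler-type system $\dot x = y + \ldots$, $\dot y = A x/t^2 + \ldots$, which is scale-invariant in $t$. Your uniform choice of integrating from $t$ to $\infty$ is not even the correct Green's function for this system: depending on the eigenvalues of $A$ (equivalently the roots of $\lambda^2-\lambda=c_j$), some modes must be integrated from $T_0$ (or $1$) to $t$ rather than from $t$ to $\infty$ in order to land in the decaying solution branch. This is exactly what the paper's Lemma~\ref{lem ODE} does: after Taylor-expanding $b_j^{(2)}(P)-b_j^{(2)}(P^\infty)=\frac{A(\alpha-\alpha^\infty)}{t^2}+O(t^{-3-2\epsilon})$, it diagonalizes $A$, constructs the resolvent via $G_{a,b}$ with a sign-dependent choice of integration limits, and only then sets up the contraction for the full nonlinear remainder. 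Without this step, the contraction factor for the $\alpha$-$\beta$ block is not under control, so the fixed-point argument as written does not deliver a solution. (Your observations that the background forcings $\beta_j^\infty$ and $b_j^{(2)}(P^\infty)$ must be subtracted, and that the vanishings $b_j^{(3)}=b_j^{(4)}=0$, $d_j^{(n)}=0$ for $n\le 7$ are needed for integrability, are both correct, but they are subordinate to the missing linear ODE analysis.)
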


\begin{proof}
Take $\epsilon= \frac{1}{100}$. Define $Y= \Big\{ P \in C \big( [T_0,+\infty), \Omega \big) \ \big| \ {\| P-P^\infty \|}_2 \le 1 \Big\}$, where
\begin{equation}
    {\| P \|}_2:= \sum_{j=1}^m \sup_{t \ge T_0} \Big( t^{1- 3\epsilon} |\alpha_j(t)|+ t^{2- 3\epsilon} |\beta_j(t)|+ t^{2-3\epsilon} |\lambda_j(t)|+ t^{\frac{5}{2}-2\epsilon} |\mu_j(t)|+ t^{\frac{7}{2}-\epsilon} |\delta_j(t)| \Big).
\end{equation}
We want to find a solution of \eqref{eq parameters} in $Y$. 

Recall that $M_j^{(N)}= m_j^{(3)}$ and we have \eqref{eq first few terms b parabolic} and \eqref{eq first few terms d parabolic}. In contrast to the hyperbolic case, we need to estimate $b_j^{(2)}(P)- b_j^{(2)}(P^\infty)$ more delicately. By the Taylor formula, we have
\begin{equation}
    b_j^{(2)}(P)- b_j^{(2)}(P^\infty)= -\| Q \|_{L^2}^2 \sum_{k \neq j} \left( \frac{\alpha_{jk}- \alpha_{jk}^\infty}{|\alpha_{jk}^\infty|^4}- \frac{4\alpha_{jk}^\infty \cdot (\alpha_{jk}- \alpha_{jk}^\infty)}{|\alpha_{jk}^\infty|^6} \alpha_{jk}^\infty \right)+ O(t^{-\frac{7}{2}+ 6\epsilon}).
\end{equation}
Thus there exists $A_j \in \R^{4 \times 4m}$ such that
\begin{equation}
    b_j^{(2)}(P)- b_j^{(2)}(P^\infty)= \frac{A_j (\alpha- \alpha^\infty)}{t^2}+ O(t^{-3-2\epsilon}),
\end{equation}
where $\alpha$ is understood as a column vector.

Set $A=(A_1^T, \cdots, A_m^T)^T \in \R^{4m \times 4m}$. Then \eqref{eq parameters} can be rewritten as

\begin{equation}
    \left\{ \begin{aligned}
        &\dot{\alpha}- \dot{\alpha}^\infty= 2(\beta- \beta^\infty)+ O(t^{2\epsilon- 2}), \\
        &\dot{\beta}- \dot{\beta}^\infty= \frac{A (\alpha- \alpha^\infty)}{t^2} + O(t^{2\epsilon- 3}), \\
        &\dot{\lambda}= O(t^{2\epsilon-2}), \ \dot{\mu}= O(t^{\epsilon-\frac{7}{2}}), \ \dot{\delta}= O(t^{-\frac{9}{2}}). 
    \end{aligned} \right.
\end{equation}
Then we need to use a modified version of Lemma 5.1 in \cite{Wu}.

\begin{Lemma} \label{lem ODE}
Let $0 < \delta< \kappa$, $n \in \mathbb{N}$, $A \in \mathbb{R}^{n \times n}$, $F_1 \in C^1 \big( \mathbb{R}_+ \times \mathbb{R}^n; \mathbb{R}^n \big)$ and $F_2 \in C^1 \big( \mathbb{R}_+ \times \mathbb{R}^{n+n}; \mathbb{R}^n \big)$. Assume 
\begin{equation} \label{eq uniform C^1 estimate of F_1} \begin{gathered}
    \sup_{|x| \le t^{-\delta}} \Big( |F_1(t,x)|+ t|\partial_t F_1(t,x)|+ |\nabla_x F_1(t,x)| \Big) \le t^{-1-\kappa}, \quad \forall t>0.
\end{gathered} \end{equation}
\begin{equation} \label{eq uniform C^1 estimate of F_2} 
    \sup_{|x|+t|y| \le t^{-\delta}} \Big( |F_2(t,x,y)|+ |\nabla_x F_2(t,x,y)|+ |\nabla_y F_2(t,x,y)| \Big) \le t^{-2-\kappa}, \quad \forall t>0.
\end{equation}
Then there exists $T>0$ and $x,y \in C^1 \big( [T,+\infty), \mathbb{R}^n \big)$ such that
\begin{equation}
    \left\{ \begin{aligned}
        \dot{x}(t) &= y(t)+ F_1(t,x(t)), \\
        \dot{y}(t) &= \frac{A x(t)}{t^2}+ F_2(t, x(t), y(t)) 
    \end{aligned} \right.
    \quad \text{and} \quad 
    \left\{ \begin{aligned}
        |x(t)| &\le t^{-\delta}, \\
        |y(t)| &\le t^{-1-\delta},
    \end{aligned} \right. \quad \forall t \ge T. 
\end{equation}
\end{Lemma}

\begin{proof}
We may assume $x,y$ and $F_1, F_2$ take complex values. If the complex counterpart is proved, then the lemma is proved by taking real parts. 

First assume $A$ is diagonalizable over $\C$ and $-\frac{1}{4}$ is not an eigenvalue. Let $v_1, \cdots, v_n$ be independent eigenvectors of $A$ and $c_1, \cdots, c_n$ be corresponding eigenvalues. Let $a_j, b_j$ be the two roots of $\lambda^2- \lambda= c_j$. Since $c_j \neq -\f14$, we have $a_j \neq b_j$. 

Let us consider the space 
\begin{equation}
    B= \Big\{ (x,y) \in C^1  \big( [T,+\infty), \C^{n+n} \big) \ \big| \ |x(t)|+ t|y(t)| \le t^{-\delta} \Big\}
\end{equation}
with norm ${\| \cdot \|}_3$ defined by
\begin{equation}
    {\| (x,y) \|}_3 := \sup_{t \ge T} \Big( t^\delta |x(t)|+ t^{1+\delta} |y(t)| \Big).
\end{equation}

For $(x,y) \in B$ we write
\begin{equation}
    \partial_t F_1(t,x)+ \nabla_x F_1(t,x) \cdot y+ F_2(t, x, y)= \sum_{j=1}^n f_j(t,x,y) v_j.
\end{equation}
Note here that $f_j$ also satisfies \eqref{eq uniform C^1 estimate of F_2}. We now define $\Gamma (x,y)$ by
\begin{equation} \begin{aligned}
    (\Gamma x)(t) &= \sum_{j=1}^n G_{a_j,b_j}f_j(t,x,y) v_j, \\
    (\Gamma y)(t) &= \frac{\d}{\d t} \sum_{j=1}^n G_{a_j,b_j}f_j(t,x,y) v_j- F_1(t,x(t)),
\end{aligned} \end{equation}
where 
\begin{equation}
    G_{a,b}f(t,x,y)= \frac{G_a f(t,x,y)- G_b f(t,x,y)}{a-b}
\end{equation}
and
\begin{equation}
    G_a f(t,x,y)= \left\{ \begin{aligned}
        &\ t^a \int_1^t \tau^{1-a} f(\tau, x(\tau), y(\tau)) \d \tau, && \mathrm{Re}(a) \le -\kappa, \\
        &-t^a \int_t^\infty \tau^{1-a} f(\tau, x(\tau), y(\tau)) \d \tau, && \mathrm{Re}(a) >- \kappa.
    \end{aligned} \right. 
\end{equation}
Using \eqref{eq uniform C^1 estimate of F_2} and direct calculation, we see $\Gamma: B \to B$ and is a contraction, so there exists a fixed point of $\Gamma$. This fixed point solves the ODE since
\begin{equation} \begin{aligned}
    \frac{\d}{\dt} (\Gamma x) &= \Gamma y+ F_1(t,x), \\
    \frac{\d}{\dt} (\Gamma y) &= \frac{A}{t^2} (\Gamma x)+ F_2(t,x,y). 
\end{aligned} \end{equation}

For the general case, let $\tilde{A}$ be within the previous case and $\| A- \tilde{A} \|< c_0$ and replace $F_2$ by $F_2+ \frac{(A-\tilde{A})x}{t^2}$. If $c_0$ is small enough, then $\Gamma$ defined similarly to the above will be a contraction, and the above argument can be applied to conclude.
\end{proof}

Let us go back to the proof of the proposition. The lemma accounts for solving $\alpha$ and $\beta$. (Note that $F_1= \mu(t) x+ \mu(t) \alpha^\infty(t)$ with $x=\alpha- \alpha^\infty$ satisfies the condition.) Then we use the argument in the proof of Proposition \ref{prop hyperbolic trajectory} in order to deal with $\lambda$, $\mu$ and $\delta$. 
\end{proof}

\begin{Rem} \label{rmk P^N- P^infty}
For $P^{(N)}$ constructed in Proposition \ref{prop hyperbolic trajectory} and \ref{prop parabolic trajectory}, using \eqref{eq parameters} and information about the first few terms \eqref{eq first few terms hyperbolic}, \eqref{eq first few terms b parabolic} and \eqref{eq first few terms d parabolic}, we have the following more precise asymptotic behavior:
\begin{equation}
    |\alpha^{(N)}(t)- \alpha^\infty(t)| =o(t^{-1+}),\; |\beta^{(N)}(t)- \beta^\infty(t)|= o(t^{-2+}), \; |\lambda^{(N)}(t)- \lambda^\infty|=o(t^{-2+}).
\end{equation}
Moreover,
\begin{itemize}
    \item in the hyperbolic case, we have 
    \begin{align} \label{hyp N est}
        |\alpha^{(N)}(t) | \lesssim t,\; |\beta^{(N)}(t)| \lesssim 1, \; \lambda^{(N)}(t) \sim 1,\;|\mu^{(N)}(t)| \lesssim t^{-2},\;|\delta^{(N)}(t)| \lesssim t^{-3},
    \end{align}
    \item in the parabolic case, we have 
    \begin{equation}\label{parab N est}
        |\alpha^{(N)}(t) | \lesssim t^{\f12},\; |\beta^{(N)}(t)| \lesssim t^{-\f12}, \; \lambda^{(N)}(t) \sim 1,\; |\mu^{(N)}(t)| \lesssim t^{-2},\; |\delta^{(N)}(t)| \lesssim t^{-3},
    \end{equation}
\end{itemize}
where the implicit constants only depend on $P^\infty$.
\end{Rem}

In summary, we have constructed $V_j^{(N)}(t,x)$ and $P^{(N)}(t)$ such that $S_j^{(N)}(t,x)=0$ and $\| V_j^{(N)}- Q \|_{H^1} \to 0$, $\| P^{(N)}- P^\infty \| \to 0$ as $t \to +\infty$. Furthermore, the conclusion of Proposition \ref{prop accuracy of approximate solution} holds true. 

\section{Bootstrap reduction}\label{sec:bootstrap}
In this section we introduce the main bootstrap argument, see Proposition \ref{prop uniform estimate}, which will then essentially require to bootstrap the modulation parameters and the error function, see Proposition \ref{prop bootstrap} below.\\[4pt]
Let us first reduce the proofs of Theorem \ref{thm hyperbolic} and Theorem \ref{thm parabolic} to the following uniform backwards estimate. 

\begin{Prop}[Uniform bound] \label{prop uniform estimate}
Let $P^{(N)}$ be defined as in Proposition \ref{prop hyperbolic trajectory} or \ref{prop parabolic trajectory} and $\gamma_j^{(N)}(t)$ be such that
\begin{equation}
    \gamma_j^{(N)}(0)=0, \ \dot{\gamma}_j^{(N)}= (\lambda_j^{(N)})^2- |\beta_j^{(N)}|^2- \big( \dot{\beta}_j^{(N)}+ 4\mu_j^{(N)} \beta_j^{(N)} \big) \cdot \alpha_j^{(N)}- \big( \dot{\mu}_j^{(N)}+ 4(\mu_j^{(N)})^2 \big) |\alpha_j^{(N)}|^2.
\end{equation}

Let $T_n \to +\infty$ and $u_n$ be the solution to 
\begin{equation} \label{eq equation of un}
    \left\{ \begin{aligned}
        &i\partial_t u_n+ \Delta u_n- \phi_{|u_n|^2} u_n= 0, \\
        &u_n(T_n,\cdot)= R_{g^{(N)}} ^{(N)}(T_n,\cdot),
    \end{aligned} \right.
\end{equation}
with backwards maximal interval $(t_n, T_n]$.
Then there exists $T_0=T_0(N)$ such that for $N$ large and $T_* \in [T_0,T_n] \cap (t_n, T_n]$, if
\begin{equation} \label{eq uniform estimate, bootstrap}
    \left\| u_n(t)- R_{g^{(N)}}^{(N)}(t) \right\|_{H^1} \le 2t^{-\frac{N}{9}}, \qquad \forall n\ge 1,\ \forall t \in [T_*,T_n],
\end{equation}
then
\begin{equation}
    \left\| u_n(t)- R_{g^{(N)}}^{(N)}(t) \right\|_{H^1} \le t^{-\frac{N}{9}}, \qquad \forall n\ge 1,\ \forall t \in [T_*,T_n].
\end{equation}
\end{Prop}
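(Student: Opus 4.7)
\emph{Proof plan.} Set $R := R_{g^{(N)}}^{(N)}$ and $\varepsilon_n := u_n - R$. Since $g^{(N)}$ solves the reduced system \eqref{eq parameters}, the modulation error $S_j^{(N)}$ in \eqref{eq definition of Psi} vanishes, giving
\[
i\partial_t R + \Delta R - \phi_{|R|^2} R = \Psi^{(N)}.
\]
By Proposition \ref{prop accuracy of approximate solution} together with Remark \ref{rmk P^N- P^infty}, $\|\Psi^{(N)}(t)\|_{H^1} \lesssim a(t)^{-(N+2)}$, i.e.\ $\lesssim t^{-(N+2)}$ in the hyperbolic case and $\lesssim t^{-(N+2)/2}$ in the parabolic case. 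Since $\varepsilon_n(T_n) = 0$, the plan is to absorb $\varepsilon_n$ backwards from $T_n$ by a modulation-plus-coercivity energy estimate in the spirit of \cite{KRM, Wu}.

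First, for each $t \in [T_*, T_n]$, I choose a modulated path $g(t)$ near $g^{(N)}(t)$, with $g(T_n) = g^{(N)}(T_n)$, via an implicit function argument, so that $\varepsilon(t) := u_n(t) - R_{g(t)}^{(N)}(t)$ is $L^2$-orthogonal at each bubble to the full generalized root space of the linearized flow, namely
\[
\{ g_j iQ,\; g_j \Lambda Q,\; g_j \partial_{x_k}Q,\; g_j i|x|^2 Q,\; g_j \rho,\; g_j ix_k Q \}, \quad j = 1,\dots,m,\ k = 1,\dots,4,
\]
from Lemma \ref{lem straight}. The number of scalar conditions equals $\dim g_j$, and Lemma \ref{lem Q mod bound} then gives $\|\varepsilon - \varepsilon_n\|_{H^1} \lesssim |g - g^{(N)}| \lesssim \|\varepsilon_n\|_{H^1}$. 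Projecting the equation for $\varepsilon$ onto each root space element and inverting using Lemmas \ref{non-deg}--\ref{Inversion-of-L} produces the modulation ODE
\[
|\dot g(t) - \dot g^{(N)}(t)| \lesssim \|\varepsilon(t)\|_{L^2_{\mathrm{loc}}} + \|\Psi^{(N)}(t)\|_{L^2} + \|\varepsilon(t)\|_{H^1}^2.
\]

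Next I set up the nonlocal energy functional
\[
\mathcal{F}(t) := \int \bigl( |\nabla \varepsilon|^2 + |\varepsilon|^2 \bigr)\,dx - \int \bigl( \phi_{|R|^2} |\varepsilon|^2 + 2\phi_{\re(R\bar\varepsilon)} \re(R \bar\varepsilon) \bigr)\,dx,
\]
which, under the above orthogonality and Lemma \ref{lem coercivity} applied bubble-by-bubble using $a(t) \to \infty$, satisfies $\mathcal{F}(t) \sim \|\varepsilon(t)\|_{H^1}^2$ (the distant-bubble cross-couplings in $\phi_{|R|^2}$ contribute $O(a^{-2}\|\varepsilon\|_{L^2}^2)$, absorbable for $T_0$ large). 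Differentiating $\mathcal{F}$ along \eqref{H} and substituting the modulation ODE gives
\[
\bigl|\tfrac{d}{dt}\mathcal{F}(t)\bigr| \lesssim \|\Psi^{(N)}\|_{H^1}\|\varepsilon\|_{H^1} + |\dot g - \dot g^{(N)}|\,\|\varepsilon\|_{H^1} + \|\varepsilon\|_{H^1}^3.
\]
Since $\mathcal{F}(T_n) = 0$, a square-root/Gr\"onwall argument combined with the bound on $\Psi^{(N)}$ and the bootstrap assumption \eqref{eq uniform estimate, bootstrap} yields $\|\varepsilon(t)\|_{H^1} \lesssim t^{-N/2}$ in the parabolic (worst) case, strictly better than $\tfrac12 t^{-N/9}$ for $N$ and $T_0$ large. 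Combined with the first step this closes the bootstrap on $\varepsilon_n$.

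The main obstacle is the \emph{degeneracy} in the generalized root directions flagged in the abstract: Lemma \ref{lem coercivity} controls $\varepsilon$ only modulo the three real projections onto $Q$, $xQ$, and $|x|^2 Q$, whereas the modulation uses six real parameters per bubble, the last three of which ($\lambda_j,\mu_j,\delta_j$) couple only weakly through $\Lambda Q$, $|x|^2 Q$, and $\rho$. Matching these orthogonality conditions to extract a clean coercive estimate, simultaneously controlling the nonlocal inter-bubble cross-terms at scale $a(t)^{-2}$, and inverting for $(\dot\mu_j, \dot\delta_j)$ with a constant small enough that $\Psi^{(N)}$ dominates the self-induced growth, is the technical crux; the approximation scheme of Section \ref{sec:approx}, and in particular the construction in Proposition \ref{prop construction of approximate bubbles}, is tuned precisely to enable this.
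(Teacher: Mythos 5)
Your overall framework---choose a modulated path by orthogonality, set up a coercive energy functional, derive a modulation ODE, close by a Gr\"onwall/bootstrap---matches the paper's strategy (implemented there via Lemma \ref{lem orthogonality}, Proposition \ref{prop bootstrap} and Section \ref{sec:mod}). However, there is a concrete gap in your choice of orthogonality conditions. You impose $L^2$-orthogonality to the generalized root space elements $\{g_j iQ,\ g_j\Lambda Q,\ g_j\nabla Q,\ g_j i|x|^2 Q,\ g_j\rho,\ g_j ixQ\}$ themselves. Unscaling to $\tilde\varepsilon_j=g_j^{-1}\varepsilon$, this gives $\re\tilde\varepsilon_j\perp\{\Lambda Q,\nabla Q,\rho\}$ and $\imm\tilde\varepsilon_j\perp\{Q,xQ,|x|^2Q\}$. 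But Lemma \ref{lem coercivity}, which you invoke for coercivity, requires precisely the opposite pairing: $\re\tilde\varepsilon_j\perp\{Q,xQ,|x|^2Q\}$ for the $L_+$ form and $\imm\tilde\varepsilon_j\perp\rho$ for $L_-$. The correct choice is therefore the \emph{dual} set---the one actually used in Lemma \ref{lem orthogonality}:
\begin{equation}
\re(\varepsilon, g_jQ)=\re(\varepsilon,g_jxQ)=\re(\varepsilon,g_j|x|^2Q)=\imm(\varepsilon,g_j\Lambda Q)=\imm(\varepsilon,g_j\nabla Q)=\imm(\varepsilon,g_j\rho)=0.
\end{equation}
With your conditions, the quadratic form is not coercive and the argument fails at the crucial step.

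There are two further points worth flagging. First, your modulation ODE $|\dot g-\dot g^{(N)}|\lesssim\|\varepsilon\|_{L^2_{\mathrm{loc}}}+\|\Psi^{(N)}\|+\|\varepsilon\|^2_{H^1}$ is too weak: the paper's Proposition \ref{prop modulation estimate} gains a factor $a^{-2}$ in front of the $\|\varepsilon\|_{H^1}$ term, coming from testing the $\varepsilon$-equation against the (correct) orthogonality directions and using that $L_j-L_\pm=O(a^{-2})$. Without that gain, integrating backward from $T_n$ gives only $|g(t)-g^{(N)}(t)|\lesssim t^{1-N/9}$, which loses a full power of $t$ through Lemma \ref{lem Q mod bound} and the bootstrap does not close (note the paper has to carry \emph{separate} bootstrap rates for $\varepsilon$, $x\varepsilon$ and each modulation parameter precisely to make this bookkeeping work). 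Second, your $\mathcal F$ differs from the linearized energy by a sign on the potential terms (the quadratic part is $+\int\phi_{|R|^2}|\varepsilon|^2+2\int\phi_{\re(\varepsilon\bar R)}\re(\varepsilon\bar R)$) and omits the localized $\beta_j$- and $\mu_j$-weighted mass/momentum/variance corrections ($\mathcal G_2$ through $\mathcal G_5$ in Section \ref{sec:mod}) that are needed for the unscaled quadratic form per bubble to match $(L_+\cdot,\cdot)+(L_-\cdot,\cdot)$.
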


We show that the above implies the theorems.

\begin{proof} [Proof of Theorem \ref{thm hyperbolic} and \ref{thm parabolic} by Proposition \ref{prop uniform estimate}] \ \par
Let us fix $N \gg1 $ large such that the conclusion holds. By the standard bootstrap argument, we know \eqref{eq uniform estimate, bootstrap} actually holds for any $T_* \in [T_0,T_n] \cap (t_n, T_n]$. Further, by the blow-up criterion, if $t_n>- \infty$, then $\| u_n(t) \|_{H^1} \to +\infty$ as $t \to t_n$. Thus we must have $t_n=- \infty$ or $t_n< T_0$, where the former can be understood as a special case of the latter. Moreover, we know \eqref{eq uniform estimate, bootstrap} holds for $T_*= T_0$. 

We deduce that there exists $C>0$ such that
\begin{equation} \label{eq uniform estimate of un, boundedness}
    \| u_n(t) \|_{H^1} \le C, \quad \forall n \ge 1,\ \forall t \in [T_0,T_n].
\end{equation}
Also, for any $\delta>0$, there exist $r=r(\delta)>0$ and $t_0=t_0(\delta)>T_0$ such that 
\begin{equation}
    \int_{|x|>r} |u_n(t_0,x)|^2 \d x< \delta, \quad \forall n \ge 1.
\end{equation}
We claim that there exists $r'=r'(\delta)>0$ such that
\begin{equation} \label{eq uniform estimate of un, decay}
    \int_{|x|>r'} |u_n(T_0,x)|^2 \d x< 2\delta, \quad \forall n \ge 1.
\end{equation}

In order to prove \eqref{eq uniform estimate of un, decay}, let $\Phi \in C^\infty(\mathbb{R})$ be a cutoff function satisfying
\begin{equation}
    0 \le \Phi \le 1, \quad 0 \le \Phi' \le 2, \quad \Phi(x)= \left\{ \begin{aligned}
        0, &&x \le 0, \\
        1, &&x \ge 1.
    \end{aligned} \right. 
\end{equation}
For  $L>0$ we now define $z(t)= \int |u_n(t,x)|^2 \Phi( \frac{|x|-r}{L}) \d x$. Then $z(t_0) \le \delta$. Since
\begin{equation}
    z'(t)= -2 \mathrm{Im} \int \Delta u_n \overline{u_n} \Phi \Big( \frac{|x|-r}{L} \Big) \d x= \frac{2}{L} \mathrm{Im} \int \nabla u_n \overline{u_n} \cdot \frac{x}{|x|} \Phi' \Big( \frac{|x|-r}{L} \Big) \d x,
\end{equation}
we have $|z'(t)| \le \frac{4}{L} \| u_n \|_{H^1}^2$. Integrating in $t$ and using \eqref{eq uniform estimate of un, boundedness}, we get 
\begin{equation}
    \int_{|x|>L+r} |u_n(T_0,x)|^2 \d x \le z(T_0) \le \frac{4C^2(t_0-T_0)}{L}+ \delta.
\end{equation}
We deduce \eqref{eq uniform estimate of un, decay} by taking $L=L(\delta)$ large enough and $r'=L+r$.

Now \eqref{eq uniform estimate of un, boundedness} and \eqref{eq uniform estimate of un, decay} imply the existence of a subsequence $u_{n_k}(T_0)$ of $u_n(T_0)$ that converges in $L^2$ to some $U_0$ and $U_0 \in H^1$. Let $U$ be the solution to
\begin{equation} 
    \left\{ \begin{aligned}
        &i\partial_t U+ \Delta U- \phi_{|U|^2} U= 0, \\
        &U(T_0)=U_0
    \end{aligned} \right.
\end{equation}
with forwards maximal interval $[T_0,T)$. By local well-posedness, we have $u_{n_k}(t) \to U(t)$ in $L^2$ for any $t \in [T_0,T)$. Thanks to \eqref{eq uniform estimate of un, boundedness}, by passing to a subsequence, we may assume $u_{n_k}(t) \rightharpoonup U(t)$ in $H^1$. Using \eqref{eq uniform estimate, bootstrap} and Fatou's lemma, we deduce
\begin{equation}
    \left\| U(t)- R_{g^{(N)}}^{(N)}(t) \right\|_{H^1} \le 2t^{-\frac{N}{9}}, \quad \forall t \in [T_0,T).
\end{equation}
Then by the blow-up criterion, we see $T=+\infty$.

Finally, take $\mu^\infty= \delta^\infty=0$ and let $\gamma^\infty(t)$ be such that
\begin{equation}    
    \gamma_j^\infty(0)=0, \quad \dot{\gamma}_j^\infty= (\lambda_j^\infty)^2- |\beta_j^\infty|^2- \big( \dot{\beta}_j^\infty+ 4\mu_j^\infty \beta_j^\infty \big) \cdot \alpha_j^\infty- \big( \dot{\mu}_j^\infty+ 4(\mu_j^\infty)^2 \big) |\alpha_j^\infty|^2.
\end{equation}
Using the fundamental theorem of calculus, we have 
\begin{equation} \label{taylor-boot}
    \left\| R_{g^{(N)}}^{(N)}(t)- R_{g^{\infty}}^{(N)}(t) \right\|_{H^1} \lesssim |g^{(N)}(t)- g^\infty(t)|+ a |\beta^{(N)}(t)- \beta^\infty(t)|+ a^2 |\mu^{(N)}(t)|.
\end{equation}
Then we obtain the conclusion of the theorem by Remark \ref{rmk P^N- P^infty}.
\end{proof}

In order to control the error in the bootstrapping step,  we still need to impose orthogonality between the modulation trajectories and the zero modes of the linearized operators. However, compared to subcritical cases, the linearized operators in the mass-critical equation \eqref{H} have more elements in the generalized root space due to the pseudo-conformal symmetry.\\[5pt]
Consequently we use the related additional parameters $\mu_j$ and $\delta_j$ to obtain more orthogonality conditions, which is an essential difference compared to \cite{KRM,Wu}. In particular, we have the following Lemma.

\begin{Lemma} \label{lem orthogonality}
Let $N,n \ge 1$. Then there exist $T_0=T_0(N)>0$ and a unique modulation parameter $g \in C^1([T_0,+\infty), \Omega \times (\R/2\pi\Z)^m)$ such that: if 
\begin{equation}
    \varepsilon(t,x)= u_n(t,x)- R_g^{(N)}(t,x),
\end{equation}
then for $t \ge T_0$ and $1\le j \le m$, we have
\begin{equation} \label{eq orthogonality} \begin{aligned}
    &\ \re \Big( \varepsilon(t), g_j Q \Big)= \re \Big( \varepsilon(t), g_j(x Q) \Big)= \re \Big( \varepsilon(t), g_j \big( |x|^2 Q \big) \Big) \\
    &= \imm \Big( \varepsilon(t), g_j \big( \Lambda Q \big) \Big)= \imm \Big( \varepsilon(t), g_j \big( \nabla Q \big) \Big)= \imm \Big( \varepsilon(t), g_j \rho \Big)= 0.
\end{aligned} \end{equation} 
In particular, we have
\begin{equation} \label{eq value of epsilon at T_n}
    g(T_n)=g^{(N)}(T_n), \quad \varepsilon(T_n)=0.
\end{equation}
\end{Lemma}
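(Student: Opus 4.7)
The plan is the standard modulation-theory application of the implicit function theorem, leveraging the structural identities in Lemma \ref{lem straight} and the approximate-bubble construction of Section \ref{sec:approx}. For each admissible $t$ define
\[
\Phi_t : \mathcal{V} \subset \Omega \times (\R/2\pi\Z)^m \longrightarrow \R^{12m},
\]
whose $12m$ scalar components are precisely the inner products appearing in \eqref{eq orthogonality} evaluated at $\varepsilon(t) = u_n(t) - R_g^{(N)}(t)$; the count $12 = 1 + 4 + 1 + 1 + 4 + 1$ per bubble matches the number of parameters in each $g_j = (\alpha_j, \beta_j, \lambda_j, \mu_j, \delta_j, \gamma_j)$. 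At $(t,g) = (T_n, g^{(N)}(T_n))$ the initial condition \eqref{eq equation of un} forces $\varepsilon \equiv 0$, so $\Phi_{T_n}(g^{(N)}(T_n)) = 0$; the relations \eqref{eq value of epsilon at T_n} will then be consequences of the uniqueness part of the IFT.

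The core step is uniform invertibility of $\partial_g \Phi_t$ in a neighborhood of $g^{(N)}(t)$. Differentiating the action \eqref{eq action} at leading order gives
\[
\partial_{\gamma_j}(g_j Q) = i g_j Q, \quad \partial_{\alpha_j}(g_j Q) = -\lambda_j\, g_j \nabla Q, \quad \partial_{\beta_j}(g_j Q) = i x\, g_j Q,
\]
\[
\partial_{\lambda_j}(g_j Q) = \lambda_j^{-1} g_j \Lambda Q, \quad \partial_{\mu_j}(g_j Q) = i|x|^2 g_j Q, \quad \partial_{\delta_j}(g_j V_j^{(N)}) = g_j \rho + r_j,
\]
where the remainder $r_j$ is $O(a^{-2})$ in $H^1$ by admissibility of $W_j^{(N)}$ (Lemma \ref{lem properties of admissible functions-2}) and $|\delta_j| \lesssim a^{-3}$ from Remark \ref{rmk P^N- P^infty}. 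Since $Q, \Lambda Q, |x|^2 Q, \rho$ are even while $xQ, \nabla Q$ are odd, the diagonal $12\times 12$ block attached to bubble $j$ splits along parity into a real-to-real $6\times 6$ sub-block (parameters $\delta_j, \alpha_j, \lambda_j$ against directions $\re(\cdot, g_j Q)$, $\re(\cdot, g_j xQ)$, $\re(\cdot, g_j |x|^2 Q)$) and an imaginary-to-imaginary $6\times 6$ sub-block (parameters $\beta_j, \mu_j, \gamma_j$ against $\imm(\cdot, g_j \Lambda Q)$, $\imm(\cdot, g_j \nabla Q)$, $\imm(\cdot, g_j \rho)$). Using $(Q, \Lambda Q) = 0$, $(xQ, \nabla Q) = -2\|Q\|^2$, $(\Lambda Q, |x|^2 Q) = -\|xQ\|^2$, $(Q, \rho) = -\tfrac12 \|xQ\|^2$ from Lemma \ref{lem straight} together with parity cancellations, both sub-blocks become (block-)triangular with explicit nonzero diagonal entries, hence uniformly invertible.

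Off-diagonal blocks between distinct bubbles $j \neq k$ consist of integrals of products of translated $Q$'s and their derivatives and are exponentially small in the minimal separation $a(t)$, which tends to $+\infty$ in both regimes (Remark \ref{rmk P^N- P^infty}). Combining, $\partial_g \Phi_t$ is an $o(1)$ perturbation of a uniformly invertible block-diagonal matrix on $\mathcal{V}$ as long as $T_0 = T_0(N)$ is taken large. The implicit function theorem therefore yields, in a backwards neighborhood of $T_n$, a unique $C^1$ curve $g(t)$ of zeros of $\Phi_t$, and by uniqueness $g(T_n) = g^{(N)}(T_n)$.

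Extension to the full interval $[T_0, T_n]$ is routine: the bootstrap bound \eqref{eq uniform estimate, bootstrap} controls $\|\varepsilon(t)\|_{H^1}$, and Lemma \ref{lem Q mod bound} converts this into uniform control of $|g(t) - g^{(N)}(t)|$, keeping $g(t)$ inside the IFT neighborhood $\mathcal{V}$. A standard open-closed argument on the set of $t \in [T_0, T_n]$ where the branch exists then propagates the local solution all the way back to $T_0$. The main technical obstacle is precisely the uniform non-degeneracy of $\partial_g \Phi_t$ as $t \to \infty$; once this is secured via the block decomposition above and the divergence of $a(t)$, the remainder of the lemma (uniqueness, $C^1$ regularity, and \eqref{eq value of epsilon at T_n}) follows directly from the IFT framework.
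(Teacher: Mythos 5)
Your proof is correct and follows the standard modulation-theory route: an implicit-function-theorem argument at $t=T_n$ (where $\varepsilon\equiv 0$ forces $g(T_n)=g^{(N)}(T_n)$), with uniform invertibility of the Jacobian obtained from the parity-split $6\times 6$ blocks and the nonvanishing pairings of Lemma \ref{lem straight}, exponential smallness of inter-bubble cross terms as $a(t)\to\infty$, and a continuation argument controlled by the bootstrap bound \eqref{eq uniform estimate, bootstrap} via Lemma \ref{lem Q mod bound}. The paper defers to \cite[Lemma~3.1]{Wu} for exactly this argument, so your write-up supplies the details the paper leaves to the reference, and the block/parity structure and the identified role of the bootstrap assumption match the intended proof.
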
 

\begin{proof}
The proof is similar to the proof of \cite[Lemma~3.1]{Wu} and we refer to \cite{Wu} for details. We also note that the proof relies on the assumption \eqref{eq uniform estimate, bootstrap}.
\end{proof}
\;\\
We now further reduce Proposition \ref{prop uniform estimate} to the modulation estimate in the following bootstrap argument.

\begin{Prop}[Bootstrap argument] \label{prop bootstrap}
For $N$ and $T_0=T_0(N)$ large enough, $\forall n\ge 1,\ T_* \in [T_0,T_n] \cap (t_n, T_n]$, if
\begin{equation} \label{eq bootstrap assumption}
   \left\{ \begin{aligned}
       &\| \varepsilon(t) \|_{H^1} \le t^{-\frac{N}{4}}, \quad \| x \varepsilon(t) \|_{L^2} \le t^{-\frac{N}{4}+2}, \\
       &\sum_{j=1}^m \left| \lambda_j(t)- \lambda_j^{(N)}(t) \right|+ \left| \beta_j(t)- \beta_j^{(N)}(t) \right| \le t^{-1-\frac{N}{8}},\\
       &\sum_{j=1}^m \left| \gamma_j(t)- \gamma_j^{(N)}(t) \right|+ \left| \alpha_j(t)- \alpha_j^{(N)}(t) \right| \le t^{-\frac{N}{8}},\\
       &\sum_{j=1}^m \left| \mu_j(t)- \mu_j^{(N)}(t) \right| \le t^{-\frac{5}{2} - \frac{N}{8}},\;\;\;\sum_{j=1}^m \left| \delta_j(t)- \delta_j^{(N)}(t) \right| \le t^{-\frac{7}{2} - \frac{N}{8}},
   \end{aligned} \right. 
\end{equation}
for any $t \in [T_*,T_n]$, then
\begin{equation} \label{eq bootstrap conclusion}
   \left\{ \begin{aligned}
       &\| \varepsilon(t) \|_{H^1} \le \frac{1}{2} t^{-\frac{N}{4}}, \quad \| x \varepsilon(t) \|_{L^2} \le \frac{1}{2} t^{-\frac{N}{4}+2}, \\
       &\sum_{j=1}^m \left| \lambda_j(t)- \lambda_j^{(N)}(t) \right|+ \left| \beta_j(t)- \beta_j^{(N)}(t) \right| \le \frac{1}{2} t^{-1-\frac{N}{8}}, \\
       &\sum_{j=1}^m \left| \gamma_j(t)- \gamma_j^{(N)}(t) \right|+ \left| \alpha_j(t)- \alpha_j^{(N)}(t) \right| \le \frac{1}{2} t^{-\frac{N}{8}},\\
       & \sum_{j=1}^m \left| \mu_j(t)- \mu_j^{(N)}(t) \right| \le \frac{1}{2} t^{-\frac{5}{2} - \frac{N}{8}},\;\;\; \sum_{j=1}^m \left| \delta_j(t)- \delta_j^{(N)}(t) \right| \le \frac{1}{2}t^{-\frac{7}{2} - \frac{N}{8}}    
   \end{aligned} \right. 
\end{equation}
for any $t \in [T_*,T_n]$.
\end{Prop}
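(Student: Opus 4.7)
The plan is to follow the scheme of \cite{KRM,Wu}: derive the $\varepsilon$-equation, and then combine a modulated-energy bound for $\|\varepsilon\|_{H^1}$, a virial-type bound for $\|x\varepsilon\|_{L^2}$, and modulation ODEs for $g-g^{(N)}$ derived from the orthogonalities \eqref{eq orthogonality}. All three estimates are integrated backwards from $t=T_n$, using $\varepsilon(T_n)=0$ and $g(T_n)=g^{(N)}(T_n)$ from \eqref{eq value of epsilon at T_n}. Writing $u_n=R_g^{(N)}+\varepsilon$ and using \eqref{eq definition of Psi}, one obtains
\begin{equation*}
i\partial_t\varepsilon + \Delta\varepsilon - \phi_{|R_g^{(N)}|^2}\varepsilon - 2\phi_{\re(R_g^{(N)}\overline{\varepsilon})}\,R_g^{(N)} \;=\; -\Psi^{(N)} - \sum_{j=1}^m S_j^{(N)}\,e^{i(\gamma_j+\beta_j\cdot x+\mu_j|x|^2)} + \mathcal{N}(\varepsilon),
\end{equation*}
with $\mathcal{N}(\varepsilon)$ at least quadratic in $\varepsilon$. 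By Proposition~\ref{prop accuracy of approximate solution} and \eqref{hyp N est}--\eqref{parab N est}, the source satisfies $\|\Psi^{(N)}\|_{H^1}\lesssim a(t)^{-(N+2)}$, which is negligible for $N$ large, while the $S_j^{(N)}$ terms will be absorbed into the modulation ODEs.

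For the $H^1$-estimate I would introduce a modulated Hamiltonian $\mathcal{F}(\varepsilon,t)$ modeled on the conserved energy of \eqref{H}, written with $R_g^{(N)}$ as pivot so that $\mathcal{F}$ vanishes to second order in $\varepsilon$ at $\varepsilon=0$. Pulling back to each soliton frame via $g_j^{-1}$, applying Lemma~\ref{lem coercivity} summed over $j$, and invoking all six orthogonalities \eqref{eq orthogonality}, one checks $\mathcal{F}(\varepsilon,t)\gtrsim \|\varepsilon\|_{H^1}^2$ once the $O(a^{-c})$ inter-bubble interactions are absorbed. Differentiating $\mathcal{F}$ in $t$ and exploiting that $u_n$ solves \eqref{H} produces
\begin{equation*}
\Bigl|\tfrac{d}{dt}\mathcal{F}\Bigr| \lesssim \|\Psi^{(N)}\|_{H^1}\|\varepsilon\|_{H^1} + |\dot g - \dot g^{(N)}|\,\|\varepsilon\|_{H^1}^2 + \|\varepsilon\|_{H^1}^3,
\end{equation*}
and backward integration from $\mathcal{F}(T_n)=0$ gives $\|\varepsilon(t)\|_{H^1}\lesssim \int_t^{T_n}\|\Psi^{(N)}\|_{H^1}\,\d\tau$, which beats $\tfrac12 t^{-N/4}$ for $N$ large. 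The weighted bound $\|x\varepsilon\|_{L^2}$ follows by computing $\tfrac{d}{dt}\int|x|^2|\varepsilon|^2\,\d x$ from the Schr\"odinger structure, using $\|x\Psi^{(N)}\|_{L^2}\lesssim a^{-(N+1)}$ (since $\Psi^{(N)}$ is concentrated near the centers $\alpha_j$) together with the already-improved $H^1$-bound.

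For the modulation parameters I would differentiate each of the six identities in \eqref{eq orthogonality} in $t$, substitute the $\varepsilon$-equation, and use the pairings in Lemma~\ref{lem straight}(iii). This yields a $6m\times 6m$ linear system $\mathcal{M}(t)\bigl(\dot g - \dot g^{(N)}\bigr) = \mathcal{R}(t)$ whose principal matrix $\mathcal{M}(t)$ is block-diagonal in $j$ up to $O(a^{-c})$ corrections and uniformly invertible by Lemma~\ref{lem straight}(iii), while the right-hand side $\mathcal{R}(t)$ is controlled by $\|\varepsilon\|_{H^1}$, $\|\Psi^{(N)}\|_{L^2}$ and the still-admissible modulation mismatches. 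Integrating backwards from $g(T_n)=g^{(N)}(T_n)$ and inserting the improved $H^1$-bound delivers \eqref{eq bootstrap conclusion} at each of the six levels; the sharper powers attached to $\mu_j$ and $\delta_j$ come from the higher homogeneity degrees of the associated $\mathcal{R}$-components, together with the identities $L_+\rho=-|x|^2 Q$ and $L_-(|x|^2Q)=-4\Lambda Q$ from Lemma~\ref{lem straight}(ii).

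The main obstacle lies in two places. The first is the uniform-in-time coercivity of $\mathcal{F}$ in the presence of the long-range $|x|^{-2}$ Hartree kernel, where the inter-bubble cross terms must be dominated by the decay $a(t)^{-1}$; this is where the upper bounds \eqref{eq boundedness of g} on the parameters and the quantitative decay of $Q$ are essential. The second, more subtle, is the uniform invertibility of $\mathcal{M}(t)$ on the pseudo-conformal block spanned by $|x|^2 Q$ and $\rho$: these are precisely the degenerate generalized-root-space elements highlighted in the abstract, and it is the simultaneous use of orthogonality against both $|x|^2 Q$ and $\rho$ that decouples the $\mu_j$- and $\delta_j$-equations and produces the distinct backward rates $t^{-5/2-N/8}$ and $t^{-7/2-N/8}$ in \eqref{eq bootstrap conclusion}.
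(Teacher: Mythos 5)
Your overall architecture is the right one --- modulated energy for $\|\varepsilon\|_{H^1}$, virial monotonicity for $\|x\varepsilon\|_{L^2}$, modulation ODEs from the orthogonalities, all integrated backward from $\varepsilon(T_n)=0$, $g(T_n)=g^{(N)}(T_n)$ --- and the paper follows exactly this scheme. But there is a genuine gap in the energy step that would prevent the bootstrap from closing.

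The functional $\mathcal{F}$ as you describe it --- the conserved energy linearized at $R_g^{(N)}$ so that it vanishes to second order in $\varepsilon$ --- is only the $\G_1$ piece of the paper's functional $\G=\sum_{k=1}^5\G_k$. On its own, $\G_1$ is \emph{not} coercive in $H^1$ (it contains $\int\phi_{|R|^2}|\varepsilon|^2<0$ and no positive multiple of $\int|\varepsilon|^2$), and its time derivative contains terms of size $O(\|\varepsilon\|_{H^1}^2)$ with \emph{no} extra decay in $t$: see \eqref{eq estimate of G_1}, where $4(\lambda_j^2+|\beta_j|^2)\int\phi_{\re(\varepsilon\bar R)}\,\mathrm{Im}(\varepsilon\overline{R_j})$ and the two $\beta_j$-terms are all $O(\|\varepsilon\|_{H^1}^2)$ because $\lambda_j,\beta_j=O(1)$ and $\partial_t R\sim\dot\gamma_j^{(N)} R_j+\dot\alpha_j^{(N)}\cdot\nabla R_j$ is $O(1)$. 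Integrating such a term against the bootstrap input $\tau^{-N/2}$ gives $t^{-N/2+1}$, which is \emph{worse} than the target $\tfrac14 t^{-N/2}$ by a full factor of $t$. Your claimed bound $|\dot{\mathcal{F}}|\lesssim \|\Psi\|_{H^1}\|\varepsilon\|_{H^1}+|\dot g-\dot g^{(N)}|\|\varepsilon\|^2_{H^1}+\|\varepsilon\|^3_{H^1}$ misses precisely these dominant $O(1)\cdot\|\varepsilon\|^2$ terms, which are not of the form $|\dot g-\dot g^{(N)}|\|\varepsilon\|^2$ since they come from $\dot g^{(N)}$ itself. The paper's cure is to add the localized mass, momentum, and virial functionals $\G_2,\dots,\G_5$ built with the partition of unity $\varphi_j$ of Lemma~\ref{lem cutoff}: $\G_2$ supplies the positive $\lambda_j^2\int\varphi_j|\varepsilon|^2$ that yields $H^1$ coercivity after pulling back to $(L_+\re\tilde\varepsilon_j,\re\tilde\varepsilon_j)+(L_-\im\tilde\varepsilon_j,\im\tilde\varepsilon_j)$, and the derivatives $\frac{d\G_2}{dt},\frac{d\G_3}{dt}$ exactly cancel the dangerous $O(\|\varepsilon\|^2)$ terms in $\frac{d\G_1}{dt}$, as is visible in \eqref{eq estimate of G_2}--\eqref{eq estimate of G_3}. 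Only after this cancellation is the residual $O(\|\varepsilon\|_{H^1}^2/t)$, which integrates to $\frac{C}{N}t^{-N/2}$ and closes for $N$ large. Without $\G_2,\dots,\G_5$ the argument does not close.

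Two further, smaller points. First, the modulation step in the paper is two-tiered: one first bounds $Mod(t)$ --- the deviation of $g$ from satisfying the modulation system \eqref{eq parameters} --- via the orthogonalities and the pairing identities of Lemma~\ref{lem straight}(iii), obtaining \eqref{eq modulation estimate}; then one converts $Mod(t)$ into bounds on $\dot g-\dot g^{(N)}$ by using the admissibility degrees of $M_j^{(N)},B_j^{(N)},D_j^{(N)}$ (e.g.\ that $\delta_j$ first appears in $D_j^{(N)}$ at degree $\ge 5$ in the hyperbolic case) to control $|M_j^{(N)}(P)-M_j^{(N)}(P^{(N)})|$, etc. Your direct inversion of a $6m\times 6m$ system $\mathcal{M}(\dot g-\dot g^{(N)})=\mathcal{R}$ is morally equivalent but glosses over how the differentiated weights $t^{-5/2-N/8}$, $t^{-7/2-N/8}$ for $\mu_j,\delta_j$ actually arise from those degrees. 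Second, in the parabolic case the cutoff-error terms \eqref{eq G_2 cutoff error}--\eqref{eq G_3 cutoff error} only close because all $\lambda_j$ are taken equal (as in Theorem~\ref{thm parabolic}), a point you do not address.
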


\begin{proof} [Proof of Proposition \ref{prop uniform estimate} by Proposition \ref{prop bootstrap}] \ \par
Since the left-hand sides are continuous in $t$, by a bootstrap argument, we know \eqref{eq bootstrap assumption} actually holds for any $t \in [T_0,T_n] \cap (t_n, T_n]$. Then we have
\begin{equation} \begin{aligned}
    \left\| u_n(t)- R_{g^{(N)}}^{(N)} \right\|_{H^1} &\le \left\| u_n(t)- R_g^{(N)} \right\|_{H^1}+ \left\| R_g^{(N)}- R_{g^{(N)}}^{(N)} \right\|_{H^1} \\
    &\le \| \varepsilon(t) \|_{H^1}+ C_N a^2 |g- g^{(N)}| \le t^{-\frac{N}{4}}+ C_N t^{-\frac{N}{8}+2} \le t^{-\frac{N}{9}}
\end{aligned} \end{equation}
when $T_0(N)$ is large enough. Here we also used the estimate \eqref{taylor-boot}. This finishes the proof of Proposition \ref{prop uniform estimate}.
\end{proof}
Let us give a short explanation on the bootstrap assumptions. We note that, in the proof of Proposition \ref{prop bootstrap}, it is fundamental to impose the orthogonality conditions from Lemma \ref{lem orthogonality} and hence the coercivity result in Proposition \ref{prop coercivity} below in Section \ref{sec:mod-subsec2}. To obtain the orthogonality conditions, we used assumption \eqref{eq uniform estimate, bootstrap}.\\[4pt]
Likewise \eqref{eq bootstrap assumption} will be essential to obtain both, the parameter estimates in \eqref{eq bootstrap conclusion}, proved in Section \ref{sec:mod-subsection}, and the error estimate in Proposition \ref{prop estimate on G(epsilon)} below in Section \ref{sec:mod-subsec2}. In combination with the coercivity, this will give us the final control in the first line of \eqref{eq bootstrap conclusion}.
\section{Modulation and error control}\label{sec:mod}
In this section, we give a proof of Proposition \ref{prop bootstrap} in the previous Section \ref{sec:bootstrap}. 
\subsection{Modulation} \label{sec:mod-subsection}
Let us start and consider the  \emph{modulation parameters $P$ in Proposition \ref{prop bootstrap}},  for which we have the orthogonality conditions in Lemma \ref{lem orthogonality} available.\;\\[4pt]
First we note combining \eqref{hyp N est} and \eqref{parab N est} with the bootstrap assumption \eqref{eq bootstrap assumption} in Proposition \ref{prop bootstrap}, we obtain on $ [T_*, T_n]$
\begin{align}\label{param-est-hyp}
&|\alpha(t) | \lesssim t,\; |\beta(t)| \lesssim 1,\; |\lambda(t)| \sim 1,\; |\mu(t)| \lesssim t^{-2},\;|\delta(t)| \lesssim t^{-3}\\ \label{param-est-par}
& |\alpha(t) | \lesssim  t^\f12,\; |\beta(t)| \lesssim t^{-\f12},\; |\lambda(t)| \sim 1,\; |\mu(t)| \lesssim t^{-2},\; |\delta(t)| \lesssim t^{-3},
\end{align}
for the modulation parameters of Lemma \ref{lem orthogonality}. Here, as usual, \eqref{param-est-hyp} is the hyperbolic and \eqref{param-est-par} the parabolic case.\\[5pt]
\emph{\textbf{Step 1.}} \emph{Modulation estimate.}\;  We write $u = u_n$ for simplicity and let the error for the modulation equations \eqref{eq parameters} be defined as
\begin{equation} \label{Mod-error}\begin{aligned} 
    Mod(t) := &\ \big| \dot{\alpha}_j- 2\beta_j- 4\mu_j \alpha_j \big|+ \big| \dot{\beta}_j+ 4\mu_j \beta_j+ (\dot{\mu}_j+ 4\mu_j^2) \alpha_j- B_j^{(N)} \big| \\
    &+ \big| \dot{\lambda}_j+ 4\lambda_j \mu_j- M_j^{(N)} \big| + \big| \dot{\mu}_j+ 4\mu_j^2- \lambda_j^4 \delta_j \big|+ \big| \dot{\delta}_j- D_j^{(N)} \big| \\
    &+ \big| \dot{\gamma}_j+ (\dot{\beta}_j+ 4\mu_j \beta_j) \cdot \alpha_j+ (\dot{\mu}_j+ 4\mu_j^2) |\alpha_j|^2+ |\beta_j|^2- \lambda_j^2 \big|,
\end{aligned} \end{equation}
where now $M_j^{(N)} = M_j^{(N)}(P),\;B_j^{(N)} = B_j^{(N)}(P),\; D_j^{(N)} = D_j^{(N)}(P)  $. Expanding the Hartree equation \eqref{H} by $u = \varepsilon + R$ with $R =R_g^{(N)}$ in \eqref{eq approximate solution}, we obtain $\varepsilon$ satisfies
\begin{align} \label{eq equation of epsilon}
    i\partial_t \varepsilon+ \Delta \varepsilon- &\phi_{|R|^2} \varepsilon- 2\phi_{\re (\varepsilon \overline{R}) } R\\  \nonumber
    &= \Psi+ \cN(\varepsilon)+ \sum_{j=1}^m S_j(y_j) e^{i(\gamma_j+ \beta_j \cdot x+ \mu_j |x|^2)},
\end{align}
where $\Psi= \Psi^{(N)}$ is as in \eqref{eq definition of Psi} and 
\begin{align}\label{eq nonlin epsilon}
    \cN(\varepsilon)= 2\phi_{\mathrm{Re} (\varepsilon \overline{R})} \varepsilon+ \phi_{|\varepsilon|^2}R+ \phi_{|\varepsilon|^2} \varepsilon.
\end{align}
\begin{Prop} \label{prop modulation estimate}
We have
\begin{equation} \label{eq modulation estimate}
Mod(t) \le \frac{C\| \varepsilon \|_{H^1}}{a^2}+ \frac{C_N}{a^{N+2}}+ C_N \| \varepsilon \|_{H^1}^2\;\;\;\forall \; t \in [T_*, T_n]
\end{equation}
\end{Prop}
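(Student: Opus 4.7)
My approach to proving Proposition~\ref{prop modulation estimate} would be to differentiate the twelve (per index $j$) scalar orthogonality conditions of Lemma~\ref{lem orthogonality} in time, substitute $\partial_t\varepsilon$ from \eqref{eq equation of epsilon}, and thereby recover a linear system on $Mod(t)$ whose right-hand side is controlled by the three quantities appearing in \eqref{eq modulation estimate}. The key structural fact is that $S_j^{(N)}$ defined in \eqref{eq definition of S_j^N} is, modulo $W_j^{(N)}$-corrections, a linear combination of the $Mod$-components with the generalized root-space elements $\nabla Q$, $\Lambda Q$, $\rho$, $|y_j|^2 Q$, $y_j Q$, $Q$ as coefficients; these are paired (up to $\lambda_j$-rescaling) with the six test profiles $Q$, $xQ$, $|x|^2 Q$, $\Lambda Q$, $\nabla Q$, $\rho$ used in Lemma~\ref{lem orthogonality}, and the pairing matrix is invertible by the identities of Lemma~\ref{lem straight}(iii).

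Concretely, for each choice of $f_j \in \{Q, xQ, |x|^2 Q, \Lambda Q, \nabla Q, \rho\}$ I would compute
\[
0 = \tfrac{d}{dt}\,\mathrm{Re}\,(\varepsilon, g_j f_j) = \mathrm{Re}\,(\partial_t\varepsilon, g_j f_j) + \mathrm{Re}\,(\varepsilon, \partial_t(g_j f_j)),
\]
and similarly with $\mathrm{Im}$ in place of $\mathrm{Re}$ where appropriate. The $i S_j^{(N)} e^{i(\gamma_j + \beta_j\cdot x + \mu_j|x|^2)}$ contribution from \eqref{eq equation of epsilon}, evaluated on $V_j^{(N)} = Q + \delta_j \rho + W_j^{(N)}$, yields a linear combination of the $Mod_j$-components with coefficients that are explicit non-vanishing inner products (for instance $(xQ, \nabla Q) = -2\|Q\|_{L^2}^2$ and $(\rho, Q) = -\tfrac12\|xQ\|_{L^2}^2$ from Lemma~\ref{lem straight}(iii)), plus an $O(a^{-2})$ perturbation coming from the $W_j^{(N)}$-corrections (admissible of degree $\geq 2$) and from $k \neq j$ terms, which are exponentially small against $g_j f_j$. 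The resulting $12m\times 12m$ coefficient matrix is therefore, for $T_0$ large, a uniform-in-$t$ perturbation of an invertible block-triangular matrix.

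The remaining contributions form the right-hand side of \eqref{eq modulation estimate}. The source $\Psi$ paired against the exponentially localized $g_j f_j$ is $O(a^{-N-2})$ by Proposition~\ref{prop accuracy of approximate solution}. The cubic $\cN(\varepsilon)$ in \eqref{eq nonlin epsilon} contributes $C_N \|\varepsilon\|_{H^1}^2$ by Hardy--Littlewood--Sobolev combined with Sobolev embedding and the uniform $L^\infty$-bound on $R$. The quadratic-in-$R$ terms $\phi_{|R|^2}\varepsilon$ and $2\phi_{\mathrm{Re}(\varepsilon\overline R)}R$ paired with $g_j f_j$ split as a diagonal-in-$j$ piece (absorbed into the coefficient matrix) plus genuinely inter-bubble cross terms bounded by $\|\varepsilon\|_{H^1}/a^2$. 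Finally, $\mathrm{Re}\,(\varepsilon, \partial_t(g_j f_j))$ involves $\dot P_j$ times bounded profiles; writing $\dot P_j = Mod_j + (B_j^{(N)}, M_j^{(N)}, D_j^{(N)}, \ldots)$ and invoking \eqref{hyp N est}--\eqref{parab N est} together with the bootstrap assumption \eqref{eq bootstrap assumption}, the first part contributes $\|\varepsilon\|_{H^1}\cdot Mod(t)$ (absorbed into the left side for $T_0$ large), while the second contributes $\|\varepsilon\|_{H^1}/a^2$.

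The main obstacle I anticipate is the bookkeeping needed to confirm that the pairings extract distinct $Mod$-components in the correct block-triangular pattern and that all perturbations (from $W_j^{(N)}$, from inter-bubble coupling, and from the $\delta$-dependence through $\rho$) are genuinely $O(a^{-2})$ with explicit constants, so that invertibility of the $12m\times 12m$ system holds uniformly on $[T_*, T_n]$. Once the system is inverted, collecting the error estimates yields \eqref{eq modulation estimate}. The argument follows the schema of \cite{KRM, Wu} adapted to the four-dimensional nonlocal setting and the extra $\delta$-variable coming from the $\rho$-direction.
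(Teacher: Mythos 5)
Your proposal matches the paper's proof in all essential respects: the paper likewise differentiates the orthogonality relations of Lemma~\ref{lem orthogonality} in time (via $\frac{\d}{\dt}\,\mathrm{Im}\int\varepsilon\overline{\theta_j}$ with $\theta$ ranging over $\{iQ,\,ixQ,\,i|x|^2Q,\,\Lambda Q,\,\nabla Q,\,\rho\}$), substitutes \eqref{eq equation of epsilon}, pairs $S_j^{(N)}$ against the same root-space test profiles to produce the estimate $\sum_{\theta,j}|\mathrm{Re}\int S_j\overline\theta| \ge c\,Mod - C_N a^{-1}Mod$, and controls the remaining contributions from $\Psi$, $\cN(\varepsilon)$, and the linear operator $L_j$ exactly as you describe, absorbing the $O(a^{-1})Mod$ perturbation for $T_0(N)$ large. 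The "invertible $12m\times 12m$ system" you describe is precisely the content of the paper's inequality \eqref{eq estimate on theta: S_j}, so the two arguments coincide.
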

\begin{proof}
For $\Psi$ and $ \cN(\varepsilon)$ in \eqref{eq equation of epsilon} we infer 
\begin{equation} \label{some estimates}
    |\Psi| \lesssim a^{-N-2}, \quad |\cN(\varepsilon)| \lesssim \| \varepsilon \|_{H^1}^2,
\end{equation}
where for the latter we use the  Hardy-Littlewood-Sobolev inequality and Soboelv inequality. We derive \eqref{eq modulation estimate} by exploiting the orthogonality conditions in Lemma \ref{lem orthogonality}. Let $\theta_j= g_j \theta$ be the modulation of a smooth decaying function, i.e. such that
\begin{align} \label{eq decay of theta}
  &\theta_j(y_j) = \lambda_j^2 \theta(\lambda_j(x - \alpha_j)) e^{i(\gamma_j + \beta_j \cdot x + \mu_j|x|^2)},\\[5pt] \nonumber
  &|\nabla^k \theta(x)| \le C_k e^{-c_k|x|}, \quad \forall x \in \mathbb{R}^4,\;\; k \geq 0.
\end{align}
We then compute 
\begin{equation} \begin{aligned}
    \frac{\d}{\d t} \bigg(\mathrm{Im} \int \varepsilon \overline{\theta_j}\;\d x\bigg)= &\ \mathrm{Re} \int \varepsilon \left( \overline{i\partial_t \theta_j+ \Delta \theta_j- 2\phi_{\mathrm{Re} (\theta_j \overline{R})}R- \phi_{|R|^2} \theta_j} \right) \d x \\
    &+ \mathrm{Re} \int (\Psi- \cN(\varepsilon)) \overline{\theta_j} \;\d x+ \sum_{j=1}^m \mathrm{Re} \int S_j(t,x) e^{i\gamma_j+ i\beta_j \cdot x+ i\mu_j |x|^2} \overline{\theta_j} \;\d x.
\end{aligned} \end{equation}

For $\theta_j$ we calculate 
\begin{equation} \begin{aligned}
    i\partial_t \theta_j+ \Delta \theta_j &= \lambda_j^4 \left( \Delta \theta(y_j)- \theta(y_j) \right) e^{i\gamma_j+ i\beta_j \cdot x + i \mu_j |x|^2} \\
    &\quad + O \left( \frac{1}{a^3}+ \frac{C_N}{a^4}+ Mod \right) \left( e^{-c |x-\alpha_j|}+ \frac{C_N}{a} e^{-c_N|x-\alpha_j|} \right),\\[6pt]
    \phi_{\mathrm{Re} (\theta_j \overline{R})} R &= \lambda_j^4  \phi_{\mathrm{Re} (\theta \overline{V_j})} V_j(y_j) e^{i\gamma_j+ i\beta_j \cdot x + i \mu_j |x|^2}+ \phi_{\mathrm{Re} (\theta_j \overline{R}_j)} \sum_{k \neq j} R_k \\
    &\quad + O_N \Big( e^{-c_N a} \max_k e^{-c_N|x-\alpha_k|} \Big),
\end{aligned} \end{equation}
and
\begin{equation} \begin{aligned}
    \phi_{|R|^2} \theta_j = \lambda_j^4  \bigg( \phi_{|V_j|^2}+ \sum_{k \neq j} \psi_{|V_k|^2}^{(1)} \bigg) \theta(y_j)  e^{i\gamma_j+ i\beta_j \cdot x + i \mu_j |x|^2}+ O_N \left( \frac{1}{a^3} e^{-c_N |x-\alpha_j|} \right).
\end{aligned} \end{equation}

Considering \eqref{eq equation of epsilon}, we now collect the terms of degree $1$ in $\theta$ and let
\begin{equation}
    L_j \theta:= -\Delta \theta+ \theta+ 2\phi_{\mathrm{Re} (\theta \overline{V_j})} V_j+ \bigg( \phi_{|V_j|^2}+ \sum_{k \neq j} \psi_{|V_k|^2}^{(1)} \bigg) \theta. 
\end{equation}
If we take $T_0(N)$ large enough, then we have
\begin{equation} \begin{aligned}
    i\partial_t \theta_j+ \Delta \theta_j- &2\phi_{\mathrm{Re} (\theta_j \overline{R})}R- \phi_{|R|^2} \theta_j \\[5pt]
    =&\ -\lambda_j^4 (L_j \theta)(y_j) e^{i\gamma_j+ i\beta_j \cdot x + i \mu_j |x|^2}- 2\phi_{\mathrm{Re} (\theta_j \overline{R}_j)} \sum_{k \neq j} R_k\\
    &\ + O_N \left( \frac{1}{a^3}+ Mod \right) e^{-c_N \min_j |x-\alpha_j|}.
\end{aligned} \end{equation}
Inserting this into the previous formula, we get
\begin{equation} \begin{aligned}
    \frac{\d}{\dt} \bigg(\mathrm{Im} \int \varepsilon \overline{\theta_j}\; \d x \bigg)= &\ -\mathrm{Re} \int \varepsilon \lambda_j^2\overline{ (L_j \theta) e^{i\gamma_j+ i\beta_j \cdot x + i \mu_j |x|^2}}- 2\mathrm{Re} \int \varepsilon \phi_{\mathrm{Re} (\theta_j \overline{R}_j)} \sum_{k \neq j} \overline{R_k}\; \d x \\
    &+ \lambda_j^6 \mathrm{Re} \int S_j \overline{\theta}+ O_N \left( \frac{\| \varepsilon \|_{H^1}}{a^3}+ Mod \| \varepsilon \|_{H^1}+ \frac{1}{a^{N+2}}+ \| \varepsilon \|_{H^1}^2 \right).  
\end{aligned} \end{equation}

If we take $\theta$ to be any element in the set 
$$  \Xi = \{ iQ,\; ix Q,\;i|x|^2 Q,\; \Lambda Q,\;\nabla Q,\;\rho \}$$
we have
\begin{equation}
    L_j \theta=f+ O\left( \frac{1}{a^2}+ \frac{C_N}{a^3} \right) \left( e^{-c |x-\alpha_j|}+ \frac{C_N}{a} e^{-c_N|x-\alpha_j|} \right),
\end{equation}
where $\re \ (\varepsilon, g_j f)=0$ using \eqref{eq orthogonality}. By the proof of Proposition \ref{prop accuracy of approximate solution}, we further have
\begin{equation}
    \phi_{\mathrm{Re} (\theta_j \overline{R_j})}= O \left( \frac{1}{a^2}+ \frac{C_N}{a^3} \right) \big( 1+|x-\alpha_j| \big)^2.
\end{equation}
Finally, we check 
\begin{equation} \label{eq estimate on theta: S_j}
    \sum_{\theta \in \Xi } \sum_{j=1}^m \left| \mathrm{Re} \int S_j(t,x) \overline{\theta}(t,x)\; \d x \right| \ge c\ Mod(t)- \frac{C_N}{a} Mod(t).
\end{equation}
Combining these together, we obtain
\begin{equation}
    Mod(t) \le \frac{C\| \varepsilon \|_{H^1}}{a^2}+ \frac{C_N \| \varepsilon \|_{H^1}}{a^3}+ \frac{C_N Mod(t)}{a}+ \frac{C_N}{a^{N+2}} + C_N\| \varepsilon \|_{H^1}^2. 
\end{equation}
Taking $T_0(N)$ large enough to absorb all $O_N$ terms, we get
\begin{equation}
    Mod(t) \le \frac{C\| \varepsilon \|_{H^1}}{a^2}+ \frac{C_N}{a^{N+2}}+ C_N \| \varepsilon \|_{H^1}^2 
\end{equation} 
for $ t \in [T_*, T_n]$. This concludes the claim.
\end{proof}
\;\\
\emph{\textbf{Step 2.}} \emph{Integration of the modulation estimate.}\;\; We now prove the second, third and fourth line of \eqref{eq bootstrap conclusion} assuming \eqref{eq bootstrap assumption} in Proposition \ref{prop bootstrap}. First we note taking $ N \gg1 $ and $T_0 \gg1$ large we have from Proposition \ref{prop modulation estimate}
\[
Mod(t) \leq C t^{- \frac{N}{4}-1},\;\;\forall\; t \in [T_*, T_n].
\]
Recall that we have \eqref{param-est-hyp} and \eqref{param-est-par} (see also \eqref{hyp N est} and \eqref{parab N est}).

Using the admissibility of $ M_j^{(N)}, B_j^{(N)}$ at least of degree greater than or equal to  $3$ in Section \ref{sec:traje}, we have in the hyperbolic case
\begin{align} 
&\quad \ \left | M_j^{(N)}(P) - M_j^{(N)}(P^{(N)})\right |  + \left | B_j^{(N)}(P) - B_j^{(N)}(P^{(N)})\right |\\
&\leq C\sum_{j=1}^m\Big(\frac{|\alpha_j - \alpha_j^{(N)}| }{a^4} + \frac{|\beta_j - \beta_j^{(N)}|+ |\lambda_j - \lambda_j^{(N)}|}{a^3} + \frac{|\mu_j - \mu_j^{(N)} |}{a} + |\delta_j - \delta_j^{(N)} | \Big),
\end{align}
and in the parabolic case 
\begin{align} 
&\quad \ \left | M_j^{(N)}(P) - M_j^{(N)}(P^{(N)})\right |  + \left | B_j^{(N)}(P) - B_j^{(N)}(P^{(N)})\right |\\
&\leq C\sum_{j=1}^m\Big(\frac{|\alpha_j - \alpha_j^{(N)}| }{a^4} + \frac{|\beta_j - \beta_j^{(N)}|}{a^2} + \frac{|\lambda_j - \lambda_j^{(N)}|}{a^3} + a|\mu_j - \mu_j^{(N)} | + a^3|\delta_j - \delta_j^{(N)} | \Big).
\end{align}
By \eqref{eq bootstrap assumption}, in both cases we have
\begin{equation} \label{first bound mod int}
    \left | M_j^{(N)}(P) - M_j^{(N)}(P^{(N)})\right |  + \left | B_j^{(N)}(P) - B_j^{(N)}(P^{(N)})\right | \leq C t^{-2- \frac{N}{8}}.
\end{equation}
Thus we further conclude from \eqref{eq parameters}, \eqref{first bound mod int} and \eqref{eq bootstrap assumption} 
\begin{align}
&\quad \ |\dot{\lambda}_j - \dot{\lambda}^{(N)}_j | + |\dot{\beta}_j - \dot{\beta}^{(N)}_j |\\
&\leq\;  Mod(t) + \left | M_j^{(N)}(P) - M_j^{(N)}(P^{(N)})\right |  + \left | B_j^{(N)}(P) - B_j^{(N)}(P^{(N)})\right |\\
&\;\;\;\;+ | \lambda_j \mu_j  - \lambda_j^{(N)} \mu_j^{(N)}| +  | \mu_j \beta_j  - \mu_j^{(N)} \beta_j^{(N)}|+ |\lambda_j^4 \delta_j \alpha_j   - (\lambda_j^{(N)})^4 \delta_j^{(N)} \alpha_j^{(N)} |\\
&\leq C t^{- \frac{N}{8} -2}.
\end{align}
Integrating and using \eqref{eq value of epsilon at T_n} we infer 
\[
|\lambda_j(t) - \lambda^{(N)}_j(t) | + |\beta_j(t) - \beta^{(N)}_j(t) | \leq \frac{C}{N}t^{- \frac{N}{8} -1} \leq \frac{1}{2}t^{- \frac{N}{8} -1} 
\]
after taking $N \gg1 $ large.\\[4pt]
Next we note, by the construction in Section \ref{sec:traje}, $\delta_j $ in $D_j^{(N)}$ can only appear in functions of degree greater than or equal to $5$ in the hyperbolic case, hence
\begin{align}
&\quad \ \left | D_j^{(N)}(P) - D_j^{(N)}(P^{(N)}) \right | \\
&\leq C\sum_{j=1}^m\Big(\frac{|\alpha_j - \alpha_j^{(N)}| }{a^6} + \frac{|\beta_j - \beta_j^{(N)}| + |\lambda_j - \lambda_j^{(N)} |}{a^5} + \frac{|\mu_j - \mu_j^{(N)} |}{a^3} + \frac{|\delta_j - \delta_j^{(N)} |}{a^2} \Big).
\end{align}
In the parabolic case, $D_j^{(N)}$ is of degree greater than or equal to $8$, so we have
\begin{align}
&\quad \ \left | D_j^{(N)}(P) - D_j^{(N)}(P^{(N)}) \right | \\
&\leq C\sum_{j=1}^m\Big(\frac{|\alpha_j - \alpha_j^{(N)}| }{a^9} + \frac{|\beta_j - \beta_j^{(N)}| }{a^7} +\frac{|\lambda_j - \lambda_j^{(N)} |}{a^8}+ \frac{|\mu_j - \mu_j^{(N)} |}{a^4} + \frac{|\delta_j - \delta_j^{(N)} |}{a^2} \Big).
\end{align}
In both cases, we obtain
\begin{equation}
    |\dot{\delta_j} - \dot{\delta}^{(N)}_j | \leq Mod(t)+ \left | D_j^{(N)}(P) - D_j^{(N)}(P^{(N)}) \right | \le C t^{-\frac{9}{2}- \frac{N}{8}}.
\end{equation}
Therefore integrating as before gives $|\delta_j - \delta^{(N)}_j | \leq \f12 t^{-\frac{7}{2}- \frac{N}{8}}$. 

Directly from \eqref{eq parameters} we infer 
\begin{align}
& | \dot{\alpha_j} - \dot{\alpha}_j^{(N)}|  \leq Mod(t) + 2| \beta_j - \beta_j^{(N)}| + 4| \mu_j \alpha_j - \mu_j^{(N)} \alpha_j^{(N)}| \leq C t^{-1- \frac{N}{8}},\\   
&  | \dot{\mu_j} - \dot{\mu}_j^{(N)}|  \leq Mod(t) + 4 |\mu_j^2 - (\mu_j^{(N)})^2 | + | \lambda_j^4 \delta_j - (\lambda_j^{(N)})^4 \delta_j^{(N)}| \leq C t^{-\frac{7}{2}- \frac{N}{8}}.
\end{align} 
Hence integrating and taking $ N \gg1 $ large implies  again
\[
| \alpha_j(t) - \alpha_j^{(N)}(t)| \leq \f12  t^{- \frac{N}{8}},\;\;|\mu_j(t) - \mu_j^{(N)}(t)| \leq \f12 t^{-\frac{5}{2}- \frac{N}{8}}.
\]

Finally, for the phase $\gamma_j$ we collect the above estimates and infer
\begin{align}
|\dot{\gamma}_j - \dot{\gamma}_j^{(N)}| &\leq C\Big( |\lambda_j - \lambda_j^{(N)} | + |\beta_j - \beta_j^{(N)} | +a |\dot{\beta}_j - \dot{\beta}_j^{(N)} | + \frac{|\alpha_j - \alpha_j^{(N)}|}{a^2} \\
&\qquad \;\; + a |\mu_j \beta_j- \mu_j^{(N)} \beta_j^{(N)}|+ a^2 |\dot{\mu}_j- \dot{\mu}_j^{(N)}| \Big) + Mod(t)\\
&\leq   C t^{-1- \frac{N}{8}}.
\end{align}
Then we conclude by integrating the above inequality.

\subsection{Error estimate} \label{sec:mod-subsec2} Here we give a proof of the first line of the bootstrap estimate \eqref{eq bootstrap conclusion}.\\[5pt]
To begin with, as in \cite{KRM} and \cite{Wu}, we linearize the conserved energy around the approximation $ R$. Thus writing again $ u = R + \varepsilon$ we have
\begin{align}\label{energy-cons}
    2\mathcal{E}(u_0) =& \;\;2 \mathcal{E}(R) -2 \mathrm{Re} (\varepsilon, \overline{\Delta R - \phi_{|R|^2} R}) + \int |\nabla \varepsilon|^2\;\d x\\ \nonumber
    & \;\;+ \int \phi_{|R|^2} |\varepsilon|^2 \;\d x - \frac{1}{2\pi^2} \int |\nabla \phi_{\mathrm{Re}(\varepsilon \bar{R})}|^2 \; \d x + 2 \int \phi_{\mathrm{Re}(\varepsilon \bar{R})}|\varepsilon|^2\; \d x \\ \nonumber
    &\;\; - \frac{1}{8\pi^2} \int |\nabla \phi_{|\varepsilon|^2}|^2\; \d x.
\end{align}
For deriving \eqref{eq bootstrap conclusion} the conservation law \eqref{energy-cons} is not enough and therefore we add localizations of mass, momentum, their center and the variance along $ \alpha(t)$ to quadratic and higher order terms in \eqref{energy-cons}.\\[4pt]
We stress that localized mass and momentum also appear in \cite{Wu}, however the functional $\G_4$ and $\G_5$ below are new in the 4D problem \eqref{H} and necessary for the final error control. We use the following Lemma without proof.
\begin{Lemma}  \label{lem cutoff}
There exist $c,C>0$ and $\varphi_j \in C^{1,\infty} (\mathbb{R}_+ \times \mathbb{R}^4)$ for $1 \le j \le m$ such that
\begin{equation}\label{eq cutoff} \begin{gathered}
    0 \le \varphi_j(t,x) \le 1, \quad \sum_{j=1}^m \varphi_j(t,x) \equiv 1, \\ 
    |\partial_t \varphi_j|+ |\nabla \varphi_j| \le \frac{C}{a}, \quad |\partial_t \sqrt{\varphi_j}|+ |\nabla \sqrt{\varphi_j}| \le \frac{C}{a}, \\
    \varphi_j(t,x)= \left\{ \begin{aligned}
        &1, \quad |x-\alpha_j(t)| \le ca(t), \\
        &0, \quad |x-\alpha_k(t)| \le ca(t),\ k \neq j.
    \end{aligned} \right.
\end{gathered} \end{equation}
\end{Lemma}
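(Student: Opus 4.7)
The plan is to construct $\{\varphi_j\}$ as a smoothed partition of unity around the moving centers $\alpha_j(t)$ with transition scale $a(t)$. The decisive design constraint is the $\sqrt{\varphi_j}$ bound: since formally $\nabla \sqrt{\varphi} = \nabla\varphi/(2\sqrt{\varphi})$ blows up wherever $\varphi$ vanishes unless $\varphi$ vanishes quadratically, each $\varphi_j$ must be engineered as a sum of two squares divided by a uniformly positive normalizer. The key elementary fact I would exploit is that for Lipschitz $A,B$ on $\R_+ \times \R^4$ the map $\sqrt{A^2+B^2}$ is again Lipschitz with constant at most $\mathrm{Lip}(A) + \mathrm{Lip}(B)$, so all bounds reduce to Lipschitz estimates on the building blocks.

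Concretely, fix $\eta \in C^\infty(\R;[0,1])$ monotone with $\eta=1$ on $(-\infty,1/3]$ and $\eta=0$ on $[2/3,\infty)$, and for small $c_0 \in (0,1/2)$ set
\begin{equation*}
    \psi_j(t,x) := \eta\!\left(\frac{|x-\alpha_j(t)|}{c_0\, a(t)}\right), \qquad \psi_0 := 1 - \sum_{j=1}^m \psi_j.
\end{equation*}
Since $|\alpha_j-\alpha_k| \ge a$, the $\psi_j$-supports for $j \ge 1$ are pairwise disjoint, yielding a non-negative partition of unity with $\psi_j \equiv 1$ on $B_{c_0 a/3}(\alpha_j)$ and $\psi_0 \equiv 0$ on $\bigcup_j B_{c_0 a/3}(\alpha_j)$. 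In parallel, construct a smooth Voronoi-adapted partition $\{\omega_j\}_{j=1}^m$ of the form $\omega_j = \bar\omega_j^2/\sum_k \bar\omega_k^2$, where each $\bar\omega_j$ is obtained by composing a squared one-variable cutoff with the affine bisecting expressions $(\alpha_j-\alpha_k)\cdot\big(x - \tfrac12(\alpha_j+\alpha_k)\big)/a^2$; the $\bar\omega_j$ are Lipschitz in $(t,x)$ with constants $\lesssim 1/a$, and $\sum_k \bar\omega_k^2 \ge c > 0$ by a pigeonhole argument on the Voronoi cells. Finally set
\begin{equation*}
    \varphi_j := \frac{\psi_j^2 + \psi_0^2\, \omega_j}{\sum_{k=1}^m \psi_k^2 + \psi_0^2}, \qquad 1 \le j \le m.
\end{equation*}
The identity $\sum_j \varphi_j \equiv 1$, together with $\varphi_j \equiv 1$ on $B_{c_0 a/3}(\alpha_j)$ and $\varphi_j \equiv 0$ on $B_{c_0 a/3}(\alpha_k)$ for $k \neq j$, follow by direct inspection. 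The bounds $|\partial_t \psi_j| + |\nabla \psi_j| \lesssim 1/a$ come from the chain rule together with $|\dot\alpha_j| \lesssim 1$ and $|\dot a|/a \lesssim 1/a$, consequences of \eqref{param-est-hyp}--\eqref{param-est-par} applied to the ODE \eqref{eq parameters}. The bound on $\nabla \varphi_j$ then follows from the product and quotient rules, using the uniform lower bound $\sum_k \psi_k^2 + \psi_0^2 \ge 1/2$ (immediate by point-wise case analysis: either some $\psi_j = 1$, or one $\psi_j \in (0,1)$ with $\psi_0 = 1-\psi_j$, giving $\psi_j^2 + (1-\psi_j)^2 \ge 1/2$).

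The main obstacle is the bound on $\sqrt{\varphi_j}$. Writing it as
\begin{equation*}
    \sqrt{\varphi_j} = \frac{\sqrt{\psi_j^2 + (\psi_0 \sqrt{\omega_j})^2}}{\sqrt{\sum_k \psi_k^2 + \psi_0^2}},
\end{equation*}
the denominator is smooth and bounded below by $1/\sqrt{2}$, so attention focuses on the numerator. Crucially, the squared construction $\omega_j = \bar\omega_j^2/\sum_k \bar\omega_k^2$ makes $\sqrt{\omega_j} = |\bar\omega_j|/\sqrt{\sum_k \bar\omega_k^2}$ globally Lipschitz in $(t,x)$ with constant $\lesssim 1/a$, so $\psi_0 \sqrt{\omega_j}$ is Lipschitz at the same scale. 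Applying $\mathrm{Lip}(\sqrt{A^2+B^2}) \le \mathrm{Lip}(A) + \mathrm{Lip}(B)$ (verified component-wise via $|A|, |B| \le \sqrt{A^2+B^2}$) produces the Lipschitz bound on the numerator, and the full estimate on $\sqrt{\varphi_j}$ follows by the quotient rule. This is precisely where the squared structure is essential: a naive choice such as $\varphi_j = \psi_j/\sum_k \psi_k$ would satisfy the partition-of-unity constraint but yield $|\nabla \sqrt{\varphi_j}|$ blowing up on the zero set of $\psi_j$.
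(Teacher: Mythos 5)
The paper states this lemma ``without proof,'' so there is no in-paper argument to compare against; I am evaluating your construction on its own merits. The overall design --- building each $\varphi_j$ from sums of squares over a uniformly positive normalizer so that $\sqrt{\varphi_j}$ inherits Lipschitz regularity, together with the elementary estimate $\mathrm{Lip}(\sqrt{A^2+B^2}) \le \mathrm{Lip}(A)+\mathrm{Lip}(B)$ --- is the right idea, and the partition-of-unity identity, localization, and lower bound $\sum_k\psi_k^2 + \psi_0^2 \ge 1/2$ are all fine.

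There is, however, a genuine gap in the Voronoi-adapted block $\{\omega_j\}$. You claim the $\bar\omega_j$ are Lipschitz in $(t,x)$ with constant $\lesssim 1/a$, but this is false: writing $s_{jk}(t,x) = \alpha_{jk}(t)\cdot\big(x-\tfrac12(\alpha_j+\alpha_k)\big)/a^2$, the time derivative contains the term $\dot\alpha_{jk}\cdot\big(x-\tfrac12(\alpha_j+\alpha_k)\big)/a^2$. The transition-zone restriction $|s_{jk}|\lesssim 1$ only controls the component of $x-\tfrac12(\alpha_j+\alpha_k)$ \emph{along} $\alpha_{jk}$; the perpendicular component is unconstrained, and whenever $\dot\alpha_{jk}=2\beta_{jk}$ is not parallel to $\alpha_{jk}$ (the generic situation --- for a hyperbolic trajectory $\alpha_j=x_j+v_jt+o(1)$ this fails whenever $x_{jk}\not\parallel v_{jk}$), the term grows linearly in $|x|$. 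Hence $|\partial_t\omega_j|$, and with it $|\partial_t\varphi_j|$, is unbounded away from the soliton region. The bound \eqref{eq cutoff} really is needed globally: it enters the bootstrap through terms such as $\int \partial_t\varphi_j\,|\varepsilon|^2$ and $\int \nabla\varphi_j\,\mathrm{Im}(\nabla\varepsilon\,\overline{\varepsilon})$ with $\varepsilon\in H^1(\mathbb{R}^4)$ not compactly supported, so a failure at spatial infinity cannot be ignored.

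The good news is that the problematic piece is unnecessary. Simply distribute $\psi_0^2$ uniformly rather than via Voronoi cells:
\begin{equation*}
    \varphi_j := \frac{\psi_j^2 + \psi_0^2/m}{\sum_{k=1}^m \psi_k^2 + \psi_0^2}, \qquad 1\le j\le m.
\end{equation*}
The identity $\sum_j\varphi_j\equiv1$, the localization $\varphi_j\equiv 1$ near $\alpha_j$ and $\varphi_j\equiv0$ near $\alpha_k$ for $k\ne j$, the bound $0\le\varphi_j\le1$, and the lower bound on the denominator all hold exactly as before. Every building block is now one of the radial cutoffs $\psi_j$ (or $\psi_0=1-\sum\psi_j$), whose derivatives are globally $\lesssim 1/a$ because the transition zones $\{c_0a/3\le|x-\alpha_j|\le 2c_0a/3\}$ are compact shells: there $|\partial_t|x-\alpha_j||\le|\dot\alpha_j|$ and the factor $|x-\alpha_j|/a\sim1$ bound the chain-rule terms, with no uncontrolled growth in $x$. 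The $\sqrt{A^2+B^2}$ trick then gives the Lipschitz bound on $\sqrt{\varphi_j}$ with $A=\psi_j$, $B=\psi_0/\sqrt m$, and the quotient by the smooth, bounded-below denominator $\sqrt{\sum_k\psi_k^2+\psi_0^2}$ finishes the argument. With this replacement your proof goes through.
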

We note that the cut-off functions $\varphi_j$ localize the approximate solution $R$ in \eqref{eq approximate solution} of Section \ref{sec:approx} to $R_j$. To be precise we have using Lemma \ref{lem properties of admissible functions-2} for some $c> 0$
\begin{align}\label{est local varphi}
    \sup_{x \in \R^4}| \varphi_j(t)R(t) - R_j(t) | \leq C_N e^{- c a(t)},\;\; t \geq T_0.
\end{align}
Let us now define $\displaystyle \G(\varepsilon)= \sum_{k=1}^5\G_k(\varepsilon)$, where
\begin{align*}
    \G_1(\varepsilon)=&\; \int |\nabla \varepsilon|^2+ \int \phi_{|R|^2} |\varepsilon|^2- \frac{1}{2\pi^2}\int |\nabla \phi_{\mathrm{Re} (\varepsilon \overline{R}) }|^2+ 2\int \phi_{\mathrm{Re} (\varepsilon \overline{R}) } |\varepsilon|^2\\
    &\;\;- \frac{1}{8 \pi^2} \int |\nabla \phi_{|\varepsilon|^2} |^2,\\[3pt]
    \G_2(\varepsilon)=&\; \sum_{j=1}^m \Big( \lambda_j^2+ |\beta_j|^2 \Big) \int \varphi_j |\varepsilon|^2,\;\;\; \G_3(\varepsilon)=\; -2\sum_{j=1}^m \beta_j \int \varphi_j \mathrm{Im} (\nabla \varepsilon \overline{\varepsilon}),\\[3pt]
    \G_4(\varepsilon) =&\; 4 \sum_{j=1}^m \mu_j^2 \int \varphi_j  |x|^2|\varepsilon|^2, \;\;\; \G_5(\varepsilon) =\; 4 \sum_{j=1}^m \mu_j \beta_j \int \varphi_j x |\varepsilon|^2 - 4 \sum_{j=1}^m \mu_j \int \varphi_j \mathrm{Im} (x \nabla \varepsilon \overline{\varepsilon}).
\end{align*}

\;\\[4pt]
We proceed by proving coercivity for the functional $\G$, which relies on Lemma \ref{lem coercivity} in Section \ref{sec:gs-op-properties} combined with the orthogonality in \eqref{eq orthogonality} of Lemma \ref{lem orthogonality}. Then we combine the following Proposition \ref{prop coercivity} with an upper bound in Proposition \ref{prop estimate on G(epsilon)} assuming the bootstrap condition \eqref{eq bootstrap assumption}.
\begin{Prop} \label{prop coercivity}
Let $N \ge 2$. For $T_0=T_0(N)$ large enough, there exists $c_0>0$ such that
\begin{equation}
    \G(\varepsilon(t)) \ge c_0 \| \varepsilon(t) \|_{H^1}^2,
\end{equation}
for all $ t \in [T_*,T_n]$.
\end{Prop}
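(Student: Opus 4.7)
The plan is to follow the standard localize–demodulate–coerce strategy of Krieger--Martel--Rapha\"el, adapted to the nonlocal Hartree setting. The key idea is that, when localized near the $j$-th bubble and demodulated by the path $g_j$, the functional $\mathcal{G}(\varepsilon)$ reduces to the quadratic form of the ground state linearized operators; Lemma \ref{lem coercivity} together with the orthogonality of $\varepsilon$ from Lemma \ref{lem orthogonality} then yields coercivity, while all cross and boundary terms are absorbed by taking $T_0(N)$ large enough so that $a(t) \gg 1$.

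First I would introduce the decomposition $\varepsilon_j := \sqrt{\varphi_j}\,\varepsilon$. Using $\sum_j \varphi_j \equiv 1$ and $|\nabla \sqrt{\varphi_j}| \lesssim a^{-1}$, standard commutator and integration-by-parts arguments give
\begin{equation*}
    \int |\nabla \varepsilon|^2 \;=\; \sum_{j=1}^m \int |\nabla \varepsilon_j|^2 \;+\; O(a^{-2})\|\varepsilon\|_{L^2}^2,
\end{equation*}
and similarly $\int |\varepsilon|^2 = \sum_j \int |\varepsilon_j|^2$. The potential terms are handled by writing $R = \sum_k R_k$ and noting that, for $k \neq j$, $\phi_{|R_k|^2}$ and $\phi_{\mathrm{Re}(\varepsilon \overline{R_k})}$ evaluated on the support of $\varphi_j$ are $O(a^{-2})$, since the Newton potential of a compactly-mass function decays like $|x|^{-2}$ in $\mathbb{R}^4$; the off-diagonal pieces in $\int |\nabla \phi_{\mathrm{Re}(\varepsilon \overline{R})}|^2$ are controlled by the same mechanism together with Hardy--Littlewood--Sobolev. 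This reduces the problem to a sum of local functionals, one per bubble, modulo errors $O(a^{-2})\|\varepsilon\|_{H^1}^2$.

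Next, for each $j$ I would demodulate by setting $\tilde{\varepsilon}_j := g_j^{-1} \varepsilon_j$, i.e.\ undo the scaling, translation and phase. A direct computation shows that the conjugation of $-\Delta + \text{(local potential)}$ by $g_j$ produces exactly the terms $\lambda_j^2$, $|\beta_j|^2$, $-2\beta_j \cdot \mathrm{Im}(\nabla\varepsilon\overline\varepsilon)$, $4\mu_j^2 |x|^2$ and $4\mu_j\beta_j \cdot x$, $-4\mu_j\,\mathrm{Im}(x\nabla\varepsilon\overline\varepsilon)$ that were added in $\mathcal{G}_2, \mathcal{G}_3, \mathcal{G}_4, \mathcal{G}_5$. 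These are therefore not corrections but cancellations, and after collecting terms one obtains
\begin{equation*}
    \mathcal{G}(\varepsilon) \;=\; \sum_{j=1}^m \Big[(L_+ \tilde\varepsilon_{j,1}, \tilde\varepsilon_{j,1}) + (L_- \tilde\varepsilon_{j,2}, \tilde\varepsilon_{j,2})\Big] + \mathcal{R}(\varepsilon),
\end{equation*}
where $\tilde\varepsilon_j = \tilde\varepsilon_{j,1} + i\tilde\varepsilon_{j,2}$ and the remainder satisfies $|\mathcal{R}(\varepsilon)| \lesssim (a^{-2} + \|V_j^{(N)} - Q\|_{H^1} + \|\varepsilon\|_{H^1})\|\varepsilon\|_{H^1}^2$, using $V_j^{(N)} \to Q$ as $a \to \infty$ from Proposition \ref{prop construction of approximate bubbles} and the bootstrap $\|\varepsilon\|_{H^1} \le t^{-N/4} \ll 1$.

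Finally, I would check that the orthogonality conditions \eqref{eq orthogonality} translate, under the change of variables $y_j = \lambda_j(x-\alpha_j)$ and demodulation, into
\begin{equation*}
    \mathrm{Re}(\tilde\varepsilon_j, f) = 0 \ \text{for}\ f \in \{Q, xQ, |x|^2 Q\}, \quad \mathrm{Im}(\tilde\varepsilon_j, f) = 0 \ \text{for}\ f \in \{\Lambda Q, \nabla Q, \rho\},
\end{equation*}
up to errors of order $e^{-ca}\|\varepsilon\|_{L^2}$ coming from the cutoff $\sqrt{\varphi_j}$ (exact orthogonality holds for $g_j^{-1}\varepsilon$, and $Q, \rho$ etc.\ are exponentially localized). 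Lemma \ref{lem coercivity} then delivers $(L_+\tilde\varepsilon_{j,1}, \tilde\varepsilon_{j,1}) + (L_-\tilde\varepsilon_{j,2}, \tilde\varepsilon_{j,2}) \ge c\|\tilde\varepsilon_j\|_{H^1}^2 - Ce^{-ca}\|\varepsilon\|_{H^1}^2$; summing in $j$, using $\sum_j \|\tilde\varepsilon_j\|_{H^1}^2 \gtrsim \|\varepsilon\|_{H^1}^2 - O(a^{-2})\|\varepsilon\|_{L^2}^2$, and choosing $T_0(N)$ large to absorb all errors closes the estimate.

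The main obstacle is the bookkeeping in the second step: identifying all the quadratic terms that arise from the demodulation of $\int|\nabla\varepsilon|^2$, $\int\phi_{|R|^2}|\varepsilon|^2$ and $\int\phi_{\mathrm{Re}(\varepsilon\overline R)}\,\mathrm{Re}(\varepsilon\overline R)$, and verifying that the coefficients of $\mathcal{G}_2,\mathcal{G}_3,\mathcal{G}_4,\mathcal{G}_5$ are chosen so that precisely the lower-order, non-coercive terms cancel. The nonlocal potential requires extra care because the cross pieces $\int \phi_{\mathrm{Re}(\varepsilon \overline R_j)}\,\mathrm{Re}(\varepsilon \overline R_k)$ for $j \neq k$ must be shown to contribute only to the negligible remainder $\mathcal{R}(\varepsilon)$, which follows from HLS and the $|x|^{-2}$ decay of $\phi_{|R_k|^2}$ away from $\alpha_k$.
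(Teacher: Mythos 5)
Your proposal is correct and follows essentially the same route as the paper: localize via $\varepsilon_j = \sqrt{\varphi_j}\,\varepsilon$, demodulate to $\tilde\varepsilon_j = g_j^{-1}\varepsilon_j$, observe that the terms in $\mathcal{G}_2,\dots,\mathcal{G}_5$ are precisely what cancels the lower-order pieces generated by conjugating the kinetic and potential terms through $g_j$, and then invoke Lemma \ref{lem coercivity} together with the orthogonality conditions to get coercivity per bubble, absorbing cross-bubble, cutoff, and $V_j^{(N)}-Q$ errors by taking $T_0(N)$ large. The paper organizes the intermediate steps slightly differently (introducing explicit functionals $\H_j$ and $\H_{j,\varphi}$ to mediate the transition from the demodulated form to the truncated one), but the key ideas and estimates match; your explicit remark that the orthogonality of $\tilde\varepsilon_j$ only holds up to $O(e^{-ca})$ errors from the cutoff is a valid point that the paper handles implicitly via "for $T_0(N)$ large enough."
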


\begin{proof}
Let $\varepsilon_j= \varepsilon \sqrt{\varphi_j}$ and $\tilde{\varepsilon}_j= g_j^{-1} \varepsilon_j$, or more precisely, define
\begin{equation}
    \tilde{\varepsilon}_j(t,y_j)= \frac{1}{\lambda_j^2(t)} \varepsilon_j(t, \lambda_j^{-1}(t)y_j+ \alpha_j(t)) e^{- i(\gamma_j(t) + \beta_j(t) \cdot (\lambda_j^{-1}y_j+ \alpha_j(t)) + \mu_j |\lambda_j^{-1}y_j+ \alpha_j(t)|^2)}.
\end{equation}
For $T_0(N) \gg1$ large enough, by Lemma \ref{lem coercivity}, Lemma \ref{lem properties of admissible functions-2} and the orthogonality \eqref{eq orthogonality}, we have
\begin{equation}
    \Big( L_+ \mathrm{Re}(\tilde{\varepsilon}_j), \mathrm{Re}(\tilde{\varepsilon}_j) \Big)+ \Big( L_- \mathrm{Im}(\tilde{\varepsilon}_j), \mathrm{Im}(\tilde{\varepsilon}_j) \Big) \ge c \| \tilde{\varepsilon}_j \|_{H^1}^2, \quad \forall t\ge T_0.
\end{equation}
In the following, we always assume $T_0$ is large enough so that the above holds. Set $Q_j= g_j Q$, then we have 
\begin{equation} \begin{aligned}
    &\Big( L_+ \mathrm{Re}(\tilde{\varepsilon}_j), \mathrm{Re}(\tilde{\varepsilon}_j) \Big)+ \Big( L_- \mathrm{Im}(\tilde{\varepsilon}_j), \mathrm{Im}(\tilde{\varepsilon}_j) \Big) \\
    = &\ \int |\nabla \tilde{\varepsilon}_j|^2+ \int |\tilde{\varepsilon}_j|^2+ \int \phi_{Q^2} |\tilde{\varepsilon}_j|^2+ 2\int \phi_{\mathrm{Re}(Q \tilde{\varepsilon}_j)} \mathrm{Re}(Q \tilde{\varepsilon}_j) \\[4pt]
    = &\ \frac{1}{\lambda_j^2} \Bigg[ \int |\nabla \varepsilon_j|^2- 2\beta_j \mathrm{Im}\Big(\int  \nabla \varepsilon_j \overline{\varepsilon_j} \Big) + |\beta_j|^2 \int |\varepsilon_j|^2 - 4 \mu_j \mathrm{Im} \Big(\int \nabla \varepsilon_j x \overline{\varepsilon_j}\Big)\\
    &\;\;\;\;\;\;\;\;\; + 4 \beta_j \mu_j \cdot \int x |\varepsilon_j|^2 + 4 \mu_j^2 \int |x\varepsilon_j|^2  \Bigg]\\[4pt]
    &\;\;+  \int |\varepsilon_j|^2 + \frac{1}{\lambda_j^2} \int \phi_{Q_j^2} |\varepsilon_j|^2+  \frac{2}{\lambda_j^2} \int \phi_{\mathrm{Re}(\varepsilon_j \overline{Q_j})} \mathrm{Re} (\varepsilon \overline{Q_j}). 
\end{aligned} \end{equation}
Therefore we obtain 
\begin{equation} \begin{aligned}
    &\Big( L_+ \mathrm{Re}(\tilde{\varepsilon}_j), \mathrm{Re}(\tilde{\varepsilon}_j) \Big)+ \Big( L_- \mathrm{Im}(\tilde{\varepsilon}_j), \mathrm{Im}(\tilde{\varepsilon}_j) \Big) \\
    = &\ \frac{1}{\lambda_j^2} \Bigg( \int |\nabla \varepsilon_j|^2+ \int \phi_{|R_j|^2} |\varepsilon_j|^2 - \frac{1}{2\pi^2}\int \big| \nabla \phi_{\mathrm{Re}(\varepsilon_j \overline{{R_j}})} \big|^2  + \Big( \lambda_j^2+ |\beta_j|^2 \Big) \int |\varepsilon_j|^2 \Bigg)\\[4pt]
    &\;\; + \frac{2}{\lambda_j^2} \Bigg( 2 \mu_j \beta_j \cdot \int x |\varepsilon_j|^2 -  \beta_j  \mathrm{Im} \Big(\int \nabla \varepsilon_j \overline{\varepsilon_j} \Big) + 2 \mu_j^2 \int |x\varepsilon_j|^2 - 2\mu_j \mathrm{Im} \Big(\int x \nabla \varepsilon_j \overline{\varepsilon_j} \Big) \Bigg)\\
    &\;\;+ O_N \left( \frac{\| \varepsilon_j \|_{H^1}^2}{a} \right),
\end{aligned} \end{equation}
using $ |R_j(t) - Q_j(t)| \lesssim \frac{C}{a} e^{-c |x|}$ by Lemma \ref{lem properties of admissible functions-2}.
We then deduce that 
\begin{equation} \label{eq coercivity of H_j} 
    \H_j(\varepsilon_j) \ge c \| \varepsilon_j \|_{H^1}^2- \frac{C_N}{a} \| \varepsilon_j \|_{H^1}^2 \ge c\int \varphi_j ( |\nabla \varepsilon|^2+ |\varepsilon|^2)- \frac{C_N}{a} \| \varepsilon \|_{H^1}^2,
\end{equation}
where we set
\begin{equation} \begin{aligned}
    \H_j(\varepsilon_j) = &\ \int |\nabla \varepsilon_j|^2+ \int \phi_{|R_j|^2} |\varepsilon_j|^2- \frac{1}{2\pi^2}\int \big| \nabla \phi_{\mathrm{Re}(\varepsilon_j \overline{{R_j}})} \big|^2 \\[4pt]
    &\ + \Big( \lambda_j^2+ |\beta_j|^2 \Big) \int |\varepsilon_j|^2 - 4 \mu_j \int \mathrm{Im} (x \nabla \varepsilon_j \overline{\varepsilon_j}) - 2\beta_j \int \mathrm{Im} (\nabla \varepsilon_j \overline{\varepsilon_j})\\[4pt]
    &\ + 4 \mu_j \beta_j \int x |\varepsilon_j|^2 + 4 \mu_j^2 \int |x|^2 |\varepsilon_j|^2.
\end{aligned} \end{equation}
\noindent
\;\\
Next, we consider the truncated functional
\begin{equation} \begin{aligned}
    \H_{j,\varphi}(\varepsilon) = &\ \int \varphi_j|\nabla \varepsilon|^2+ \int \phi_{|R_j|^2} |\varepsilon|^2- \frac{1}{2\pi^2}\int \big| \nabla \phi_{\mathrm{Re}(\varepsilon \overline{{R_j}})} \big|^2 \\
    &\ + \Big( \lambda_j^2+ |\beta_j|^2 \Big) \int \varphi_j |\varepsilon|^2 - 4 \mu_j \int \varphi_j \mathrm{Im}  (x \nabla \varepsilon \overline{\varepsilon}) - 2\beta_j \int \varphi_j  \mathrm{Im} (\nabla \varepsilon \overline{\varepsilon})\\
    &\ + 4 \mu_j \beta_j \int \varphi_j x |\varepsilon|^2 + 4 \mu_j^2 \int \varphi_j |x|^2 |\varepsilon|^2.
\end{aligned} \end{equation}
We then have using \eqref{eq cutoff} and \eqref{est local varphi}
\begin{equation} \label{eq coercivity of H_j,phi} \begin{aligned}
    \H_{j,\varphi}(\varepsilon)= &\int |\nabla \varepsilon_j|^2+ \int \phi_{|R_j|^2} |\varepsilon|^2- \frac{1}{2\pi^2}\int \big| \nabla \phi_{\mathrm{Re}(\varepsilon \overline{{R_j}})} \big|^2 \\
    &\ + \Big( \lambda_j^2+ |\beta_j|^2 \Big) \int  |\varepsilon_j|^2 - 4 \mu_j \int \mathrm{Im}  (x \nabla \varepsilon_j \overline{\varepsilon_j}) - 2\beta_j \int  \mathrm{Im} (\nabla \varepsilon_j \overline{\varepsilon_j})\\
    &\ + 4 \mu_j \beta_j \int  x |\varepsilon_j|^2 + 4 \mu_j^2 \int  |x|^2 |\varepsilon_j|^2 + O \left( \frac{\| \varepsilon \|_{H^1}^2}{a} \right) \\
    = &\ \H_j(\varepsilon_j)+ \int (1- \varphi_j) \phi_{|R_j|^2} |\varepsilon|^2- \frac{1}{2\pi^2}\int \big| \nabla \phi_{\mathrm{Re} \big( (1-\sqrt{\varphi_j}) \varepsilon \overline{R_j} \big)} \big|^2 \\
    &\ + 2 \int \phi_{\mathrm{Re}( \varepsilon_j \overline{R_j} )} \mathrm{Re} \big( (1-\sqrt{\varphi_j}) \varepsilon \overline{R_j}  \big) + 2 \int \phi_{\mathrm{Re}( (1- \sqrt{\varphi_j})\varepsilon \overline{R_j}  )} \mathrm{Re} \big(  \varepsilon \overline{R_j}  \big) + O \left( \frac{\| \varepsilon \|_{H^1}^2}{a} \right) \\
    \ge &\ c\int \varphi_j ( |\nabla \varepsilon|^2+ |\varepsilon|^2)- \frac{C_N}{a} \| \varepsilon \|_{H^1}^2.
\end{aligned} \end{equation}

Finally, using the fact the $\varphi_j$'s in Lemma \ref{lem cutoff} form a partition of unity, we write
\begin{equation} \begin{aligned}
    \G(\varepsilon)= &\ \sum_{j=1}^m \H_{j,\varphi}(\varepsilon)+ 2\int \phi_{\mathrm{Re} (\varepsilon \overline{R})} |\varepsilon|^2- \frac{1}{8\pi^2} \int |\nabla \phi_{|\varepsilon|^2}|^2 \\
    &\ + \sum_{j \neq k} \int \phi_{\mathrm{Re} (R_k \overline{R_j})} |\varepsilon|^2- \frac{1}{2\pi^2}\sum_{j \neq k} \int \nabla \phi_{\mathrm{Re} (\varepsilon \overline{R_j})} \cdot \nabla \phi_{\mathrm{Re} (\varepsilon \overline{R_k})}. 
\end{aligned} \end{equation}
The first term is controlled by \eqref{eq coercivity of H_j,phi}, whereas the other terms in the first line are $O \big( t^{-N/4} \| \varepsilon \|_{H^1}^2 \big)$ by Proposition \ref{prop accuracy of approximate solution} and the bootstrap assumption \eqref{eq bootstrap assumption}. The two terms in the second line are $O_N \big( e^{-ca}\| \varepsilon \|_{H^1}^2 \big)$ and $O_N \Big( \frac{\| \varepsilon \|_{H^1}^2}{a} \Big)$, respectively. 
\;\\
We thus have
\begin{equation}
    \G(\varepsilon) \ge c \| \varepsilon \|_{H^1}^2- \frac{C_N}{a} \| \varepsilon \|_{H^1}^2
\end{equation}
and hence conclude the claim by taking $T_0(N) \gg1 $ large enough.
\end{proof}

\begin{Prop} \label{prop estimate on G(epsilon)}
Let us assume \eqref{eq bootstrap assumption} in Proposition \ref{prop bootstrap}. For $ N \geq 2$ and $T_0=T_0(N) \gg1$ large enough, there exists $C, C_N >0$ such that
 \begin{align}\label{G(epsilon) estimate}
     & |\G(\varepsilon(t))|   \leq \frac{C}{N} t^{- \frac{N}{2}} + C_N t^{- \frac{3N}{4}},
 \end{align}
for all $t \in [T_*,T_n]$.
\end{Prop}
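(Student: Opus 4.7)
The plan is to exploit $\varepsilon(T_n)=0$ from \eqref{eq value of epsilon at T_n}, which forces $\G(\varepsilon(T_n))=0$, and integrate backward in time:
\[
\G(\varepsilon(t)) \;=\; -\int_t^{T_n}\tfrac{d}{ds}\G(\varepsilon(s))\,ds.
\]
The whole argument then reduces to an integrable pointwise upper bound on $\tfrac{d}{dt}\G(\varepsilon)$. By construction (following \cite{KRM,Wu}), $\G$ is the quadratic-in-$\varepsilon$ remainder of a combined linearization around $R$ of the conserved Hartree energy together with localized mass, linear momentum and variance-type quantities, weighted respectively by $\lambda_j^2+|\beta_j|^2$, $\beta_j$, $\mu_j^2$, $\mu_j\beta_j$ and $\mu_j$. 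These weights are chosen so that the leading $O(\|\varepsilon\|_{H^1}^2)$ part of $\tfrac{d}{dt}\G$ cancels exactly once one inserts the conservation laws for $u=R+\varepsilon$ and the modulation equations \eqref{eq parameters}.

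Concretely, for $\G_1$ I would use $\mathcal{E}(u)\equiv\mathcal{E}(u_0)$ to rewrite $\G_1(\varepsilon)=2\mathcal{E}(u_0)-2\mathcal{E}(R)-2\re(\varepsilon,\overline{\Delta R-\phi_{|R|^2}R})$, so that $\tfrac{d}{dt}\G_1$ reduces to $-2\tfrac{d}{dt}\mathcal{E}(R)$ (a pure modulation/interaction error via \eqref{eq equation of approximate solution}) plus a pairing against $\varepsilon$ evaluated using \eqref{eq equation of epsilon}. For $\G_2,\ldots,\G_5$ I would use the analogous identities $\int\varphi_j|\varepsilon|^2=\int\varphi_j|u|^2-\int\varphi_j|R|^2-2\re\int\varphi_j R\bar\varepsilon$ and their momentum and variance counterparts, turning each $\G_k$ into a localized conservation law for $u$ (producing only cutoff errors via $\partial_t\varphi_j$ and $\nabla\varphi_j$) minus a modulation error in $R$ plus a linear-in-$\varepsilon$ term that recombines with pieces from $\G_1$. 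Substituting $\dot\lambda_j,\dot\beta_j,\dot\mu_j,\dot\delta_j$ from \eqref{eq parameters} and invoking the Pohozaev-type identities for $Q$ from Section \ref{sec:traje}, all bulk $O(\|\varepsilon\|_{H^1}^2)$ contributions cancel.

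What remains decomposes into four sources: (i) the residual $\Psi^{(N)}$, giving a contribution bounded by $\|\varepsilon\|_{H^1}a^{-N-2}$ via Proposition \ref{prop accuracy of approximate solution}; (ii) the modulation error $S_j$, yielding a contribution bounded by $Mod(t)\|\varepsilon\|_{H^1}\lesssim \|\varepsilon\|_{H^1}^2 a^{-2}+C_N\|\varepsilon\|_{H^1}a^{-N-2}$ via Proposition \ref{prop modulation estimate}; (iii) the cubic nonlinearity $\cN(\varepsilon)$ from \eqref{eq nonlin epsilon}, of size $\|\varepsilon\|_{H^1}^3$ via Hardy--Littlewood--Sobolev and Sobolev; and (iv) cutoff terms with $\partial_t\varphi_j,\nabla\varphi_j=O(a^{-1})$, supported where $|R_j|\lesssim e^{-ca}$, so that any pairing with $R$ gains arbitrarily fast decay in $a$. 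Collected, these give
\[
\Bigl|\tfrac{d}{ds}\G(\varepsilon(s))\Bigr| \;\lesssim\; \frac{\|\varepsilon(s)\|_{H^1}^2}{a(s)^2} + \frac{\|\varepsilon(s)\|_{H^1}}{a(s)^{N+2}} + \|\varepsilon(s)\|_{H^1}^3.
\]
Inserting $\|\varepsilon(s)\|_{H^1}\le s^{-N/4}$ from \eqref{eq bootstrap assumption} together with $a(s)\gtrsim s^{1/2}$, valid in both the hyperbolic and parabolic regimes, this bound becomes $\lesssim s^{-N/2-1}+C_N s^{-3N/4-1}+s^{-3N/4}$, and integrating from $t$ to $T_n$ and using $\int_t^\infty s^{-N/2-1}\,ds=\tfrac{2}{N}t^{-N/2}$ yields precisely $\tfrac{C}{N}t^{-N/2}+C_Nt^{-3N/4}$.

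The main obstacle is the algebraic bookkeeping of the nonlocal cross-bubble terms $\phi_{\re(\varepsilon\bar R_j)}R_k$ with $j\neq k$, which are of natural size $\|\varepsilon\|_{H^1}a^{-2}$ and appear simultaneously in $\tfrac{d}{dt}\G_1$ via the $\varepsilon$-equation and in the differentiated localized densities of $u$ through the Hartree convolution; termwise, these do not obviously cancel. Exposing the cancellation requires the Taylor expansion of $|\alpha_{jk}+\cdot|^{-2}$ from Section \ref{sec:approx} combined with the partition $\{\varphi_j\}$ from Lemma \ref{lem cutoff}, mirroring the interaction accounting of Proposition \ref{prop accuracy of approximate solution}. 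This is the only place where the nonlocal character of the Hartree nonlinearity genuinely enters; once handled, the rest of the argument is structurally parallel to the one carried out in \cite[Section 6]{Wu}.
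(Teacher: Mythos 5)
Your high-level strategy matches the paper's: use $\varepsilon(T_n)=0$ so $\G(\varepsilon(T_n))=0$, then integrate $\tfrac{d}{dt}\G$ backward, exploiting the cancellation between $\G_1$ (energy) and the weighted localized mass/momentum functionals $\G_2,\G_3$ to kill the raw $O(\|\varepsilon\|_{H^1}^2)$ terms and leave $O(\|\varepsilon\|_{H^1}^2/a^2)$ plus $\Psi$, $S_j$, $\cN(\varepsilon)$ errors. This is the same cancellation the paper exhibits in \eqref{eq estimate of G_1}--\eqref{eq estimate of G_123}, and your route through conserved quantities of $u$ (rather than differentiating $\G$ directly) is an acceptable variant. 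However, there are two concrete gaps in your plan.

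\emph{The variance pieces $\G_4,\G_5$.} Your claimed bound $|\tfrac{d}{ds}\G(\varepsilon)|\lesssim\|\varepsilon\|_{H^1}^2/a^2+\|\varepsilon\|_{H^1}/a^{N+2}+\|\varepsilon\|_{H^1}^3$ has no term in $\|x\varepsilon\|_{L^2}$, yet $\G_4=4\sum_j\mu_j^2\int\varphi_j|x|^2|\varepsilon|^2$ and $\G_5$ are genuinely of this flavor. Worse, if one tries to differentiate $\G_4$ in time as part of the backward-integration scheme, the term $\mu_j^2\int\partial_t\varphi_j\,|x|^2|\varepsilon|^2$ is not controlled by $\|\varepsilon\|_{H^1}$ at all, and the cutoff offers no gain because $|x|^2\varepsilon$ is not localized near the bubbles. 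This is why the paper does \emph{not} differentiate $\G_4$: it first bounds $\tfrac{d}{dt}\int|x|^2|\varepsilon|^2$ directly (using the $\varepsilon$-equation, Proposition \ref{prop accuracy of approximate solution}, and HLS/Sobolev), integrates to get the improved bound \eqref{eq estimate for x epsilon} for $\int|x|^2|\varepsilon|^2$, and only then multiplies by $\mu_j^2\lesssim t^{-4}$; for $\G_5$ it uses Cauchy--Schwarz against $\|x\varepsilon\|_{L^2}\|\varepsilon\|_{H^1}$. Both steps rely essentially on the second bootstrap inequality $\|x\varepsilon\|_{L^2}\le t^{-N/4+2}$, which your plan never invokes. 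Without a separate treatment of $\G_4,\G_5$ along these lines the estimate cannot close.

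\emph{Pure-$\varepsilon$ cutoff errors in the parabolic case.} You argue cutoff terms are harmless because ``any pairing with $R$ gains arbitrarily fast decay in $a$,'' but the offending cutoff terms in $\tfrac{d}{dt}\G_2,\tfrac{d}{dt}\G_3$ are \emph{not} paired with $R$: they are $\sum_j(\lambda_j^2+|\beta_j|^2)\int\bigl(\partial_t\varphi_j|\varepsilon|^2+2\nabla\varphi_j\cdot\mathrm{Im}(\nabla\varepsilon\bar\varepsilon)\bigr)$ and its $\G_3$-analogue. In the hyperbolic case $|\partial_t\varphi_j|+|\nabla\varphi_j|\lesssim a^{-1}\sim t^{-1}$ and these are $O(t^{-1}\|\varepsilon\|_{H^1}^2)$, which integrates to $O(N^{-1}t^{-N/2})$ as wanted. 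But in the parabolic case $a\sim t^{1/2}$ gives only $O(t^{-1/2}\|\varepsilon\|_{H^1}^2)$, which integrates to $t^{1/2-N/2}$ and is too large. The paper closes this with \eqref{eq G_2 cutoff error}--\eqref{eq G_3 cutoff error}: the $\G_3$-term carries an extra $\beta_j\lesssim t^{-1/2}$, and for $\G_2$ one uses $\sum_j\varphi_j\equiv 1$ (so $\sum_j\partial_t\varphi_j=0$) combined with the hypothesis of Theorem \ref{thm parabolic} that the $\lambda_j^\infty$ are identical, to kill the $\lambda_j^2\int\partial_t\varphi_j|\varepsilon|^2$ contribution. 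Your proposal does not notice this subtlety, and without it the parabolic case does not close.
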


\begin{proof}
(1) Using integration by parts, we have
\begin{equation} \begin{aligned}
    \frac{\d \G_1}{\dt}(\varepsilon) = &\ -2\mathrm{Im} \int i\partial_t \overline{\varepsilon} \left( \Delta \varepsilon- \phi_{|R|^2} \varepsilon- 2\phi_{\mathrm{Re} (\varepsilon \overline{R})} R- \mathcal{N}(\varepsilon) \right) \\
    &\ +4\mathrm{Re} \int \phi_{\mathrm{Re} (\varepsilon \overline{R})} \varepsilon \partial_t \overline{R}+ 2\int \phi_{\mathrm{Re} (\partial_t R \overline{R})} |\varepsilon|^2+ 2\mathrm{Re} \int \phi_{|\varepsilon|^2} \varepsilon \partial_t \overline{R}.
\end{aligned} \end{equation}
Here, by \eqref{eq equation of epsilon}, Proposition \ref{prop accuracy of approximate solution}, the control of $S_j$ via 
\begin{align} \label{est control S_j}
    |S_j(t,x)| \leq C Mod(t) e^{- c_N |x - \alpha_j|},\;\; x \in \R^4,\;\; t\geq T_0
\end{align}
for which we use Lemma \ref{lem properties of admissible functions-2} and $T_0(N) \gg1 $ large, and finally \eqref{some estimates} for $\mathcal{N}(\varepsilon)$, the first line is of order 
$$O \big( Mod \| \varepsilon \|_{H^1} \big)+ O_N \left( \frac{1}{a^{N+2}} \| \varepsilon \|_{H^1} \right).$$
For the second line, we use the fact that choosing $T_0 = T_0(N) \gg1$ large enough, we have 
\begin{align}
    & |V_j^{(N)}(y_j)| \leq C e^{- c|y_j|} + C_N a^{-1}e^{- c_N|y_j|},\;\;|R_{g,j}^{(N)}(x)| \leq C e^{- c|x - \alpha_j|} + C_N a^{-1} e^{- c|x - \alpha_j|}, \\[3pt]
    & |M_j^{(N)}| + |B_j^{(N)}| + |D_j^{(N)}| \leq  C a^{-2} + C_N a^{-3}.
\end{align}
Then recalling \eqref{param-est-hyp}, \eqref{param-est-par}  and taking the derivative of the modulation as in \eqref{eq time deriv}, we calculate using $R_j = g_j V_j$
\begin{equation} \begin{aligned}
    \partial_t R_j &= \lambda_j^2 \left( - \lambda_j \dot{\alpha}_j \cdot \nabla V_j +  i(\dot{\gamma}_j+ \dot{\beta}_j \cdot \alpha_j + \dot{\mu}_j |\alpha_j|^2) V_j(y_j) \right) e^{i\gamma_j+ i\beta \cdot x + \mu_j |x|^2}\\
    &\;\;\;+ O_N \left( \frac{1}{a^3} \right) e^{-c_N|x-\alpha_j|} \\[3pt]
    &= -2(\beta_j + 2\mu_j \alpha_j)  \cdot \nabla R_j+ i\big( \lambda_j^2 + |\beta_j|^2 + 4 \mu_j \beta_j \cdot \alpha_j + 4 \mu_j^2 |\alpha_j|^2 \big) R_j\\
    &\;\;\;\;+ \left( O_N \Big(\frac{1}{a^3}\Big)+ O(Mod) \right) e^{-c_N|x-\alpha_j|}\\
     &= -2\beta_j \cdot \nabla R_j+ i\big( \lambda_j^2 + |\beta_j|^2 \big) R_j + \left( O_N \Big(\frac{1}{a^3}\Big) +  O\Big(Mod + \frac{C}{t}\Big) \right) e^{-c_N|x-\alpha_j|}.
\end{aligned} \end{equation}

Hence combining the latter with \eqref{eq modulation estimate} in Proposition \ref{prop modulation estimate}, we infer
\begin{align} \label{eq estimate of G_1} 
    \frac{\d \G_1}{\dt}(\varepsilon) = \sum_{j=1}^m \Bigg( 4\Big( & \lambda_j^2 + |\beta_j|^2 \Big) \int \phi_{\mathrm{Re}( \varepsilon \overline{R})} \mathrm{Im} (\varepsilon \overline{R_j}) - 8\int \phi_{\mathrm{Re} (\varepsilon \overline{R})} \mathrm{Re} (\varepsilon \beta_j \cdot \nabla \overline{R_j})\\ \nonumber
    &- 4\int \phi_{\mathrm{Re} (\beta_j \cdot \nabla R_j \overline{R_j})} |\varepsilon|^2 \Bigg)+ O \left( \frac{\| \varepsilon \|_{H^1}^2} {t} \right)+ O_N \left( \frac{\| \varepsilon \|_{H^1}}{a^{N+2}}+ \| \varepsilon \|_{H^1}^3 \right) ,
\end{align} 
where we use Hardy-Littlewood-Sobolev's estimate and in particular, considering $R_j, R_k$ for $j \neq k$, the distance estimates of \cite[Lemma 2.3]{Wu} which equally hold in $d =4$ dimensions.\\[4pt]
(2) (For the hyperbolic case) By \eqref{param-est-hyp} and \eqref{eq cutoff}, we have
\begin{equation}
    \frac{\d \G_2}{\dt}(\varepsilon)= \sum_{j=1}^m 2\Big( \lambda_j^2+ |\beta_j|^2 \Big) \int \varphi_j \mathrm{Im} (i\partial_t \varepsilon \overline{\varepsilon})+ O\left( \frac{\| \varepsilon \|_{H^1}^2}{t} \right). 
\end{equation}
Thus using \eqref{eq equation of epsilon}, \eqref{some estimates}, the control of $S_j$ in \eqref{est control S_j} and \eqref{param-est-hyp} we conclude
\begin{equation} \begin{aligned}
    \frac{\d \G_2}{\dt}(\varepsilon) &= \sum_{j=1}^m 2\Big( \lambda_j^2+ |\beta_j|^2 \Big) \int \varphi_j \mathrm{Im} (2\phi_{\mathrm{Re}(\varepsilon \overline{R})} R \overline{\varepsilon})+ O\left( \frac{\| \varepsilon \|_{H^1}^2}{t} \right) \\
    &\quad + O_N \left( \frac{\| \varepsilon \|_{H^1}}{a^{N+1}}+ \| \varepsilon \|_{H^1}^3 \right) + O( Mod \| \varepsilon \|_{H^1}).
\end{aligned} \end{equation}
Therefore using the modulation estimate \eqref{eq modulation estimate} and the localization property of $\varphi_j$ in \eqref{est local varphi}, we further infer 
\begin{equation} \label{eq estimate of G_2} \begin{aligned}
    \frac{\d \G_2}{\dt}(\varepsilon) &= \sum_{j=1}^m- 4\Big( \lambda_j^2 + |\beta_j|^2 \Big) \int \phi_{\mathrm{Re}(\varepsilon \overline{R})} \mathrm{Im} (\varepsilon \overline{R_j})+ O\left( \frac{\| \varepsilon \|_{H^1}^2}{t} \right) \\
    &\quad + O_N \left( \frac{\| \varepsilon \|_{H^1}}{a^{N+2}}+ \| \varepsilon \|_{H^1}^3 \right).
\end{aligned} \end{equation} 
\;\\
(3) (For the hyperbolic case) Similarly, we can compute
\begin{equation} \label{eq estimate of G_3} \begin{aligned}
    \frac{\d \G_3}{\dt} &= \sum_{j=1}^m -4\beta_j \int \varphi_j \mathrm{Re}(i\partial_t \varepsilon \nabla \overline{\varepsilon})+ O\left( \frac{\| \varepsilon \|_{H^1}^2}{t} \right) \\
    &= \sum_{j=1}^m -4\beta_j \int \varphi_j \mathrm{Re} \left( 2\phi_{\mathrm{Re} (\varepsilon \overline{R})} R \nabla \overline{\varepsilon}+ \phi_{|R|^2} \varepsilon \nabla \overline{\varepsilon} \right)+ O\left( \frac{\| \varepsilon \|_{H^1}^2}{t} \right) \\
    &\quad + O_N \left( \frac{\| \varepsilon \|_{H^1}}{a^{N+2}}+ Mod \| \varepsilon \|_{H^1}+ \| \varepsilon \|_{H^1}^3 \right) \\
    &= \sum_{j=1}^m \left( 8\int \phi_{\mathrm{Re} (\varepsilon \overline{R})} \mathrm{Re}(\varepsilon \beta_j \varphi_j \cdot \nabla \overline{R})+ 4\int \phi_{\mathrm{Re} (\beta_j \varphi_j \cdot \nabla R \overline{R})} |\varepsilon|^2 \right) \\
    &\quad +O\left( \frac{\| \varepsilon \|_{H^1}^2}{t} \right)+ O_N \left( \frac{\| \varepsilon \|_{H^1}}{a^{N+2}}+ Mod \| \varepsilon \|_{H^1}+ \| \varepsilon \|_{H^1}^3 \right) \\
    &= \sum_{j=1}^m \left( 8\int \phi_{\mathrm{Re} (\varepsilon \overline{R})} \mathrm{Re}(\varepsilon \beta_j \cdot \nabla \overline{R_j})+ 4\int \phi_{\mathrm{Re} (\beta_j \cdot \nabla R_j \overline{R_j})} |\varepsilon|^2 \right) \\
    &\quad +O\left( \frac{\| \varepsilon \|_{H^1}^2}{t} \right)+ O_N \left( \frac{\| \varepsilon \|_{H^1}}{a^{N+2}}+ \frac{\| \varepsilon \|_{H^1}^2} {a^2}+ \| \varepsilon \|_{H^1}^3 \right).
\end{aligned} \end{equation}

Combining \eqref{eq estimate of G_1}, \eqref{eq estimate of G_2}, \eqref{eq estimate of G_3} together, we deduce
\begin{equation}
    \left| \frac{\d}{\dt} (\G_1+ \G_2+ \G_3)(\varepsilon(t)) \right| \le \frac{C\| \varepsilon \|_{H^1}^2}{t}+ C_N \left( \frac{\| \varepsilon \|_{H^1}}{a^{N+2}}+ \| \varepsilon \|_{H^1}^3 \right).
\end{equation}
Then integrating and using \eqref{eq bootstrap assumption}, we have 
\begin{align} \label{eq estimate of G_123}
    |\G_1(\varepsilon(t))+ \G_2(\varepsilon(t))+ \G_3(\varepsilon(t))| \le& \int_t^{T_n} C \tau^{- \frac{N}{2} -1} + C_N \tau^{-\frac{3N}{4} - 1}\; \d \tau\\ \nonumber
    \le& \frac{C}{N} t^{- \frac{N}{2}} + C_N t^{-\frac{3N}{4}}.
\end{align}

(4) We have, using \eqref{eq equation of epsilon} and \eqref{eq nonlin epsilon},
\begin{align}
    \frac{\d}{\d t} \Big(\int |x|^2 |\varepsilon |^2 \Big) =&\;\; 4 \mathrm{Im} \Big(\int x \nabla \varepsilon\overline{\varepsilon} \Big) + 4 \mathrm{Im} \Big(\int |x|^2 \phi_{\mathrm{Re}(\varepsilon \overline{R})} R \overline{\varepsilon}\Big)+ 2 \mathrm{Im} \Big(\int |x|^2 \Psi \overline{\varepsilon} \Big) ,\\
    &\;\;+ 2 \mathrm{Im} \Big(\int |x|^2 \phi_{|\varepsilon|^2} R \overline{\varepsilon} \Big) + 2\sum_{j =1}^m  \mathrm{Im} \Big(\int |x|^2 S_j e^{i(\gamma_j + \beta_j \cdot x + \mu_j |x|^2)} \overline{\varepsilon} \Big).
\end{align}
Clearly we have 
 \begin{align}
 &\left| \mathrm{Im} \Big(\int x \nabla \varepsilon\overline{\varepsilon} \Big)\right| \leq  \| x \varepsilon\|_{L^2} \| \varepsilon \|_{H^1} \leq t^{-\frac{N}{2}+2},\\
&\left | \mathrm{Im} \Big(\int |x|^2 \Psi \overline{\varepsilon} \Big)\right | \leq \|x \varepsilon \|_{L^2_x} \| \Psi x \|_{L^2_x} \leq \frac{C}{t^{\frac{N}{2}}} \|x \varepsilon \|_{L^2_x} \leq C t^{-\frac{3N}{4}+2},
\end{align}  
where in the second estimate we use Proposition \ref{prop accuracy of approximate solution}.
For the second and the fourth terms we infer from Sobolev's and Hardy-Littlewood-Sobolev's inequality
\begin{align} \label{2}
&\left |  \mathrm{Im} \Big(\int |x|^2 \phi_{\mathrm{Re}(\varepsilon \overline{R})} R \overline{\varepsilon}\Big)\right |\\
\leq&\; \Bigg(\left \| \int \frac{x-y}{|x-y|^2} \mathrm{Re}(\varepsilon \bar{R})(y) \d y\right \|_{L^8_x}  + \left \| \int |x - y|^{-2} \mathrm{Re}(\varepsilon \bar{R})(y) y \d y\right \|_{L^8_x}\Bigg)  \|x R \|_{L^{\frac{8}{5}}_x}  \| \varepsilon \|_{H^1} \\
\leq&\;\; C \|\varepsilon \|_{H^1}^2 \Big(\| R\|_{L^{\frac{8}{5}}}  + \| xR\|_{L^{8}}\Big)\| xR\|_{L^{\frac{8}{5}}}  \leq C t^2 \|\varepsilon \|_{H^1}^2 \le C t^{-\frac{N}{2}+2},
\end{align}
and 
\begin{align}
\left | \mathrm{Im} \Big(\int |x|^2 \phi_{|\varepsilon|^2} R \overline{\varepsilon} \Big)\right |\leq&\; \left \| \int |x - y|^{-2} |\varepsilon|^2(y)  \d y\right \|_{L^4_x} \| x R \|_{L^4_x} \| x \varepsilon \|_{L^2_x}\\
\leq&\;\; C t \|\varepsilon \|_{L^4}^2  \| x \varepsilon \|_{L^2_x} \leq C t^{- \frac{3N}{4}+3}.
\end{align}  
Finally for the last integral we have from \eqref{eq definition of S_j^N} and \eqref{eq modulation estimate} in Proposition \ref{prop modulation estimate} 
 \begin{align}
\left |\mathrm{Im} \Big(\int |x|^2 S_j e^{i(\gamma_j + \beta_j \cdot x + \mu_j |x|^2)} \overline{\varepsilon} \Big)\right | \leq&  \sum_{j = 1}^m \| |x|S_j \|_{L^2_x} \| x \varepsilon \|_{L^2_x}\\
\leq&\;\; C t  \|x \varepsilon \|_{L^2}\Big( t^{-1} \| \varepsilon \|_{H^1} + C_N( t^{- \frac{N}{2} -1}  + \|\varepsilon \|_{H^1}^2) \Big)\\
\leq&\;\; Ct^{- \frac{N}{2} +2} + C_N t^{- \frac{3N}{4} +3}.
\end{align}  
Then collecting these we get
\begin{equation} 
\left| \frac{\d}{\d t} \Big(\int |x|^2 |\varepsilon |^2 \Big) \right| \leq  C t^{- \frac{N}{2} +2} + C_N t^{- \frac{3N}{4} +3},
\end{equation}
and thus
\begin{equation} \label{eq estimate for x epsilon}
    \int |x|^2 |\varepsilon |^2 \leq  \frac{C}{N} t^{- \frac{N}{2} +3 } + C_N t^{- \frac{3N}{4} +4}.
\end{equation}
Therefore, using $ |\mu(t)| \lesssim t^{-2}$, we infer
\begin{align} \label{eq estimate of G_4}
   | \G_4(\varepsilon(t))|  \leq \frac{C}{N} t^{- \frac{N}{2}} + C_N t^{- \frac{3N}{4}}.
\end{align}

(5) By Cauchy-Schwarz and $ |\mu(t)| \lesssim t^{-2}$, 
\begin{equation} \label{eq estimate of G_5}
    |\G_5(\varepsilon(t))| \leq C t^{-2}\|x \varepsilon \|_{L^2_x} \| \varepsilon\|_{H^1} \leq t^{-2} \frac{C}{N} t^{- \frac{N}{4}+ 2}  t^{- \frac{N}{4}} \leq  \frac{C}{N} t^{- \frac{N}{2}}.
\end{equation}
\;\\
Combining \eqref{eq estimate of G_123}, \eqref{eq estimate of G_4} and \eqref{eq estimate of G_5} together, the proof for the hyperbolic case is finished.\\[4pt]

In the parabolic case, we additionally need to show
\begin{equation} \label{eq G_2 cutoff error}
    \sum_{j=1}^m \Big( \lambda_j^2+ |\beta_j|^2 \Big) \int \Big( \partial_t \varphi_j |\varepsilon|^2+ 2\nabla \varphi_j \mathrm{Im}(\nabla \varepsilon \overline{\varepsilon}) \Big)= O\left( \frac{\| \varepsilon \|_{H^1}^2}{t} \right)
\end{equation}
and
\begin{equation} \label{eq G_3 cutoff error} \begin{aligned}
    \sum_{j=1}^m \beta_j \int \bigg( \nabla \varphi_j \Big( 2|\nabla \varepsilon|^2+ 2\phi_{\mathrm{Re} (\varepsilon \overline{R})} \mathrm{Re} (& \varepsilon \overline{R}) + \phi_{|R|^2} |\varepsilon|^2 \Big) \\
    &+ \partial_t \varphi_j \mathrm{Im} (\nabla \varepsilon \overline{\varepsilon}) \bigg) =  O\left( \frac{\| \varepsilon \|_{H^1}^2}{t} \right).  
\end{aligned} \end{equation}
We know $|\partial_t \varphi_j|+ |\nabla \varphi_j|= O(t^{-1/2})$. The second estimate then holds because $|\beta_j| \le t^{-1/2}$. Further we need the assumption that all $\lambda_j's$ are equal, which then in particular implies the first estimate since $\sum \varphi_j=1$. 
\end{proof}

\begin{proof}[Proof of \eqref{eq bootstrap conclusion}] It remains to check the first line. Note that \eqref{eq estimate for x epsilon} implies the bound for $\| x\varepsilon \|_{L^2}$. Then combining \eqref{G(epsilon) estimate} in Proposition \ref{prop estimate on G(epsilon)} and the coercivity of $\G(\varepsilon)$ in Proposition \ref{prop coercivity}, we take $ N $ and $T_0 = T_0(N)$ subsequently large to conclude the claim.
\end{proof}

\appendix
\section{Auxiliary details for the non-degeneracy}\label{appendix:Nondegeneracy}

\begin{Lemma}
    The coefficients $\Gamma_{\el}(t)$ defined in \eqref{eqn:GammaCoefficients} form a sequence of positive and decreasing functions in the sense that
    \[ \Gamma_{\el}(t) > \Gamma_{\el+1}(t) > 0, \quad \forall \el \geq 0 . \]
\end{Lemma}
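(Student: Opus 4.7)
The idea is to reduce the claim to classical properties of the Legendre functions of the second kind $Q_{\el}$ on the interval $z>1$, for which both positivity and strict monotonicity in the degree are transparent from a Laplace-type integral representation.

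First I would identify $\Gamma_{\el}(t)$ in closed form. The Neumann expansion
\[
\frac{1}{z - \eta} \;=\; \sum_{\el \geq 0} (2\el+1)\, Q_{\el}(z)\, P_{\el}(\eta), \qquad z \in \C \setminus [-1,1],
\]
together with orthogonality of the Legendre polynomials yields $\int_{-1}^{1} \frac{P_{\el}(\eta)}{z-\eta}\, d\eta = 2\, Q_{\el}(z)$. For $t \in (0,1)$ the parameter $z = \frac{1+t^2}{2t}$ lies in $(1,\infty)$, and using $\frac{1}{1+t^2-2t\eta} = \frac{1}{2t\,(z-\eta)}$ the definition \eqref{eqn:GammaCoefficients} becomes
\[
\Gamma_{\el}(t) \;=\; \frac{2\el+1}{2(\el+1)^2\, t}\, Q_{\el}\!\left(\frac{1+t^2}{2t}\right).
\]

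Next I would invoke the classical Laplace representation, valid on $z > 1$,
\[
Q_{\el}(z) \;=\; \int_0^{\infty} \bigl(z + \sqrt{z^2-1}\cosh\phi\bigr)^{-\el-1}\, d\phi.
\]
Since $z>1$ forces $z + \sqrt{z^2-1}\cosh\phi > 1$ for every $\phi\geq 0$, the integrand is positive and, pointwise in $\phi$, strictly decreasing in $\el$. Consequently $Q_{\el}(z) > Q_{\el+1}(z) > 0$ for every fixed $z > 1$.

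Finally, the prefactor $c_{\el} := \frac{2\el+1}{(\el+1)^2}$ is positive and strictly decreasing on $\el \geq 0$; an elementary algebraic computation gives $c_{\el} - c_{\el+1} = \frac{2\el^2 + 4\el + 1}{(\el+1)^2(\el+2)^2} > 0$. Multiplying two strictly positive, strictly decreasing sequences then yields $\Gamma_{\el}(t) > \Gamma_{\el+1}(t) > 0$ on $(0,1)$, which is the claim. The main (modest) obstacle is the closed-form identification; once that step is in place the rest is an immediate consequence of standard identities for $Q_{\el}$.
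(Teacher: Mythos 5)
Your proof is correct, and it takes a genuinely different (though closely related) route from the paper's. The paper applies Rodrigues' formula for $P_{\el}$ and integrates by parts $\el$ times to arrive at the representation
\[
\Gamma_{\el}(t)=\frac{2\el+1}{2(\el+1)^2}\cdot\frac{1}{t}\int_{-1}^{1}\frac{(1-\eta^2)^{\el}}{2^{\el+1}\bigl(\tfrac{1+t^2}{2t}-\eta\bigr)^{\el+1}}\,d\eta,
\]
which is, in effect, the Schl\"afli-type integral for $Q_{\el}$; it then reads off $\Gamma_{\el}>\Gamma_{\el+1}>0$ from the facts that the integrand's $\el$-dependent factor lies in $(0,1)$ and $\tfrac{2\el+1}{2(\el+1)^2}$ decreases. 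You instead invoke Neumann's formula to identify $\Gamma_{\el}(t)=\tfrac{2\el+1}{2(\el+1)^2\,t}\,Q_{\el}\bigl(\tfrac{1+t^2}{2t}\bigr)$ directly, then use the Heine (Laplace-type) representation $Q_{\el}(z)=\int_0^\infty (z+\sqrt{z^2-1}\cosh\phi)^{-\el-1}\,d\phi$ on $z>1$, where positivity and strict monotonicity in $\el$ are immediate since the base exceeds $1$. Both proofs thus boil down to "positive prefactor decreasing in $\el$" times "an integral of a positive integrand that decreases pointwise in $\el$"; yours reaches that structure by citing two named classical identities rather than deriving the integral representation by hand, which is slightly faster but depends on knowing the Heine formula. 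One small point worth making explicit in your write-up: for $t\in(0,1)$ one indeed has $z=\tfrac{1+t^2}{2t}>1$ (since $(1-t)^2>0$), which is what places $z$ in the domain where the Heine representation holds and forces $z+\sqrt{z^2-1}\cosh\phi>1$ for all $\phi\ge 0$.
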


\begin{proof}
    Using Rodrigues' formula for Legendre Polynomials,
    \begin{equation} \begin{aligned}
        \Gamma_n(t) &= \frac{2n+1}{2(n+1)^2} \int_{-1}^1 \frac{1}{1+t^2-2t\eta} \frac{1}{2^n n!} \frac{\d^n}{\d \eta^n} (\eta^2-1)^n \d \eta \\
        &= \frac{(-1)^n (2n+1)}{2^{n+1} (n+1)^2 n!} \int_{-1}^1 \left( \frac{\d^n}{\d \eta^n} \frac{1}{1+t^2-2t\eta} \right) (\eta^2-1)^n \d \eta \\
        &= \frac{2n+1}{2 (n+1)^2} \cdot \frac{1}{t} \int_{-1}^1 \frac{(1-\eta^2)^n}{2^{n+1}(\frac{1+t^2}{2t}- \eta)^{n+1}} \d \eta.
    \end{aligned} \end{equation}
    Notice that the function inside the last integral is nonnegative, and
    \[ 0 \leq \frac{1-\eta^2}{2 \left( \frac{1+t^2}{2t}-\eta \right)} \leq 1, \]
    and since the coefficient
    \[ \frac{2n+1}{2(n+1)^2} \] 
    describes a decreasing function of $n$, we conclude that $0 < \Gamma_{\el+1}(t) < \Gamma_{\el}(t)$.
\end{proof}

We complete this appendix by providing a proof of \eqref{eqn:HeatKernel}.

\begin{proof}[Proof of \eqref{eqn:HeatKernel}]
    We begin by decomposing $e^{a t u}$ in the basis of Legendre polynomials, where $a > 0$, $t>0$ and $u \in [-1,1]$. This is,
    \[ e^{atu} = \sum_{\el = 0}^{\infty} C_{\el, a}(t) P_{\el}(u), \quad C_{\el,a}(t) = \int_{-1}^1 e^{atu} P_{\el}(u) \frac{2\el+1}{2} \d u .
    \]
    We may compute, using Rodrigues' formula, to find
    \begin{align*}
        C_{\el, a}(t) &= \frac{2\el+1}{2^{\el}\el!} (at)^{\el} \int_{-1}^1 e^{atu}(1-u^2)^{\el} \d u = \frac{2\el+1}{2^{\el}\el!} (at)^{\el} \int_{-1}^1 \cosh(iatu)(1-u^2)^{\el} \d u \\
        &= \frac{2\el+1}{2^{\el}\el!} (at)^{\el} \frac{\pi^{1/2} \Gamma(\el+1)}{2\left(\frac{1}{2}iat\right)^{\el+1/2}} J_{\el+1/2}(iat) = \frac{2\el+1}{2}i_{\el}(at),
    \end{align*}
    where $J_{n}$ is the Bessel function of the first kind of order $n$, and $i_{n}(z)$ is the modified spherical Bessel function of the first kind of order $n$. Setting $u= \frac{x}{\abs{x}} \cdot \frac{y}{\abs{y}}$ and $t=\abs{x}\abs{y}$ for $x,y \in \R^4$, we find, by using the addition theorem for spherical harmonics, that
    \[ e^{a (x \cdot y)} = \sum_{\el=0}^{\infty} \frac{2\el+1}{2} \cdot \frac{\omega_3}{(\el+1)^2} i_{\el}(a \abs{x}\abs{y}) \sum_{j=1}^{(\el+1)^2} S_{\el j} \left( \frac{x}{\abs{x}} \right) S_{\el j} \left( \frac{y}{\abs{y}} \right) .\]
    We conclude by combining this with the expression for the heat kernel of the Laplacian $\Delta$ on $\R^4$,
    \begin{equation}
        e^{t\Delta}(x,y) = \frac{1}{(4\pi t)^2} e^{-\frac{\abs{x-y}^2}{4t}} .
    \end{equation}
\end{proof}

\section{Proof of Propositions \ref{hyp} and \ref{parab}} \label{sec:furtherdetailsmbod}
Let $ m   \in \Z_{\geq 2} $ and $ \alpha = (\alpha_1, \alpha_2, \dots, \alpha_m) \in \R^{ 4m} $ be the position configuration of $ m $ bodies with center $ \alpha_j \in \R^4$  and masses $m_j \in \R_+$ in euclidean space.   We define our \emph{$m$-body law} to be the Newtonian system 
	
\begin{align} \label{m-bd}
	\left\{ \begin{aligned}
		\dot{\alpha}_j(t) &=  \beta_j(t),\; 
		\displaystyle \\
        \dot{\beta}_j(t) &= -\sum_{ k \neq j } m_k \frac{\alpha_j - \alpha_k}{|\alpha_j - \alpha_k|^4},
	\end{aligned} \right. \quad 1 \leq  j \leq m.
\end{align}
    
	The system \eqref{m-bd} has a  first integral
	\[
	H(\alpha(t), \beta(t))= \frac{1}{2} \sum_{j =1}^m m_j |\beta_j(t)|^2 - \f12 \sum_{j < k} \frac{ m_j m_k}{|\alpha_j - \alpha_k|^2},
	\] 
	i.e. $ H = K  - U$ with \emph{kinetic} and \emph{potential energy}
	\begin{align*}
		K(\beta) = \frac{1}{2} \sum_{j=1}^m  m_j |\beta_j(t)|^2,\;\; \;U(\alpha) =  \f12 \sum_{j < k} \frac{m_j m_k}{|\alpha_j - \alpha_k|^2}.
	\end{align*}
\;\\
The purpose of this section is to give proofs of Proposition \ref{hyp} and \ref{parab} in Section \ref{sec:n-body-intro}, which of course equally apply to the above $m$-body problem \eqref{m-bd} with general masses. 
    
\begin{proof}[Proof of Proposition \ref{hyp}] \

(1) Since $(\alpha, \beta)$ is a hyperbolic solution, we have $|\dot{\beta}(t)| \lesssim t^{-3}$, so $\lim_{t \to +\infty} \beta(t)$ exists. Let the limit be $\frac{v}{2}$. Then $|\beta(t)- \frac{v}{2}| \lesssim t^{-2}$, which implies $|\frac{\d}{\dt} (\alpha(t)- vt)| \lesssim t^{-2}$. We then deduce that $\lim_{t \to +\infty} (\alpha(t)- vt)$ exists. Let the limit be $x$. Then we have
\begin{equation}
    \alpha(t)= x+ vt + o(1),\;\;\beta(t)=  \frac{v}{2}+ o(1),\;\;\text{as}\;\; t \to \infty.
\end{equation}
If we assume $\alpha(t) \in \mathcal{X}$, then we have $x,v \in \mathcal{X}$. We further have $v \in \mathcal{Y}$ by hyperbolicity.
    
(2) We use a fixed point argument similar to the proof of Proposition \ref{prop hyperbolic trajectory}. Let
\begin{equation}
	X= \bigg\{ (\alpha, \beta) \in C^1 \big( [T_0, \infty), \R^{4m} \times \R^{4m} \big) \ \big| \ \big\| (\alpha- vt, 2\beta - v) \big\| \le 1 \bigg\},
\end{equation}
where we set
\begin{equation}
	\|  (\alpha, \beta) \| = \sup_{t \ge T_0} \sum_{j=1}^m \big( t^{\frac{1}{3}} |\alpha_j(t)|+ t^{\frac{5}{3}}|\beta_j(t)| \big),\;\; T_0 \gg1.
\end{equation}
Define the map $\Gamma: X \to X,\; \Gamma = (\Gamma^{(1)}, \Gamma^{(2)})$ by
	\begin{align}
		&(\Gamma^{(1)} \begin{psmallmatrix}
			\alpha\\[1pt] \beta
		\end{psmallmatrix})_j(t) := v_jt- \int_t^\infty (2\beta_j(\tau)-  v_j) \;d \tau,\\
		&(\Gamma^{(2)}\begin{psmallmatrix}
			\alpha\\[1pt] \beta
		\end{psmallmatrix})_j(t) := \frac{v_j}{2}+ \| Q \|_{L^2}^2 \int_t^\infty \sum_{k \neq j} \frac{\alpha_{jk}(\tau)}{|\alpha_{jk}(\tau)|^4} \;d \tau.
	\end{align}
We can directly check that when $T_0$ is large enough, $\Gamma$ is a contraction map from $X$ to $X$. Then the fixed point is the desired solution.
\end{proof}

\begin{Rem}
Part (2) also follows in a general context, i.e., Theorem 1.4 and Corollary 1.5 in \cite{Liu-Yan-Zhou}. The expansion in Proposition \ref{hyp} is then calculated from the M\o{}ller transform \footnote{viewed as a classical analogue of the wave operators of quantum Hamiltonians}  and symplectic intertwining
\begin{align*}
	\Omega =  \lim_{ t \to \infty} \Phi_{-t} \circ \Phi_{0,t},\;\;\; \Omega \circ  \Phi_{0,t} = \Phi_t \circ \Omega,
\end{align*}
where $ \Phi_t$ is the flow map and $ \Phi_{0,t}$  the free Galilean flow on a suitable space of escape orbits, see  Theorem 4.2 in \cite[Section 4.1]{FKM}.    A simple modification then implies Proposition \ref{hyp}. 
\end{Rem}

\begin{proof}[Proof of Proposition \ref{parab}] \

Let us consider homothetic solutions. First from the conditional minimality of $b$, using the method of Lagrange multiplier, there exists $c \in \R$ such that 
\begin{equation} \label{eq vector equation}
    c b_j = \|Q\|_{L^2}^2 \sum_{k \neq j} \frac{b_j- b_k}{|b_j- b_k|^4}, \quad \forall 1 \le j \le m.
\end{equation}
Multiplying both sides of \eqref{eq vector equation} by $b_j$ and summing over $j$, we deduce that $ c = 2 U(b)>0$.

We make the ansatz $\alpha(t) = f(t) b$ and $\beta(t)= \f12 \dot{\alpha}(t)$. Then \eqref{m-body} holds if and only if
\begin{equation}
    f''(t) =  -\frac{2c}{f^3(t)}.
\end{equation}
We see that for any $\eta \ge 0$, $f(t)= \big( \sqrt{8c} \ t+ \eta \big)^{\f12}$ solves the equation. Thus, all $(\alpha(t), \beta(t))$ given in Proposition \ref{parab} are solutions to \eqref{m-body}.
\end{proof}

	\vspace{3cm}
	
   
   

	\bibliographystyle{alpha}

	\vspace{1cm}


\end{document}